\documentclass[a4paper,12pt]{article}

\usepackage{amsmath,amssymb,amsthm,amsfonts,graphicx,mathrsfs,cancel,tikz,genyoungtabtikz,enumitem,hyperref,mathtools,nccmath,needspace}
\usepackage[a4paper,margin=1in]{geometry}

\DeclareMathOperator{\std}{Std}
\DeclareMathOperator{\End}{End}
\DeclareMathOperator{\Char}{char}
\DeclareMathOperator{\Dom}{Dom}
\newcommand{\Psichaind}[2]{\Psi\hspace{-4pt}\underset{\scriptscriptstyle{#2}}{\overset{\scriptscriptstyle{#1}}{\downarrow}}}
\newcommand{\Psichainu}[2]{\Psi\hspace{-4pt}\underset{\scriptscriptstyle{#1}}{\overset{\scriptscriptstyle{#2}}{\uparrow}}}
\newcommand{\ten}{10}
\newcommand{\eleven}{11}
\newcommand{\twelve}{12}
\newcommand{\thirteen}{13}
\allowdisplaybreaks
\def\Item{\item\abovedisplayskip=0pt\abovedisplayshortskip=0pt~\vspace*{-\baselineskip}}

\addtolength{\textwidth}{0cm}
\addtolength{\hoffset}{0cm}

\date{}
\title{Decomposable Specht modules for the Iwahori--Hecke algebra $\mathscr{H}_{\mathbb{F},-1}(\mathfrak{S}_n)$}
\author{Liron Speyer\\\normalsize Queen Mary University of London, Mile End Road, London E1 4NS, UK\\\texttt{\normalsize l.speyer@qmul.ac.uk}}

\newtheorem{lem}{Lemma}[section]
\newtheorem{thm}[lem]{Theorem}
\newtheorem{cor}[lem]{Corollary}
\newtheorem{prop}[lem]{Proposition}
\theoremstyle{definition}
\newtheorem{defn}[lem]{Definition}
\newtheorem*{eg}{Example}
\theoremstyle{remark}
\newtheorem*{ack}{Acknowledgements}
\newtheorem*{rem}{Remark}
\newtheorem*{notn}{Notation}

\begin{document}
\maketitle
\begin{center}
2000 Mathematics subject classification: 20C08, 20C30
\end{center}
\begin{abstract}
Let $S_\lambda$ denote the Specht module defined by Dipper and James for the Iwahori--Hecke algebra $\mathscr{H}_n$ of the symmetric group $\mathfrak{S}_n$. When $e=2$ we determine the decomposability of all Specht modules corresponding to hook partitions $(a,1^b)$. We do so by utilising the Brundan--Kleshchev isomorphism between $\mathscr{H}$ and a Khovanov--Lauda--Rouquier algebra and working with the relevant KLR algebra, using the set-up of Kleshchev--Mathas--Ram. When $n$ is even, we easily arrive at the conclusion that $S_\lambda$ is indecomposable. When $n$ is odd, we find an endomorphism of $S_\lambda$ and use it to obtain a generalised eigenspace decomposition of $S_\lambda$.
\end{abstract}

\section{Introduction}
Let $\mathfrak{S}_n$ denote the symmetric group on $n$ letters. The representation theory of $\mathfrak{S}_n$ largely centres around studying a certain family of $\mathbb{F}\mathfrak{S}_n$-modules $S_\lambda$, called Specht modules, which are indexed by partitions $\lambda$ of $n$. When the characteristic of $\mathbb{F}$ is zero, this family gives a complete set of pairwise non-isomorphic $\mathbb{F}\mathfrak{S}_n$-modules. The modern standpoint on the theory of Specht modules has been developed by James and can be found in \cite{j}.

The Iwahori--Hecke algebra $\mathscr{H}_{\mathbb{F},q}(\mathfrak{S}_n)$ of $\mathfrak{S}_n$ is a deformation of the group algebra $\mathbb{F}\mathfrak{S}_n$. One motivation for its study is its representation theory -- it provides a bridge between the representation theory of the symmetric and general linear groups. Here we consider it interesting in its own right.

In the series of papers \cite{dj1}, \cite{dj2} and \cite{dj3}, Dipper and James laid the foundations for and built up the theory of the Iwahori--Hecke algebra from the ground to a point where much was understood about its representation theory, at least when the quantum characteristic (denoted by $e$) is not $2$. $\mathscr{H}_{\mathbb{F},q}$ has a theory of Specht modules analogous to that for $\mathbb{F}\mathfrak{S}_n$, which describes the classical situation as a special case. Combinatorics of partitions and tableaux play an important role here, as in the classical case. Interestingly, results that hold for $\mathbb{F}\mathfrak{S}_n$ in characteristic $p$ can often be restated for $\mathscr{H}_{\mathbb{F},q}$ at quantum characteristic $p$. An example of this is the fact that Specht modules for $\mathbb{F}\mathfrak{S}_n$ are indecomposable when $\Char\mathbb{F}\neq2$ and Specht modules for $\mathscr{H}_{\mathbb{F},q}$ are indecomposable when $e\neq2$ (that is, $q\neq-1$).

When $e=2$, we would like to fill in some gaps; in particular we would like to know which Specht modules are decomposable.

In \cite{bk1}, Brundan and Kleshchev showed that, remarkably, $\mathscr{H}_{\mathbb{F},q}$ is isomorphic to a certain Khovanov--Lauda--Rouquier algebra. Immediately, this leads to a non-trivial $\mathbb{Z}$-grading on $\mathscr{H}_{\mathbb{F},q}$ and therefore $\mathbb{F}\mathfrak{S}_n$. Working from the KLR perspective, a whole theory of graded Specht modules and graded homomorphisms can be built up. One recent development has been made in \cite{kmr}, where (in an even more general setting) the authors give a presentation of graded Specht modules by generators and relations. They construct a homogeneous basis for $S_\lambda$ indexed by the standard $\lambda$-tableaux.

In this paper, we will use the KLR machinery to work with $\mathscr{H}_{\mathbb{F},q}$. We will determine the decomposability of Specht modules for hook partitions at $e=2$, building on Murphy's work on the special case $q=1$ in \cite{gm}.

In section 2, we will outline the necessary preliminaries and state some established results which we will call upon.

In section 3, we look at Specht modules for hook partitions. We begin some preliminary work towards constructing endomorphisms of these Specht modules.

In section 4, we use the results of section 3 to recover a result regarding the endomorphism algebras of Specht modules for hook partitions when $n$ is even, and thus determine their indecomposability.

In the crucial section 5, we examine the actions of the KLR generators on a special subset of the homogeneous basis for Specht modules when $n$ is odd.

Finally, in section 6, we construct an endomorphism $f$ of Specht modules $S_\lambda$, for $n$ odd. We calculate some eigenvalues of $f$ and thus determine when $S_\lambda$ can be decomposed into a direct sum of non-trivial generalised eigenspaces. In the case $\lambda=(a,1^2)$, it turns out some extra work is needed to arrive at our final result. This is achieved reasonably easily in light of the work done to establish KLR generator actions on the homogeneous basis.\\

\begin{ack}
This paper was written under the supervision of Matthew Fayers at Queen Mary University of London. The author would like to thank Dr Fayers for his helpful comments and advice throughout the project. The author must also thank Queen Mary University of London for their financial support, which has enabled this work to take place.
\end{ack}

\section{KLR algebras and their Specht modules}
\subsection{The Iwahori--Hecke Algebra}
\begin{defn}
We define the \emph{Iwahori--Hecke algebra} $\mathscr{H}=\mathscr{H}_{\mathbb{F},q} (\mathfrak{S}_n)$ of the symmetric group $\mathfrak{S}_n$ to be the unital associative $\mathbb{F}$-algebra with presentation

\[\left\langle T_1, \dots, T_{n-1} \\ \left| \begin{matrix}
(T_i-q)(T_i+1)=0\text{ for }i=1,2,\dots,n-1\\
T_iT_j=T_jT_i\text{ for }1\leqslant i<j-1\leqslant n-2\\
T_iT_{i+1}T_i=T_{i+1}T_iT_{i+1}\text{ for }i=1,2,\dots,n-2
\end{matrix}\right.\right\rangle\]
\end{defn}

\begin{defn}
Define $e\in\{2,3,4,\dots\}$ to be the smallest integer such that $1+q+q^2+\dots+q^{e-1}=0$. If no such integer exists, we define $e=0$. We call $e$ the \emph{quantum characteristic} of $\mathscr{H}$.
\end{defn}

When $e=2$ (that is $q=-1$), $\mathscr{H}$ is isomorphic by \cite{bk1} to a Khovanov--Lauda--Rouquier (KLR) algebra with the following presentation:

\vspace{12pt}

\emph{\textbf{Generators}} \quad $y_1, \dots, y_n, \qquad \psi_1, \dots, \psi_{n-1}, \qquad e(i) \quad  \text{for all } i \in \{0,1\}^n$

\vspace{12pt}

\emph{\textbf{Relations}} \quad \begin{align*} y_1 &= 0\\
e(i) &= 0 &\text{if } i_1 = 1\\
e(i) e(j) &= \delta_{ij} e(i)\\
\sum_i e(i) &= 1\\
y_r e(i) &= e(i) y_r\\
\psi_r e(i) &= e(s_r \cdot i) \psi_r \qquad \qquad \quad \ &\text{where $s_r \cdot i$ means $i_1\dots i_{r-1} i_{r+1} i_r i_{r+2}\dots i_n$}
\end{align*}
\begin{align*} y_s y_r &= y_r y_s\\
y_s \psi_r &= \psi_r y_s &\text{if } s \neq r, r+1\\
y_r \psi_r e(i) &= \psi_r y_{r+1} e(i) &\text{if } i_r \neq i_{r+1}\\
y_r \psi_r e(i) &= (\psi_r y_{r+1} - 1) e(i) &\text{if } i_r = i_{r+1}\\
y_{r+1} \psi_r e(i) &= \psi_r y_r e(i) &\text{if } i_r \neq i_{r+1}\\
y_{r+1} \psi_r e(i) &= (\psi_r y_r + 1) e(i) &\text{if } i_r = i_{r+1}\\
&\vspace{24pt}\\
\psi_s \psi_r &= \psi_r \psi_s &\text{if } s \neq r \pm 1\\
\psi_r^2 e(i) &= 0 &\text{if } i_r = i_{r+1}\\
\psi_r^2 e(i) &= -(y_r - y_{r+1})^2 e(i) &\text{if } i_r \neq i_{r+1}\\
\psi_r \psi_{r+1} \psi_r e(i) &= \psi_{r+1} \psi_r \psi_{r+1} e(i) &\text{if $i_r = i_{r+1}$ or $i_{r+1} = i_{r+2}$}\\
\psi_r \psi_{r+1} \psi_r e(i) &= (\psi_{r+1} \psi_r \psi_{r+1} + y_r - 2y_{r+1} + y_{r+2}) e(i)&\text{if } i_r \neq i_{r+1} \neq i_{r+2}.
\end{align*}

\subsection{Specht Modules}

The Specht modules are an important family of $\mathscr{H}$-modules which are indexed by partitions of $n$. They are central in the representation theory of $\mathscr{H}$, and have their counterparts in the KLR world. Thanks to the Brundan--Kleshchev isomorphism, we will be treating $\mathscr{H}$ as a KLR algebra and as such would like to look at the Specht modules in this setting. We will use a presentation given in \cite[Definition 7.11]{kmr}. Note that in \cite{kmr}, the authors call this the column Specht module.

\begin{defn}
Let $\lambda=(\lambda_1,\lambda_2,\dots)$ be a partition of $n$. Then we define the \emph{Young diagram} of $\lambda$ to be \[[\lambda]=\{(i,j) \ | \ j\leqslant\lambda_i\}\subset \mathbb{N}^2.\] We define a \emph{node} of $\lambda$ to be an element of $[\lambda]$. Finally, we define a \emph{$\lambda$-tableau} to be the diagram $[\lambda]$ with nodes replaced by the numbers $1,2,\dots,n$ with no multiplicities.
\end{defn}

\begin{notn}
We write $\lambda\vdash n$ to mean $\lambda$ is a partition of $n$. We denote by $\lambda'$ the partition conjugate to $\lambda$. That is, the partition obtained by interchanging rows and columns of the Young diagram of $\lambda$. A tableau is said to be \emph{standard} if entries increase along the rows and down the columns. We denote the set of standard $\lambda$-tableaux by $\std(\lambda)$. If $T$ is a $\lambda$-tableau, we write $j\downarrow_T j+1$ to mean that $j$ and $j+1$ are in the same column of $T$ and that $j+1$ is in a lower row than $j$ in $T$.
\end{notn}

\begin{defn}
Let $\lambda\vdash n$ and $T\in\std(\lambda)$. Assign each node $(i,j)$ of $\lambda$ a \emph{residue} $j-i\pmod{e}$. Denote by $i_k=i_k(T)$ the residue of the node in $T$ occupied by $k$. Then we define the residue sequence of $T$ as $i_T=(i_1,i_2,\dots,i_n)$.
\end{defn}

\begin{defn}
Denote by \emph{$T_\lambda$} the standard $\lambda$-tableau with entries written consecutively down the columns. We call $T_\lambda$ the initial tableau. Denote the residue sequence of $T_\lambda$ by \emph{$i_\lambda$}.
\end{defn}
\begin{eg}
Let $\lambda=(3,2,1)$ and $e=2$. Then $T_\lambda=\young(146,25,3)$ and $i_\lambda=(010100)$.
\end{eg}

We now introduce the Specht modules. Since we will only be interested in those Specht modules indexed by hook partitions, we can give a fairly simple looking presentation. For general partitions, the Garnir relations are much more complicated.

\begin{defn}
Given $\lambda\vdash n$, the Specht module $S_\lambda$ is the module generated by $z_\lambda$ subject to the following relations:
\begin{enumerate}
\item $y_kz_\lambda=0$ for all $k$;
\item $e(i)z_\lambda=\delta_{i,i_\lambda}z_\lambda$ for all $i\in\{0,1\}^n$;
\item $\psi_jz_\lambda=0$ for all $j=1,\dots,n-1$ such that $j\downarrow_{T_\lambda}j+1$;
\item The homogeneous Garnir relations defined in \cite[Definition 7.11]{kmr}.
\end{enumerate}
Unless there is possible confusion, we shall write $z=z_\lambda$ for the sake of tidiness.
\end{defn}
\begin{rem}
Suppose we are looking at a hook partition $\lambda=(a,1^b)$. Then relation 3 can be rewritten $\psi_jz=0$ for all $j<b+1$. In this case, the Garnir relations in 4 are $\psi_jz=0$ for all $j>b+1$ and $\psi_1\psi_2\psi_3\dots\psi_{b+1}z=0$. So we have \[S_{(a,1^b)}=\langle z \ | \ e(i)z=\delta_{i,i_\lambda}, \ y_k z=0 \ \forall k, \ \psi_j z=0 \ \forall j\neq b+1, \ \psi_1\psi_2\dots\psi_{b+1}z=0\rangle.\]
\end{rem}

\begin{defn}
Let $\lambda \vdash n$ and $T\in\std(\lambda)$. Define $w_T\in\mathfrak{S}_n$ to be the permutation such that $w_TT_\lambda=T$. Let $s_i$ be the basic transposition $(i,i+1)$. Fix a preferred reduced expression $w=s_{r_1}\dots s_{r_m}$ for each element $w\in\mathfrak{S}_n$, where $m$ is minimal and $1\leqslant r_1,\dots,r_m\leqslant n-1$. Define $\psi_w=\psi_{s_{r_1}}\dots \psi_{s_{r_m}}$. Finally, define $v_T=\psi_{w_T}z$.
\end{defn}

\begin{thm}\cite[Corollary 7.20]{kmr}
Let $\lambda\vdash n$. Then $\mathscr{B}=\{v_T \ | \ T\in\std(\lambda)\}$ is a basis of $S_\lambda$.
\end{thm}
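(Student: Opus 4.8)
The plan is to establish the two halves of the claim separately: that $\mathscr B$ spans $S_\lambda$, and that it is linearly independent.

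\emph{Spanning.} The KLR algebra $\mathscr H$ has a PBW-type spanning set $\{\psi_w \mathbf{y}^{\mathbf a} e(i)\}$, where $w$ runs over $\mathfrak S_n$ with the chosen reduced words, $\mathbf a$ over tuples of non-negative integers, and $i$ over $\{0,1\}^n$. Applying such an element to the generator $z=z_\lambda$ and using relation 2, $e(i)z=\delta_{i,i_\lambda}z$, together with relation 1, $y_kz=0$, kills every term with $\mathbf a\neq 0$ or $i\neq i_\lambda$ --- crucially because the $y$'s sit to the right of $\psi_w$ in this ordering. Hence $S_\lambda=\operatorname{span}_{\mathbb F}\{\psi_w z : w\in\mathfrak S_n\}$. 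It remains to cut this down from all of $\mathfrak S_n$ to the $w_T$ with $T\in\std(\lambda)$. I would attach to $\psi_w z$ the tableau $wT_\lambda$ and argue by induction, on dominance refined by length, that whenever $wT_\lambda$ fails to be standard, $\psi_w z$ can be rewritten as an $\mathbb F$-combination of $\psi_{w'}z$ with $w'T_\lambda$ strictly closer to standard. Column violations inherited from the initial tableau are removed using relation 3, and the remaining straightening is carried out with the homogeneous Garnir relations of relation 4 together with the braid and commutation relations among the $\psi$'s. For a hook $\lambda=(a,1^b)$ this simplifies dramatically, since relation 3 reads $\psi_jz=0$ for $j<b+1$ and relation 4 collapses to the single relation $\psi_1\psi_2\cdots\psi_{b+1}z=0$ (plus $\psi_jz=0$ for $j>b+1$).

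\emph{Linear independence.} Spanning already gives $\dim_{\mathbb F}S_\lambda\leqslant|\std(\lambda)|$, so it is enough to produce a module of dimension exactly $|\std(\lambda)|$ onto which $S_\lambda$ surjects. I would take the ordinary (ungraded) Dipper--James Specht module $S^\lambda$, whose $\mathbb F$-dimension is classically the number of standard $\lambda$-tableaux. Transporting $S^\lambda$ to an $\mathscr H$-module through the Brundan--Kleshchev isomorphism, I would check that the image of a suitable highest-weight vector satisfies relations 1--4, which produces a surjection $S_\lambda\twoheadrightarrow S^\lambda$. Comparing dimensions,
\[ |\std(\lambda)| = \dim_{\mathbb F}S^\lambda \leqslant \dim_{\mathbb F}S_\lambda \leqslant |\std(\lambda)|, \]
forces equality throughout, so the spanning set $\mathscr B$ consists of $|\std(\lambda)|$ elements spanning a space of that dimension, and is therefore a basis.

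\emph{Main obstacle.} The genuine work is the straightening in the spanning step: verifying that relations 3 and 4, together with the $\psi$-relations, really do express every $\psi_w z$ in terms of standard $v_T$, and that the chosen order makes the induction terminate. This demands careful combinatorial control of the tableaux $wT_\lambda$, and is exactly where the homogeneous Garnir relations do their work. On the independence side, the only delicate point is identifying the images of the KLR generators under the Brundan--Kleshchev isomorphism precisely enough to confirm that relations 1--4 hold in $S^\lambda$; once that is in hand, the dimension count is immediate.
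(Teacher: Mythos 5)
The paper offers no proof of this statement: it is quoted directly from \cite[Corollary 7.20]{kmr}, so there is no internal argument to compare yours against. Your outline is essentially the strategy Kleshchev--Mathas--Ram themselves follow --- spanning by a straightening argument driven by the homogeneous Garnir relations, and linear independence by exhibiting a surjection onto a module already known to have dimension $|\std(\lambda)|$ (in their case the Brundan--Kleshchev--Wang graded Specht module, which is the classical Dipper--James module in disguise, so your dimension count is the same one) --- though, as you concede, the straightening step is where all the substance lies and your sketch defers it entirely, so what you have is a correct plan rather than a proof.
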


\begin{rem}
Note that the elements $v_T$ depend on choices of preferred expressions of elements of $\mathfrak{S}_n$. However, when $\lambda$ is a hook partition, these expressions are unique up to applying commutation relations, and hence $v_T$ is well-defined.
\end{rem}

We now look at some basic results for Specht modules which will be useful for our purposes.

\begin{thm}\label{conjugate}
$S_\lambda$ is decomposable if and only if $S_{\lambda'}$ is.
\end{thm}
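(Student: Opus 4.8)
The plan is to realise $S_{\lambda'}$ as the image of $S_\lambda$ under two operations, each of which preserves (in)decomposability: dualising, and twisting by an algebra automorphism. Recall first that a finite-dimensional module $M$ over an $\mathbb{F}$-algebra $A$ is decomposable exactly when $\End_A(M)$ contains an idempotent other than $0$ and $1$. Since an algebra and its opposite share the same set of idempotents, and since the homogeneous anti-automorphism of the KLR algebra that fixes each generator induces an isomorphism $\End_A(M^\circledast)\cong\End_A(M)^{\mathrm{op}}$ for the contragredient dual $M^\circledast$, we see that $M$ is decomposable if and only if $M^\circledast$ is. Likewise, for any algebra automorphism $\sigma$ the twisted module $M^\sigma$ satisfies $\End_A(M^\sigma)\cong\End_A(M)$, so $M^\sigma$ is decomposable if and only if $M$ is. Grading shifts are immaterial to this criterion.

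It therefore suffices to produce an isomorphism $S_{\lambda'}\cong (S_\lambda^\circledast)^\sigma$, up to a grading shift, where $\sigma$ is the sign automorphism determined by $e(i)\mapsto e(-i)$, $y_r\mapsto -y_r$ and $\psi_r\mapsto-\psi_r$. One checks directly on the KLR relations that $\sigma$ is well defined: the signs introduced by $y_r\mapsto-y_r$ and $\psi_r\mapsto-\psi_r$ cancel in the quadratic and braid relations, and the $\pm1$ terms transform correctly. At $e=2$ we have $-i\equiv i$, so $\sigma$ in fact fixes every $e(i)$. The combinatorial heart of the isomorphism is the transpose bijection $T\mapsto T'$ from $\std(\lambda)$ to $\std(\lambda')$, together with the observation that the node $(i,j)$ and its transpose $(j,i)$ carry opposite residues $j-i$ and $i-j$; thus conjugating tableaux negates residue sequences, which modulo $2$ changes nothing. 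I would define the isomorphism by sending $z_{\lambda'}$ to the distinguished vector of $(S_\lambda^\circledast)^\sigma$ matched to $z_\lambda$ under this comparison, and then verify that the defining relations of $S_{\lambda'}$ --- the vanishing of the $y_k$, the idempotent relation, the $\psi_j$-relations, and the homogeneous Garnir relations --- are carried to relations that hold after dualising and twisting, and conversely.

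The main obstacle is this last verification, and specifically the matching of the homogeneous Garnir relations for $\lambda$ and for $\lambda'$ under the combined dual-and-twist. For the applications in this paper only hook partitions are needed, and there the task simplifies considerably: by the Remark following the definition of the Specht module, relation 3 and the Garnir relations for a hook $(a,1^b)$ reduce to the explicit conditions $\psi_j z=0$ for $j\neq b+1$ together with $\psi_1\psi_2\cdots\psi_{b+1}z=0$, and the conjugate hook $(b+1,1^{a-1})$ yields the mirror-image conditions. Checking that these two families of relations correspond under $\circledast$ and $\sigma$ is then a finite, transparent piece of index bookkeeping, which I expect to present in the hook case and, for the general statement, to reduce to the general homogeneous Garnir relations of \cite{kmr}.
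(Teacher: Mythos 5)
Your overall strategy is sound and the two reduction steps are correctly justified: decomposability is governed by the existence of nontrivial idempotents in $\End_{\mathscr{H}}(M)$, an algebra and its opposite have the same idempotents, so passing to the contragredient dual $M^\circledast$ (via the graded anti-automorphism fixing the KLR generators) and twisting by the sign automorphism $\sigma$ both preserve decomposability, and grading shifts are irrelevant. Your check that $\sigma$ respects the relations is also correct, and at $e=2$ it does fix every $e(i)$. However, this is a genuinely different route from the paper's, which disposes of the theorem in one line by citing the classical (ungraded) Hecke-algebra result \cite[Theorem~3.5]{dj2}: there the Specht module for $\lambda'$ is related to the dual of $S_\lambda$ twisted by the automorphism $T_w\mapsto(-q)^{\ell(w)}T_{w^{-1}}^{-1}$, which is precisely the ungraded avatar of your dual-and-sign-twist. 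What your approach buys is a statement and proof entirely internal to the KLR setting the paper works in; what the citation buys is brevity and no dependence on the graded theory.

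The one substantive gap is that the isomorphism $S_{\lambda'}\cong(S_\lambda^\circledast)^\sigma$ (up to shift) --- which is the entire content of the theorem once the formal reductions are made --- is only sketched. For general $\lambda$ the matching of the homogeneous Garnir relations under dualising and twisting is not ``a finite, transparent piece of index bookkeeping'': it is essentially the content of the corresponding theorem in the literature relating row and column Specht modules (see \cite{kmr} and \cite{bkw}), and verifying it by hand from the presentation is a serious undertaking because the dual module is presented by a dual basis rather than by a cyclic generator, so one must first locate a generating vector of $(S_\lambda^\circledast)^\sigma$ of the correct weight and degree before checking any relations. Since the paper only ever applies the theorem to hooks and their conjugates, your plan of carrying out the verification explicitly in the hook case would suffice for every use made of the result here; for the general statement you should close the argument by citing the known graded duality theorem rather than attempting to reprove it.
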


\begin{proof}
The result follows from \cite[Theorem 3.5]{dj2}.
\end{proof}

\begin{thm}
If $e\neq 2$, or if $\lambda$ is $2$ regular, then $S_\lambda$ is indecomposable.
\end{thm}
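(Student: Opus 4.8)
The plan is to separate the two clauses of the hypothesis, since they are governed by different phenomena and no single elementary mechanism covers both. The first clause, $e\neq 2$, is the Iwahori--Hecke analogue of James's classical theorem that every Specht module for $\mathbb{F}\mathfrak{S}_n$ is indecomposable when $\Char\mathbb{F}\neq 2$, and it holds for all $\lambda\vdash n$, including the $e$-singular ones. Here I would simply invoke the established result of Dipper and James; indeed the failure of this statement at $e=2$ is exactly what the remainder of the paper sets out to investigate, so no new argument is required for this clause.

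For the second clause I would argue through the head of the Specht module. The first step is the combinatorial observation that a $2$-regular partition --- one in which no part is repeated --- is automatically $e$-regular for every $e\geqslant 2$, since distinct parts certainly cannot be repeated $e$ or more times; thus the hypothesis ``$\lambda$ is $2$-regular'' may be strengthened to ``$\lambda$ is $e$-regular'' at no cost. For $e$-regular $\lambda$ the graded Specht module $S_\lambda$ has a simple head, namely the graded irreducible $D_\lambda=S_\lambda/\operatorname{rad}S_\lambda\neq 0$ cut out by the radical of its bilinear form. The key step is then the elementary fact that a nonzero finite-dimensional module with simple head is indecomposable: were $S_\lambda=U\oplus V$ with $U,V\neq 0$, then, passing to heads, $D_\lambda\cong\operatorname{head}(U)\oplus\operatorname{head}(V)$ would be a direct sum of two nonzero modules, contradicting its simplicity. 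Hence $S_\lambda$ is indecomposable.

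The main obstacle is conceptual rather than computational: the clean simple-head argument only reaches $e$-regular partitions, so it cannot be stretched to cover $e$-singular $\lambda$, for which $D_\lambda=0$ and $S_\lambda$ has no irreducible quotient of this form. Those partitions are handled solely by the $e\neq 2$ input --- precisely the ingredient that collapses at $e=2$. I would therefore flag that for a hook $\lambda=(a,1^b)$ the $2$-regular hypothesis is quite restrictive, forcing $b\leqslant 1$, so that this theorem settles only the easiest hook cases and leaves open exactly the decomposability questions pursued in the later sections.
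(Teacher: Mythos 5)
Your proposal is correct, but it handles the second clause by a genuinely different mechanism from the paper. The paper's proof is uniform: it deduces both clauses at once from the Dipper--James computation \cite[Corollary 8.7]{dj3} that $\operatorname{Hom}_{\mathscr{H}}(S_\lambda,M_\lambda)$ is one-dimensional outside the excluded cases, and then runs James's classical argument (as for \cite[Theorem 13.13]{j}) to conclude that $\End_{\mathscr{H}}(S_\lambda)$ is one-dimensional, hence has no nontrivial idempotents. Your treatment of the $e\neq 2$ clause is essentially the same (a citation of Dipper--James), but for the $2$-regular clause you instead observe that $2$-regular implies $e$-regular and invoke the simple head $D_\lambda=S_\lambda/\operatorname{rad}S_\lambda\neq 0$; the head-of-a-direct-sum argument is then airtight. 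The trade-off: the paper's route yields the stronger statement $\dim\End_{\mathscr{H}}(S_\lambda)=1$, which is the statistic it actually tracks in Section 4, whereas your route gives only indecomposability but is more self-contained for the regular case. One small point to watch: the paper works with the \emph{column} Specht module of \cite{kmr}, for which the simple head is indexed by $e$-restricted rather than $e$-regular partitions under some conventions; this is harmless here, since a module with simple socle is equally indecomposable and the paper's Theorem \ref{conjugate} lets you pass to the conjugate, but it deserves a sentence. Your closing observation that $2$-regularity forces $b\leqslant 1$ for hooks, so this theorem leaves the interesting hook cases untouched, is accurate.
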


\begin{proof}
The result follows from \cite[Corollary 8.7]{dj3} using a similar argument to that used by James in \cite{j} to prove the analogous result for the symmetric group from \cite[Theorem 13.13]{j}.
\end{proof}

In view of this last result, we seek to classify decomposability of Specht modules $S_\lambda$ when $\lambda$ is $2$-singular and $e=2$. We will focus on the special case where $\lambda=(a,1^b)$ for $b\geqslant 2$.

\section{Specht Modules for Hook Partitions}

Decomposability of Specht modules for hook partitions was solved by Murphy in the case of the symmetric group:

\begin{thm}\cite[Theorem 4.5]{gm}
Suppose $\Char\mathbb{F}=2$. Then $S_{(a,1^b)}$ is indecomposable if and only if $n$ is even or $a-1\equiv b\pmod{2^L}$ where $2^{L-1}\leqslant b<2^L$.
\end{thm}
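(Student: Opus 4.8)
The plan is to reduce the question to the endomorphism algebra: $S_\lambda$ is indecomposable precisely when $\End(S_\lambda)$ has no idempotent other than $0$ and $1$, i.e.\ when it is local. Working with the presentation of \cite{kmr}, I would first note that a homomorphism $f\colon S_\lambda\to S_\lambda$ is determined by the image $w=f(z)$, and that such an $f$ exists exactly when $w$ satisfies the defining relations of the generator $z$ recorded in the remark for hook partitions, namely $e(i)w=\delta_{i,i_\lambda}w$, $y_kw=0$ for all $k$, $\psi_jw=0$ for all $j\neq b+1$, and $\psi_1\psi_2\cdots\psi_{b+1}w=0$. Since $e(i_\lambda)w=w$, any such $w$ lies in the span of those basis vectors $v_T\in\mathscr{B}$ with $i_T=i_\lambda$, so the first concrete task is to enumerate the standard $(a,1^b)$-tableaux sharing the residue sequence of the initial tableau and to impose the four families of relations on a general combination of the corresponding $v_T$. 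By Theorem~\ref{conjugate} together with the identity $(a,1^b)'=(b+1,1^{a-1})$, which interchanges the arm length $a-1$ with the leg length $b$, I may also assume $b\leqslant a-1$, and I would expect to use this symmetry as a consistency check on the final arithmetic condition.

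I would then split on the parity of $n=a+b$. In the even case I expect to show that the only $w$ solving the relations above are the scalar multiples of $z$, so that $\End(S_\lambda)$ is local and $S_\lambda$ is indecomposable; this matches the claim that the even case is the easy one. Concretely, applying $\psi_{b+1}$ (the single generator not annihilating $z$) and the $y_k$ to a candidate $w$ should force every coefficient but that of $v_{T_\lambda}$ to vanish, which is a finite linear-algebra computation over the relevant weight space.

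The real content is the odd case. Here I would construct an explicit nontrivial endomorphism $f$ by exhibiting a vector $w=f(z)$ of the required form that is not a scalar multiple of $z$; the natural building block is $\psi_{b+1}$ and products of the $\psi$'s, arranged so that the Garnir relation $\psi_1\cdots\psi_{b+1}z=0$ and the KLR braid and commutation relations together kill the $\psi_j$ with $j\neq b+1$ and all the $y_k$. With $f$ in hand I would compute its matrix on the span of the $v_T$ with $i_T=i_\lambda$, read off its eigenvalues, and decompose $S_\lambda$ into the generalised eigenspaces of $f$. The module is then indecomposable exactly when $f$ acts with a single eigenvalue modulo the radical, and decomposable as soon as two distinct eigenvalues occur.

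The main obstacle is the explicit evaluation of this action and the extraction of the eigenvalues. I anticipate that commuting $\psi_{b+1}$ and the $y_k$ past the various $\psi_{w_T}$ produces binomial-coefficient-type coefficients, and that reducing them modulo $2$ by Lucas' theorem is what yields the stated condition $a-1\equiv b\pmod{2^L}$ with $2^{L-1}\leqslant b<2^L$: the eigenvalues should coincide precisely when the bottom $L$ binary digits of $a-1$ and $b$ agree, and separate otherwise. I would also expect the boundary case $\lambda=(a,1^2)$, where $b=2$ and the relevant weight space is small, to degenerate and to need a separate hands-on argument; dispatching it using the generator actions established for the generic case should then complete the classification.
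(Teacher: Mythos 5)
You should first be aware that the paper contains no proof of this statement: it is Murphy's theorem for the symmetric group in characteristic $2$, quoted verbatim from \cite{gm} precisely so that the characteristic-$2$ case can be set aside and the rest of the paper may assume $\Char\mathbb{F}\neq 2$. Your proposal is therefore not a reconstruction of an argument in the paper but a transplant of the paper's \emph{own later method} (endomorphisms determined by $f(z)$, the domino basis of $e(i_\lambda)S_\lambda$, an explicit $f$, generalised eigenspaces) into the setting $\Char\mathbb{F}=2$. The even case and the general reduction to $\End(S_\lambda)$ are fine and do match what the paper does elsewhere.

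The genuine gap is in the indecomposability direction of the odd case. Exhibiting one endomorphism $f$ and observing that it has a single eigenvalue does not show that $\End(S_\lambda)$ is local; it only shows that this particular $f$ fails to split the module. The paper itself is careful about this: for $b=2$ it separately proves that $\{I,f\}$ spans the whole endomorphism algebra and computes $f^2$ before concluding indecomposability, and for $b\geqslant 4$ it only ever uses the eigenvalues to prove \emph{de}composability. Moreover, in characteristic $2$ the eigenvalue separation collapses: writing $a=2r+1$, $b=2s$, the eigenvalues of the paper's $f$ are $-d(r+s+1-d)$ for $d=0,\dots,s$, and modulo $2$ these are all equal exactly when $r+s$ is even --- a single parity condition, far weaker than Murphy's condition $a-1\equiv b\pmod{2^L}$ with $2^{L-1}\leqslant b<2^L$, which constrains the bottom $L$ binary digits. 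So one endomorphism cannot detect the full $2$-adic condition, and your hoped-for appearance of Lucas' theorem would require either a whole family of endomorphisms or, as in Murphy's actual proof, a direct construction of idempotents in $\End(S_{(a,1^b)})$ over $\mathbb{F}\mathfrak{S}_n$ whose coefficients are binomial coefficients reduced modulo $2$. As written, your plan proves decomposability when two eigenvalues separate, but cannot establish indecomposability in the remaining cases, which is where all the arithmetic content of the statement lives.
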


Using this result, we will be able to assume $\Char\mathbb{F}\neq2$ where necessary. The following result will also reduce our workload later on.

\begin{thm}\label{branching}
Suppose $a$ is odd and $b$ is even. Then $S_{(a,1^b)}$ is decomposable if and only if $S_{(a+1,1^{b+1})}$ is.
\end{thm}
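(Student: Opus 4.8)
The plan is to relate the two Specht modules through the $i$-induction and $i$-restriction functors on the category of graded modules for the cyclotomic KLR algebras, and then to deduce the equivalence of decomposability from an isomorphism of endomorphism algebras. Write $\mu=(a,1^b)$ and $\nu=(a+1,1^{b+1})$, so that $\nu$ is obtained from $\mu$ by adding two nodes. Recall that a finite-dimensional module $M$ is indecomposable if and only if $\End(M)$ is local, equivalently if and only if $\End(M)$ contains no nontrivial idempotent; hence it suffices to establish an algebra isomorphism $\End(S_\mu)\cong\End(S_\nu)$.

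First I would record the combinatorics of $1$-nodes. Since $a$ is odd and $b$ is even, a direct check of the residues $j-i\pmod 2$ shows that the only addable $1$-nodes of $\mu$ are $(1,a+1)$ (residue $a\equiv 1$) and $(b+2,1)$ (residue $-(b+1)\equiv 1$), while every removable node of $\mu$ has residue $0$. Thus $\mu$ has exactly two addable and no removable $1$-nodes, and adding both produces precisely $\nu$. In the language of the $\mathfrak{sl}_2$-categorification this reads $\epsilon_1(\mu)=0$ and $\varphi_1(\mu)=2$, so that $e_1 S_\mu=0$ and $\mu$ sits at the top of a $1$-string of length three terminating at $\nu$.

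Next I would apply the divided power functor $f_1^{(2)}$. Using that $i$-induction preserves the class of modules admitting a Specht filtration and that the graded branching multiplicities are quantum binomial coefficients \cite{kmr}, the factors of $f_1^{(2)}S_\mu$ are the $S_\lambda$ with $\lambda$ obtained from $\mu$ by adding two $1$-nodes; the combinatorics above leaves $\lambda=\nu$ as the unique possibility, with graded multiplicity $\left[\begin{smallmatrix}2\\2\end{smallmatrix}\right]=1$. Hence $f_1^{(2)}S_\mu\cong S_\nu$ up to a grading shift. The same $\mathfrak{sl}_2$ bookkeeping, together with $e_1 S_\mu=0$, yields $e_1^{(2)}f_1^{(2)}S_\mu\cong S_\mu$ up to shift, this being the categorified identity $e^{(2)}f^{(2)}v=v$ on the highest-weight vector of the $3$-dimensional representation. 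By the biadjointness of $e_1^{(2)}$ and $f_1^{(2)}$ (up to grading shift) I then obtain
\[\End(S_\nu)=\operatorname{Hom}(f_1^{(2)}S_\mu,f_1^{(2)}S_\mu)\cong\operatorname{Hom}(S_\mu,e_1^{(2)}f_1^{(2)}S_\mu)\cong\operatorname{Hom}(S_\mu,S_\mu)=\End(S_\mu).\]

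Finally I would upgrade this vector-space isomorphism to an algebra isomorphism by verifying that the natural algebra map $\theta\mapsto f_1^{(2)}(\theta)$ from $\End(S_\mu)$ to $\End(S_\nu)$ is the map realising the adjunction above; it is injective because the unit $S_\mu\to e_1^{(2)}f_1^{(2)}S_\mu$ is an isomorphism, again a consequence of $e_1 S_\mu=0$. Transporting a nontrivial idempotent across this isomorphism then gives the equivalence of decomposability. The main obstacle is precisely this last step: controlling the grading shifts through the biadjunction, confirming the module-level isomorphism $e_1^{(2)}f_1^{(2)}S_\mu\cong S_\mu$ (rather than a mere Grothendieck-group identity, which needs the categorical $\mathfrak{sl}_2$ relations), and checking that the resulting bijection of Hom-spaces is implemented by the functor itself so that it respects composition. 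Everything else -- the residue computation giving $\epsilon_1(\mu)=0$ and $\varphi_1(\mu)=2$, and the multiplicity-one branching to $\nu$ -- is routine once the combinatorial set-up is in place.
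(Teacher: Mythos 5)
Your combinatorial set-up matches the paper's: both arguments rest on the observation that, writing $\mu=(a,1^b)$ and $\nu=(a+1,1^{b+1})$ with $a$ odd and $b$ even, one has $\varphi_1(S_\mu)=\varepsilon_1(S_\nu)=2$, with $f_1^{(2)}S_\mu\cong S_\nu$ and $e_1^{(2)}S_\nu\cong S_\mu$ read off from the branching rules and the block classification. Where you diverge is in how decomposability is extracted. The paper argues much more cheaply: since $e_1^{(2)}$ and $f_1^{(2)}$ are exact and $e_1^{(\varepsilon_1(D))}D$ is simple for simple $D$, comparing composition lengths in both directions (length of $e_1^{(2)}S_\nu$ is at most that of $S_\nu$, and length of $f_1^{(2)}S_\mu$ is at most that of $S_\mu$) forces $\varepsilon_1(D)=2$ for every composition factor $D$ of $S_\nu$, hence $e_1^{(2)}N\neq 0$ for every nonzero submodule $N$; so $e_1^{(2)}$ carries a nontrivial direct-sum decomposition of $S_\nu$ to one of $S_\mu$, and symmetrically for $f_1^{(2)}$. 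No adjunction and no isomorphism of endomorphism algebras is needed.

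Your route aims at the stronger statement $\End(S_\mu)\cong\End(S_\nu)$ via biadjointness of the divided-power functors, and the step you yourself flag as the main obstacle is a genuine gap as written: the biadjointness of $e_1^{(2)}$ and $f_1^{(2)}$ (these are direct summands of $e_1^2$ and $f_1^2$ cut out by the nilHecke action, so this is not formal), the claim that the unit $S_\mu\to e_1^{(2)}f_1^{(2)}S_\mu$ is an isomorphism (this requires the categorical $\mathfrak{sl}_2$ relations at the level of functors, not merely the Grothendieck-group identity $e^{(2)}f^{(2)}v=v$), and the compatibility of the resulting bijection of Hom-spaces with composition are all left unverified, and none of them is routine. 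Note, however, that the weaker facts you actually need --- that the unital algebra homomorphisms $\theta\mapsto f_1^{(2)}\theta$ and $\theta\mapsto e_1^{(2)}\theta$ are injective --- already follow from the paper's composition-length argument, since an exact functor kills a morphism if and only if it kills its image, and injective unital algebra maps in both directions suffice to transport nontrivial idempotents. So your argument can be completed without any of the heavy categorical machinery, but it does not stand as written.
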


\begin{proof}
For any $r\geqslant0$ and any $i$, functors
\begin{align*}
e_i^{(r)}&:\mathscr{H}_{n+r}\text{-}\bmod\rightarrow\mathscr{H}_n\text{-}\bmod\\
f_i^{(r)}&:\mathscr{H}_n\text{-}\bmod\rightarrow\mathscr{H}_{n+r}\text{-}\bmod
\end{align*}
are introduced in \cite[Section 2.2]{bk2}. These functors are exact, and have the following property: if $M$ is a non-zero module and we let $\varepsilon_i(M):=\max\{r\ |\ e_i^{(r)}M\neq0\}$, then:

\cite[Lemma 2.12]{bk2} If $D$ is a simple module, then $e_i^{(\varepsilon_i(D))}D$ is simple.

Since $e_i^{(r)}$ is exact, we have $\varepsilon_i(D)\leqslant \varepsilon_i(M)$ when $D$ is a composition factor of $M$, and so by the above lemma we deduce that the composition length of $e_i^{(\varepsilon_i(M))}M$ is at most the composition length of $M$, with equality if and only if $\varepsilon_i(D)=\varepsilon_i(M)$ for all composition factors $D$ of $M$.

A corresponding result holds with $f_i,\varphi_i$ in place of $e_i,\varepsilon_i$.

Now consider Specht modules. By \cite[Lemma 2.4]{bk2} and \cite[Equations (7)\&(8)]{bk2}, $e_i^{(r)}$ and $f_i^{(r)}$ can be interpreted as restriction and induction, respectively, followed by projection onto particular blocks. In view of the block classification for Hecke algebras of type $A$ \cite[Theorem 2.11]{lm3} and the branching rules for induction and restriction of Specht modules (\cite[Theorem 7.4]{dj1} and \cite[Proposition 1.9]{am} respectively), we deduce that $\varepsilon_i(S_\lambda)$ is the number of removable nodes of $\lambda$ of residue $i$, and $e_i^{(\varepsilon_i(S_\lambda))}S_\lambda$ is the Specht module labelled by the partition obtained by removing these nodes.  A corresponding statement holds for $f_i$ and addable nodes.

In particular, when $e=2$, $a$ is odd and $b$ is even, let $\lambda=(a,1^b)$ and $\mu=(a+1,1^{b+1})$. Then $\varepsilon_1(S_\mu)=\varphi_1(S_\lambda)=2$, and
$e_1^{(2)}S_\mu=S_\lambda$, $f_1^{(2)}S_\lambda=S_\mu$.

In view of the above results, this means that $S_\lambda$ and $S_\mu$ have the same composition length and that $e_1^{(2)}D\neq0$ for every composition factor $D$ of $S_\mu$. Hence (again by exactness) $e_1^{(2)}N\neq0$ for every submodule $N$ of $S_\mu$. Hence if $S_\mu$ is decomposable, then so is $S_\lambda$.  The same argument the other way round shows that if $S_\lambda$ is decomposable, then so is $S_\mu$.
\end{proof}

For the remainder of the paper, we fix $\lambda=(a,1^b)$ and $n=a+b$. Recall that \[S_\lambda=\langle z \ | \ y_kz=0 \ \forall k, \ \psi_jz=0 \ \forall j\neq b+1, \ \psi_1\psi_2\dots\psi_{b+1}z=0, \ e(i)z=\delta_{i,i_\lambda}z\rangle.\]

Suppose $f$ is an $\mathscr{H}$-endomorphism of $S_\lambda$. $z=e(i_\lambda)z$, so we have $f(z)\in e(i_\lambda)S_\lambda$. Now consider the basis $\mathscr{B}$. \cite[Lemma 4.4]{bkw} tells us that $e(i)v_T=\delta_{i,i_T}v_T$ for any $T\in\std(\lambda)$. Hence $\mathscr{B}\cap e(i)S_\lambda=\{v_T \ | \ i_T=i\}$. In particular, $f(z)$ is a linear combination of elements in $\mathscr{D}:=\mathscr{B}\cap e(i_\lambda)S_\lambda=\{v_T \ | \ i_T=i_\lambda\}$.
This is at the core of our approach to understanding $\End_\mathscr{H}(S_\lambda)$. 

\begin{defn}
When $\lambda=(a,1^b)$, we define the \emph{arm} to be the set of nodes

\noindent$\{(1,2),(1,3),\dots,(1,a)\}$ of $\lambda$ and the \emph{leg} to be the set of nodes $\{(2,1),(3,1),\dots,(b+1,1)\}$.
\end{defn}

\begin{lem}\label{D_i}
Suppose $b$ is even and $v_T\in\mathscr{D}$. Then for all $1\leqslant i\leqslant \lceil n/2\rceil-1$, $2i+1$ appears directly after $2i$ in $T$. That is, if $2i$ is in the leg of $T$ then $2i+1$ is directly below it, and if $2i$ is in the arm of $T$ then $2i+1$ is directly to the right of it.
\end{lem}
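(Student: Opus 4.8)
The plan is to first pin down the residue sequence $i_\lambda$ explicitly, then translate the condition $v_T\in\mathscr{D}$ (that is, $i_T=i_\lambda$) into a parity constraint on how the entries of $T$ are split between the arm and the leg, and finally run a short counting argument. The node in the $t$-th position of the leg is $(t+1,1)$, of residue $1-(t+1)\equiv t\pmod2$, the node in the $s$-th position of the arm is $(1,s+1)$, of residue $(s+1)-1\equiv s\pmod2$, and the corner $(1,1)$ has residue $0$. Reading these off $T_\lambda$ (whose leg holds $2,\dots,b+1$ and whose arm holds $b+2,\dots,n$) and using that $b$ is even, one finds $(i_\lambda)_k\equiv k-1\pmod2$ for every $k$, so $i_\lambda=(0,1,0,1,\dots)$. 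Hence for any $T$ with $i_T=i_\lambda$ the entry $k$ must occupy a node of residue $(k-1)\bmod 2$: even entries lie on residue-$1$ nodes and odd entries on residue-$0$ nodes.

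Since a standard $(a,1^b)$-tableau is completely determined by the choice of which of $2,\dots,n$ lie in the arm and which in the leg (entry $1$ is forced into the corner, and each part is then filled in increasing order), the condition $v_T\in\mathscr{D}$ says precisely that the arm entries and the leg entries, each listed increasingly, have parities $\text{even},\text{odd},\text{even},\text{odd},\dots$. I would then observe that the lemma reduces to the single claim that $2i$ and $2i+1$ always lie in the same part: once they do, being consecutive integers in the same increasing sequence, $2i+1$ sits in the position immediately after $2i$, which is exactly ``directly below'' in the leg or ``directly to the right'' in the arm. To prove this claim I would introduce $A_m$ and $L_m$, the numbers of arm and leg entries that are $\leqslant m$, so that $A_m+L_m=m-1$. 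The alternating-parity condition forces $A_m\equiv L_m\equiv 0\pmod 2$ when $m$ is odd and forces exactly one of $A_m,L_m$ to be odd when $m$ is even. Comparing $m=2i-1,2i,2i+1$: adding the entry $2i$ makes one of $A,L$ odd, and then adding $2i+1$ must restore both to even, which is possible only if $2i+1$ joins the same part that $2i$ just entered. This gives the claim, and the range $1\leqslant i\leqslant\lceil n/2\rceil-1$ is exactly the range in which $2i+1\leqslant n$.

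The substantive step is the last one, the parity bookkeeping with $A_m$ and $L_m$, although it is short once the set-up is in place. I expect the main care to be required in the first step: computing the residues correctly and exploiting the hypothesis that $b$ is even to see that the arm and leg residues ``line up'' so that $i_\lambda$ really is the alternating sequence. This is where evenness of $b$ enters, and it is what makes the clean even/odd dichotomy for $A_m$ and $L_m$ available.
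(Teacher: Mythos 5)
Your argument is correct. Both your proof and the paper's rest on the same key observation: when $b$ is even the residue sequence $i_\lambda$ alternates $0,1,0,1,\dots$, so the condition $i_T=i_\lambda$ forces every even entry onto a residue-$1$ node and every odd entry onto a residue-$0$ node, equivalently the arm entries and the leg entries, each read in increasing order, must have parities even, odd, even, odd, $\dots$. Where you diverge is in the final combinatorial step. The paper argues by induction on the dominoes: assuming $[2j,2j+1]$ is intact for all $j<k$ and supposing $[2k,2k+1]$ is split, it observes that $2k+1$ would then have to sit immediately after some $2j+1$ (or at the first arm node), hence on a node of the wrong residue. You instead run a direct, non-inductive count on the prefix statistics $A_m,L_m$, using $A_m+L_m=m-1$ together with the fact that the alternating-parity condition forces both counts to be even at every odd $m$ (that last fact deserves the one-line verification by counting even versus odd entries among $2,\dots,m$, but it is routine). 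Your version buys a slightly cleaner, self-contained argument that also disposes automatically of the boundary point the paper treats separately, namely that an even entry can never occupy the final node of the leg; the paper's induction is shorter to state but leans on an adjacency observation that is left somewhat implicit. Both routes constitute complete proofs of the lemma.
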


\begin{proof}
In defining $i_\lambda$, we assign all nodes of $\lambda$ in which $T_\lambda$ contains an even entry a $1$ and all others a $0$. First, we note that since $b$ is even and $v_T\in\mathscr{D}$, the final node in the leg of $\lambda$ has residue $0$. This ensures that if $2i$ is in the leg of $T$ there must be some entry immediately below it.

By induction on $i>1$, assume that $2i+1$ appears directly after $2i$ in $T$, for all $i<k$. Suppose our assertion is false for $i=k$. We assume without loss of generality that $2k$ is in the leg of $T$ and $2k+1$ is in the arm. Now by induction any even number, $2j<2k$, is immediately followed by $2j+1$. This forces $2k+1$ to be adjacent to $2j+1$ for some $j<k$, and $v_T\notin\mathscr{D}$.
\end{proof}

The fact that entries must stick together in these pairs motivates our next definition.

\begin{defn}
We will call the pair of entries $2i,2i+1$ for $1\leqslant i\leqslant\lceil n/2\rceil-1$ a \emph{domino}. We will denote the domino by $[2i,2i+1]$ or $D_i$. We define a \emph{domino tableau} to be any $\lambda$-tableau $T$ such that $v_T\in\mathscr{D}$. We denote the set of domino tableaux by $\Dom(\lambda)$.
\end{defn}

\begin{rem}
$\mathscr{D}=\{v_T \ | \ T\in\Dom(\lambda)\}$ is a basis of $e(i_\lambda)S_\lambda$.
\end{rem}

Now, we separate our problem into cases where $a$ and $b$ are odd or even. When $b$ is even, we have $i_\lambda = 0101\dots 01$. If $b$ is odd, however, we have $i_\lambda=0101\dots 011010\dots 10$, where we have a repetition in the positions $b+1$ and $b+2$.

We will now begin by solving the simplest cases, where $n$ is even.

\section{Decomposability of $S_{(a,1^b)}$ when $n$ is even}

First, we will look at the case where $a$ and $b$ are both even.

\begin{lem}\label{killvt}
Suppose $T\in\std(\lambda)$ and $1<i<n$. If $i,i+1,\dots,n$ all lie in the arm of $T$ then $\psi_iv_T=0$. If $i$ lies in the leg of $T$ and $i+1$ lies in the arm, then $\psi_iv_T=v_U$, where $U$ is obtained from $T$ by swapping $i$ and $i+1$.
\end{lem}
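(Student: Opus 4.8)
The plan is to read off $\psi_i v_T$ directly from the definition $v_T=\psi_{w_T}z$, so that the whole computation reduces to understanding the permutation $w_T$ together with the single surviving Specht relation $\psi_j z=0$ for $j\neq b+1$. I identify each $\lambda$-tableau with the bijection from its nodes to $\{1,\dots,n\}$, so that $w_T=T\circ T_\lambda^{-1}$ and $w_T^{-1}(k)$ is the entry that $T_\lambda$ carries in the node occupied by $k$ in $T$; explicitly, $T_\lambda$ has $k$ in node $(k,1)$ for $1\leqslant k\leqslant b+1$ and $b+c$ in node $(1,c)$ for $2\leqslant c\leqslant a$. Two ingredients will do all the work: the standard fact that $\ell(s_iw)=\ell(w)+1$ if and only if $w^{-1}(i)<w^{-1}(i+1)$, and the earlier remark that for a hook any two reduced expressions of $w_T$ differ only by commutations $\psi_s\psi_r=\psi_r\psi_s$ (for $|s-r|\geqslant 2$), so that whenever prepending $s_i$ to a reduced word keeps it reduced, the resulting product of $\psi$'s is unambiguous and no braid relation is needed.

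For the first assertion, suppose $i,i+1,\dots,n$ all lie in the arm of $T$. Since arm entries increase along the row, these are the $n-i+1$ largest entries and so occupy the rightmost arm cells $(1,i-b),(1,i-b+1),\dots,(1,a)$ in order; comparing with $T_\lambda$ shows that $T$ and $T_\lambda$ agree on exactly these cells, so $w_T$ fixes each of $i,i+1,\dots,n$. Hence $w_T$ lies in the parabolic subgroup $\langle s_1,\dots,s_{i-2}\rangle$ acting on $\{1,\dots,i-1\}$, and every generator occurring in $\psi_{w_T}$ commutes with $\psi_i$. Therefore $\psi_i v_T=\psi_i\psi_{w_T}z=\psi_{w_T}\psi_i z$. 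As $i$ sits in column $c=i-b\geqslant 2$ we have $i\geqslant b+2$, so $i\neq b+1$ and $\psi_i z=0$, giving $\psi_i v_T=0$.

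For the second assertion, suppose $i$ lies in the leg of $T$, say in node $(p,1)$ with $p\geqslant 2$, and $i+1$ lies in the arm, say in node $(1,c)$ with $c\geqslant 2$. A short check shows that $U=s_iT$ is again standard, and $w_U=s_iw_T$. Here $w_T^{-1}(i)=p\leqslant b+1$ while $w_T^{-1}(i+1)=b+c\geqslant b+2$, so $w_T^{-1}(i)<w_T^{-1}(i+1)$ and $\ell(s_iw_T)=\ell(w_T)+1$. Thus prepending $s_i$ to a reduced word for $w_T$ yields a reduced word for $w_U$, and by the well-definedness remark $\psi_{w_U}=\psi_i\psi_{w_T}$. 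Hence $\psi_i v_T=\psi_i\psi_{w_T}z=\psi_{w_U}z=v_U$, as required.

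I expect the main obstacle to be bookkeeping rather than a single hard step: one must be certain that no correction terms arise. These are avoided precisely because in each case prepending $s_i$ to a reduced expression keeps it reduced, so the braid relation (the only KLR relation producing $y$-terms) is never invoked; the argument then rests entirely on the two combinatorial claims that $w_T$ fixes $i,\dots,n$ in the first case and that $\ell(s_iw_T)=\ell(w_T)+1$ in the second, both of which become immediate once the cells occupied by $i$ and $i+1$ are pinned down. The points needing the most care are the verification that $U=s_iT$ is standard and the legitimacy of appealing to uniqueness of reduced expressions up to commutation for $w_U$ as well as for $w_T$.
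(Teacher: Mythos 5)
Your proof is correct and follows essentially the same route as the paper: in the first case you show $\psi_{w_T}$ involves only $\psi_1,\dots,\psi_{i-2}$, commute $\psi_i$ through, and kill it against the relation $\psi_i z=0$ (noting $i\geqslant b+2\neq b+1$); in the second you use $w_T^{-1}(i)<w_T^{-1}(i+1)$ to see that prepending $s_i$ keeps the word reduced, so $\psi_i\psi_{w_T}z=v_U$. The extra details you supply (standardness of $s_iT$, the explicit column index $i-b\geqslant 2$, and the appeal to well-definedness of $\psi_{w_U}$ up to commutations) are all correct and only make the paper's argument more explicit.
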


\begin{proof}
First, suppose $i,i+1,\dots,n$ all lie in the arm of $T$ for some $1<i<n$. Then $v_T$ cannot possibly involve $\psi_j$ for any $j>i-2$. It follows that $\psi_i$ commutes with each generator $\psi_j$ appearing in $v_T$ and the result follows from the Specht module relations. 

To prove the second part of the lemma, we note that $w_T^{-1}(i)<w_T^{-1}(i+1)$. This is easily seen since $w_T^{-1}(j)$ is the number that occupies the same node in $T_\lambda$ that $j$ occupies in $T$. Hence if $s_{i_1}s_{i_2}\dots s_{i_r}$ is a reduced expression for $w_T$, then $s_is_{i_1}s_{i_2}\dots s_{i_r}$ is a reduced expression for $s_iw_T$. So $\psi_iv_T=v_U$.
\end{proof}

\begin{thm}\label{a/b evens}
If $a$ and $b$ are both even, then $\End_\mathscr{H}(S_\lambda)$ is one-dimensional. In particular, $S_\lambda$ is indecomposable.
\end{thm}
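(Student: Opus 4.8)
The plan is to use the correspondence $f \mapsto f(z)$, which identifies $\End_\mathscr{H}(S_\lambda)$ with the space of $v \in e(i_\lambda)S_\lambda$ satisfying the defining relations of $z$. Since $v_{T_\lambda}=z$ and every scalar multiple of $z$ visibly gives an endomorphism, it suffices to show this space is at most one-dimensional, and for that I only need the relations $\psi_j f(z)=0$ for $j\neq b+1$ (the $y_k$ and Garnir relations are not required for this direction). Writing $f(z)=\sum_{T\in\Dom(\lambda)}c_Tv_T$ via the basis $\mathscr{D}$ of $e(i_\lambda)S_\lambda$, I aim to prove $c_T=0$ for all $T\neq T_\lambda$. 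First I record the shape of a domino tableau when $a,b$ are even: entry $1$ sits at $(1,1)$, the singleton $n$ sits at the arm-end $(1,a)$, and the remaining entries form the dominoes $D_k=[2k,2k+1]$ for $1\leqslant k\leqslant n/2-1$, each lying vertically in the leg or horizontally in the arm, with exactly $(a-2)/2$ of them in the arm (so the leg is fully tiled by $b/2$ dominoes). Encoding $T$ by the set $A(T)\subseteq\{1,\dots,n/2-1\}$ of its arm-dominoes, I have $A(T_\lambda)=\{b/2+1,\dots,n/2-1\}$, the $(a-2)/2$ largest-indexed dominoes.

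The main step is a downward induction forcing these largest dominoes into the arm. I apply $\psi_{2k+1}f(z)=0$ for $k=n/2-1,n/2-2,\dots,b/2+1$ (so the indices $2k+1$ run through $n-1,n-3,\dots,b+3$, all distinct from $b+1$), and claim the $k$-th step yields $c_T=0$ unless $\{k,k+1,\dots,n/2-1\}\subseteq A(T)$. Assume inductively that $c_T=0$ unless $\{k+1,\dots,n/2-1\}\subseteq A(T)$, and restrict attention to such $T$. If $k\in A(T)$ then $D_k,\dots,D_{n/2-1}$ and the singleton $n$ all lie in the arm, so entries $2k+1,2k+2,\dots,n$ all lie in the arm and Lemma~\ref{killvt} gives $\psi_{2k+1}v_T=0$. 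If instead $k\notin A(T)$, then $D_k$ is the largest leg-domino and hence occupies the foot of the (fully tiled) leg, so $2k+1$ sits at $(b+1,1)$ while $2k+2$---either the top of $D_{k+1}$ or, when $k=n/2-1$, the final entry $n$---lies in the arm; Lemma~\ref{killvt} then gives $\psi_{2k+1}v_T=v_{U(T)}$, where $U(T)$ is the standard tableau obtained by swapping $2k+1$ and $2k+2$. For fixed $k$ the assignment $T\mapsto U(T)$ is injective, so the $v_{U(T)}$ are distinct elements of the basis $\mathscr{B}$ and hence linearly independent. Thus $0=\psi_{2k+1}f(z)=\sum_T c_Tv_{U(T)}$, the sum being over those $T$ with $\{k+1,\dots,n/2-1\}\subseteq A(T)$ and $k\notin A(T)$, and linear independence forces each such $c_T$ to vanish, completing the step.

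After the final step ($k=b/2+1$) I conclude that $c_T=0$ unless $\{b/2+1,\dots,n/2-1\}\subseteq A(T)$; since $|A(T)|=(a-2)/2$ is exactly the size of this set, this forces $A(T)=A(T_\lambda)$, i.e. $T=T_\lambda$. Hence $f(z)=c_{T_\lambda}z$, so $\End_\mathscr{H}(S_\lambda)$ is one-dimensional, whence $S_\lambda$ admits no nontrivial idempotents and is indecomposable. The edge case $a=2$ is immediate, since then $\Dom(\lambda)=\{T_\lambda\}$ and the induction is vacuous. I expect the main obstacle to be the bookkeeping in the inductive step---in particular verifying the positional claims (that a leg-domino $D_k$ with all larger dominoes in the arm must sit at the foot of the leg, and that the $U(T)$ are genuinely standard and pairwise distinct) so that Lemma~\ref{killvt} applies cleanly and no cancellation occurs among the $v_{U(T)}$.
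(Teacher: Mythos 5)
Your proposal is correct and follows essentially the same route as the paper: applying $\psi_{n-1},\psi_{n-3},\dots,\psi_{b+3}$ in turn and using Lemma \ref{killvt} to force the largest-indexed dominoes into the arm one at a time, concluding that $f(z)$ is a scalar multiple of $z$. Your additional care over the standardness and pairwise distinctness of the tableaux $U(T)$ simply makes explicit details that the paper leaves implicit.
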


\begin{proof}
Suppose $f\in\End_\mathscr{H}(S_\lambda)$. Then by the above remark, \[f(z)=\sum_{T\in\Dom(\lambda)}\alpha_Tv_T\text{ for some } \alpha_T\in\mathbb{F}.\] Then by Lemma \ref{killvt}, acting on the left by $\psi_{n-1}$ annihilates all $v_T$ for tableaux $T$ which don't have $D_{\frac{n-2}{2}}$ in their leg. Now, in the remaining tableaux, Lemma \ref{killvt} gives us that $\psi_{n-1}v_T\in\mathscr{B}\setminus\mathscr{D}$. Since $\psi_{n-1}f(z)=0$, we must have $\alpha_T=0$ for all $T$ which have $D_{\frac{n-2}{2}}$ in the leg.

In this way, we act on $f(z)$ by $\psi_{n+1-2i}$ for $i=1,2,\dots,(a-2)/2$ to annihilate all $v_T$ for tableaux $T$ which don't have $D_{\frac{n-2i}{2}}$ in the leg. At each step, we deduce that $\alpha_T=0$ if $T$ has $D_{\frac{n-2i}{2}}$ in the leg.

Therefore $f(z)=\alpha z$ for some $\alpha\in\mathbb{F}$ and the result follows.
\end{proof}

Next, we look at the case where $a$ and $b$ are both odd.

\begin{thm}\label{a/b odds}
If $a$ and $b$ are both odd, then $\End_\mathscr{H}(S_\lambda)$ is one-dimensional. In particular, $S_\lambda$ is indecomposable.
\end{thm}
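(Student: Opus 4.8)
The plan is to mimic the proof of Theorem \ref{a/b evens} as closely as possible, exploiting the fact that when $a$ and $b$ are both odd, $n=a+b$ is still even, so $i_\lambda=0101\dots01$ has the clean alternating form and Lemma \ref{D_i} applies. As before, any $f\in\End_\mathscr{H}(S_\lambda)$ satisfies $f(z)=\sum_{T\in\Dom(\lambda)}\alpha_Tv_T$, and the strategy is to act on $f(z)=f(e(i_\lambda)z)$ on the left by a carefully chosen sequence of $\psi_r$'s, each of which kills the tableaux having a particular domino in the leg, thereby forcing the corresponding coefficients $\alpha_T$ to vanish one domino at a time until only $z=v_{T_\lambda}$ survives.

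First I would set up the domino bookkeeping. Since $b$ is odd, the leg of $\lambda$ has $b$ nodes in rows $2,\dots,b+1$; the final leg node $(b+1,1)$ has residue $-(b)\equiv 0\pmod 2$ precisely because $b$ is odd, so the parity structure differs from the even case and I must track which dominoes are forced to lie wholly within the arm or leg. The key structural input is Lemma \ref{D_i}: in any domino tableau the entries $2i,2i+1$ travel together, so a domino tableau is determined by choosing, for each domino (and the singleton entry $1$, which always sits at $(1,1)$), whether it lands in the arm or the leg, subject to the shape constraints. I would then identify the ``top'' domino of the leg and find a generator $\psi_r$ that, via Lemma \ref{killvt}, annihilates every $v_T$ except those whose leg contains that top domino, and sends the survivors into $\mathscr{B}\setminus\mathscr{D}$; since $\psi_r f(z)=0$ and the images are linearly independent basis elements, this forces the relevant $\alpha_T=0$.

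Iterating this peeling procedure, I would act successively by the appropriate $\psi_r$ (the analogue of $\psi_{n+1-2i}$ from the even--even case, with indices adjusted for the odd--odd parity) to strip off one leg domino at a time, at each stage deducing that $\alpha_T=0$ for every tableau placing that domino in the leg. After exhausting all arm/leg choices, the only surviving coefficient is $\alpha_{T_\lambda}$, whence $f(z)=\alpha z$ and $\End_\mathscr{H}(S_\lambda)$ is one-dimensional; indecomposability follows since a one-dimensional endomorphism algebra is local.

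The main obstacle I anticipate is not conceptual but combinatorial: the odd--odd parity shifts which positions carry residue $0$ versus $1$, so the precise indices of the $\psi_r$ used at each step, and the verification that each chosen $\psi_r$ genuinely satisfies the hypotheses of Lemma \ref{killvt} (i.e. that $r$ lies in the leg of the survivors with $r+1$ in the arm, or that $r,\dots,n$ all lie in the arm), require care. In particular I must check at the base step that the residue-$0$ node at the bottom of the leg behaves correctly, and confirm that swapping $i\leftrightarrow i+1$ via $\psi_i$ always produces a genuinely new standard tableau outside $\mathscr{D}$ rather than accidentally re-entering $\mathscr{D}$; establishing this compatibility of parities is where the bulk of the routine-but-delicate checking will reside.
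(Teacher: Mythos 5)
Your proposal rests on a false premise. You assert that because $n=a+b$ is even, $i_\lambda=0101\dots01$ has ``the clean alternating form'' and Lemma \ref{D_i} applies. But the residue sequence is governed by the shape, not by the parity of $n$: as noted in the paper just before Section 4, when $b$ is odd one has $i_\lambda=0101\dots011010\dots10$ with a repeated $1$ in positions $b+1$ and $b+2$ (the node $(b+1,1)$ has residue $-b\equiv 1$, not $0$ --- your computation of that residue is also off). Lemma \ref{D_i} is stated and proved only under the hypothesis that $b$ is even, and its conclusion (that $2i$ and $2i{+}1$ always travel together) does not transfer verbatim to the odd--odd case, where the repetition at positions $b+1,b+2$ changes which entries are forced to pair up. Consequently the whole domino bookkeeping, the identification of $\mathscr{D}$, and the choice of peeling generators $\psi_r$ would all have to be re-derived from scratch; your proposal defers exactly this to ``routine-but-delicate checking,'' but it is precisely where the argument currently has no content.

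You have also missed a much shorter route, which is the one the paper takes: the conjugate partition $(a,1^b)'=(b+1,1^{a-1})$ has both parameters even when $a$ and $b$ are both odd, so Theorem \ref{a/b evens} applies to $S_{\lambda'}$, and Theorem \ref{conjugate} (via \cite[Theorem 3.5]{dj2}, which relates $S_\lambda$ and $S_{\lambda'}$ by a twist preserving the endomorphism algebra) transfers the conclusion back to $S_\lambda$. A direct peeling argument in the odd--odd case may well be feasible after the combinatorics of $e(i_\lambda)S_\lambda$ are correctly worked out, but as written your proof is not valid, and the two-line conjugation argument makes that effort unnecessary.
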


\begin{proof}
The result follows from Theorem \ref{a/b evens} by application of Theorem \ref{conjugate}.
\end{proof}

\section{KLR actions on $\mathscr{D}$ when $n$ is odd}

When $n$ is odd, much more work must be done. By Theorem \ref{conjugate}, we can assume throughout this section that $b<n/2$.

Using Theorem \ref{branching}, we can focus on the case where $a$ is odd and $b$ is even, as it is slightly easier to work with. The case where $a$ is even and $b$ is odd will then follow.

Recall that $\mathscr{D}=\{v_T \ | \ T\in\Dom(\lambda)\}$ is a basis of $e(i_\lambda)S_\lambda$. At this point we introduce some new notation which is much needed to keep things tidy!

\begin{defn}
We define $\Psi_j := \psi_j \psi_{j+1} \psi_{j-1} \psi_j$. For $3\leqslant x\leqslant y\leqslant n-2$ two odd integers, we then define:

\[\Psichaind{y}{x}:=\Psi_y\Psi_{y-2}\dots\Psi_x\quad\text{and}\quad\Psichainu{x}{y}:=\Psi_x\Psi_{x+2}\dots\Psi_{y}.\]
If $y<x$ we consider both of the above defined terms to be the identity element of our field.
\end{defn}

\begin{rem}
Given some $T\in\Dom(\lambda)$, let $2d$ be the number of entries in the leg of $T$ which differ from the entries in the corresponding nodes of $T_\lambda$. Notice that these will consist of the final $d$ dominoes in the leg, since $T\in\std(\lambda)$.

Let $j_1',\dots,j_d'$ be the odd numbers (in ascending order) in the $d$ dominoes in the leg of $T$ which differ from the corresponding entries in $T_\lambda$ and define $j_i:=j_i'-2$ for each $i$. For example, if $\lambda=(7,1^6)$ then \[T_\lambda=\young(189\ten\eleven\twelve\thirteen,2,3,4,5,6,7).\quad\text{ Let }\quad T=\young(14567\ten\eleven,2,3,8,9,\twelve,\thirteen).\]
Then $d=2$ and we see that $j_1=7$ and $j_2=11$. 

Now, we can see that $v_T$ can be written as the reduced expression \[\Psichaind{j_1}{b+3-2d}\Psichaind{j_2}{b+5-2d}\dots\Psichaind{j_d}{b+1}z.\] We will refer to this as the \emph{normal form} for $v_T$. Notice that $j_{i+1}>j_i$ for all $i=1,\dots,d-1$. It will be useful to note that if $v_T\in\mathscr{D}$ is in our normal form, then any expression obtained from it by deleting $\Psi$ terms from the left is also an element in $\mathscr{D}$.
\end{rem}

\begin{defn}
Let $T\in\Dom(\lambda)$. We define the length $r(T)$ of $T$ to be the number of $\Psi$ terms in the normal form of $v_T$.
\end{defn}

In the next three results, we examine the actions of the generators of $\mathscr{H}$ on the elements of $\mathscr{D}$.

\begin{lem}\label{commuting}
\begin{align*}
e(i_\lambda)\Psi_j&=\Psi_je(i_\lambda) & \text{for all $j$,}\\
y_k\Psi_j&=\Psi_jy_k & \text{for all $k\geqslant j+3$ and for all $k\leqslant j-2$,}\\
\psi_k\Psi_j&=\Psi_j\psi_k & \text{for all $k\geqslant j+3$ and for all $k\leqslant j-3$.}
\end{align*}
\end{lem}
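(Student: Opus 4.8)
The plan is to treat all three identities uniformly: each asserts that a single generator ($e(i_\lambda)$, $y_k$, or $\psi_k$) commutes with the four-factor product $\Psi_j = \psi_j\psi_{j+1}\psi_{j-1}\psi_j$. In each case I would push that generator through the four factors one at a time, using only the basic KLR relations, and then observe that the stated index ranges are precisely those for which every one of the four individual commutations is clean.

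For the second identity I would use the relation $y_s\psi_r = \psi_r y_s$, valid whenever $s\neq r, r+1$, applied to each factor of $\Psi_j$. The indices $r$ occurring in $\Psi_j$ are $j-1$, $j$ and $j+1$, so $y_k$ commutes with every factor precisely when $k\notin\{j-1,j,j+1,j+2\}$, that is, when $k\leqslant j-2$ or $k\geqslant j+3$; commuting with each factor then gives commutation with the product. The third identity is identical in spirit, now using $\psi_s\psi_r=\psi_r\psi_s$ for $s\neq r\pm1$: since $\psi_k$ fails to commute with $\psi_r$ exactly when $|k-r|=1$, the \emph{forbidden} values are $k\in\{j-2,j-1,j,j+1,j+2\}$, leaving $k\leqslant j-3$ or $k\geqslant j+3$. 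The asymmetry at the lower end (here $j-3$, whereas for $y_k$ it was $j-2$) is exactly what this bookkeeping produces.

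The only identity requiring genuine computation is the first. Here I would move $e(i_\lambda)$ rightward through $\Psi_j$ using $e(i)\psi_r=\psi_r e(s_r\cdot i)$, so that after passing all four factors the idempotent label becomes $e(w\cdot i_\lambda)$ with $w=s_js_{j-1}s_{j+1}s_j$. The claim is therefore equivalent to $w\cdot i_\lambda=i_\lambda$. Since we are in the case $b$ even, $i_\lambda=0101\dots01$ is alternating, and $\Psi_j$ only disturbs the four entries in positions $j-1,j,j+1,j+2$; a direct check shows $w$ sends the entries $(a,b,c,d)$ in these positions to $(c,d,a,b)$, and this equals $(a,b,c,d)$ because positions of equal parity carry equal residue. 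This holds for both parities of $j$, which matches the unrestricted "for all $j$" in the statement.

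I do not expect a real obstacle, since the whole lemma is routine manipulation of the KLR relations. The one place to be careful is this first identity, where I must track the permutation of the idempotent label correctly and confirm it fixes $i_\lambda$: getting the order of the four transpositions right, and hence the map $(a,b,c,d)\mapsto(c,d,a,b)$, is the only step at which an ordering error could creep in.
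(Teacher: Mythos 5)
Your argument is correct and is exactly the verification the paper leaves implicit (its proof is just ``clear from the definition of $\Psi_j$ and the defining relations''): the index bookkeeping for the $y_k$ and $\psi_k$ cases matches the stated ranges, and for the idempotent you correctly obtain $w=s_js_{j-1}s_{j+1}s_j$ acting as $(a,b,c,d)\mapsto(c,d,a,b)$, which fixes the alternating sequence $i_\lambda$ in the relevant case $b$ even.
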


\begin{proof}
Clear from the definition of $\Psi_j$ and the defining relations.
\end{proof}

\begin{prop} \label{y-actions}
Suppose $T\in\Dom(\lambda)$. Then
\begin{align*}
y_kv_T&=0\text{ for all $k$}; & (A)\\
\psi_kv_T&=0\text{ for all even $k$}; & (B)\\
\psi_1v_T&=0. & (C)
\end{align*}
\end{prop}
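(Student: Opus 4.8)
The plan is to prove (A), (B) and (C) simultaneously by induction on the length $r(T)$, using the normal form together with the observation (recorded in the remark preceding the definition of $r(T)$) that deleting the leftmost $\Psi$-factor from $v_T$ yields another basis element $v_{T'}\in\mathscr{D}$ with $r(T')=r(T)-1$. Thus I write $v_T=\Psi_j v_{T'}$ with $j:=j_1$ odd and $3\leqslant j\leqslant n-2$, and assume (A), (B), (C) for $v_{T'}$. The base case $r(T)=0$ is immediate: $v_T=z$, so $y_kz=0$ is a defining relation, while $\psi_kz=0$ for every $k\neq b+1$ gives (B) (each even $k$ differs from the odd number $b+1$, since $b$ is even) and (C) (here $1\neq b+1$ as $b\geqslant2$).

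For the inductive step I would first dispose of the ``far'' cases using Lemma \ref{commuting}. If the index of the acting generator lies outside $\{j-2,\dots,j+2\}$ it commutes past $\Psi_j$, and I finish by applying the inductive hypothesis to $v_{T'}$; this settles (A) for $k\leqslant j-2$ or $k\geqslant j+3$, (B) for even $k\leqslant j-3$ or $k\geqslant j+3$, and (C) whenever $j\geqslant5$. What remains is a short list of ``close'' cases: $k\in\{j-1,j,j+1,j+2\}$ for (A), the even values $k\in\{j-1,j+1\}$ for (B), and the single boundary value $j=3$ for (C).

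In each close case I would push the offending generator rightward through the four letters $\psi_j\psi_{j+1}\psi_{j-1}\psi_j$ of $\Psi_j$, one crossing at a time, via the KLR relations. The key simplification is that $i_\lambda$ strictly alternates, so $(i_{j-1},i_j,i_{j+1},i_{j+2})=(1,0,1,0)$; reading $\Psi_j$ from the right, this pattern passes through the intermediate state $(1,1,0,0)$ and returns to $(1,0,1,0)$, which fixes unambiguously at every crossing which branch of the $y\psi$-commutation (clean or with a $\pm1$) and of the braid relation (clean or with the $y_r-2y_{r+1}+y_{r+2}$ term) applies. Every correction term produced is then of one of three shapes --- $\Psi_j$ times some $y_\ell$, a bare $y_\ell$, or a word whose rightmost letter is $\psi_{\text{even}}$ or $\psi_1$ --- and each is annihilated on $v_{T'}$ by the length-$(r-1)$ hypothesis. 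For instance, pushing $y_{j+1}$ through gives $y_{j+1}\Psi_j e(i_\lambda)=\Psi_j y_{j-1}e(i_\lambda)+\psi_j\psi_{j+1}\psi_j e(i_\lambda)$; the first summand dies by (A), and the braid relation rewrites the second as $\psi_{j+1}\psi_j\psi_{j+1}v_{T'}$ plus $y$-terms, which vanishes because $\psi_{j+1}v_{T'}=0$ by (B).

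The main obstacle is exactly this close-case bookkeeping: there is no conceptual difficulty, but one must carry out each push in the correct residue context, since selecting the wrong branch of a relation would be fatal, and one must check that every correction genuinely lands in the span of expressions killed by the inductive hypothesis. I expect the most delicate point to be the boundary case $j=3$ of (C) --- which corresponds to the small leg $b=2$ singled out as needing extra care in the introduction --- where $\psi_1$ cannot be commuted past $\Psi_3$ and one must instead expand $\psi_1\Psi_3 v_{T'}$ and reduce it using the $\psi^2$ and braid relations together with the hypothesis. Once all correction terms are accounted for, the three induction statements close simultaneously and (A), (B), (C) hold for every $T\in\Dom(\lambda)$.
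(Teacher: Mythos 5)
Your treatment of (A) and (B) is essentially the paper's own proof: the same simultaneous induction on $r(T)$, the same peeling off of the leftmost factor $\Psi_{j_1}$, the same division into far cases (handled by Lemma \ref{commuting}) and close cases, and the same residue bookkeeping through the word $\psi_j\psi_{j+1}\psi_{j-1}\psi_j$, with every correction term ending in some $y_\ell$ or some $\psi_{\text{even}}$ applied to the shorter basis element. That part of the plan is sound and matches the paper step for step.

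The gap is in (C), at exactly the point you flag as delicate. When $j_1=3$ you have $v_T=\Psi_3v_{T'}$, and since $\psi_1$ commutes with $\psi_3$ and $\psi_4$ you get $\psi_1v_T=\psi_3\psi_4\,\psi_1\psi_2\psi_3\,v_{T'}$. The word $\psi_1\psi_2\psi_3$ is already reduced, so no quadratic or braid relation applies to it, and the resulting term is not of any of the three shapes you list: its rightmost letter is $\psi_3$, which is odd and different from $1$, so none of $(A)$, $(B)$, $(C)$ for $v_{T'}$ annihilates it. Your induction therefore does not close. The paper does not prove (C) by this induction at all; instead it lets the word grow as it is pushed rightward, using
\[\psi_1\psi_2\dots\psi_i\Psichaind{j}{i+2}=\Psichaind{j}{i+4}\psi_{i+2}\psi_{i+3}(\psi_1\psi_2\dots\psi_{i+2}),\]
so that after passing through all $d=b/2$ chains one is left with a multiple of $\psi_1\psi_2\cdots\psi_{b+1}z$, which vanishes by the Garnir relation --- a Specht relation your argument never invokes. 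Some input beyond the inductive hypothesis is genuinely needed here; the quickest fix is to note that $s_1s_2s_3\cdot i_\lambda=(1,0,1,0,\dots)$ begins with a $1$, so $\psi_1\psi_2\psi_3e(i_\lambda)=e(s_1s_2s_3\cdot i_\lambda)\psi_1\psi_2\psi_3=0$ by the cyclotomic relation, but either this observation or the paper's Garnir argument must be supplied before the proof of (C) is complete.
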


\begin{proof}
Let $(A_r)$ denote the statement that $(A)$ holds for all $T$ with $r(T)=r$, and define $(B_r)$ similarly. We first prove $(A_r)$ and $(B_r)$ simultaneously, by induction on $r$.

First we must show that $(A_0)$ and $(B_0)$ hold. In this case, $v_T=z$ and the defining relations give our result immediately.

Now, let $v_T=\Psichaind{j_1}{b+3-2d}\Psichaind{j_2}{b+5-2d}\dots\Psichaind{j_d}{b+1}z$ be in normal form for some $d$, and define \[v_{T_{(2)}}:=\Psichaind{j_2}{b+5-2d}\dots\Psichaind{j_d}{b+1}z.\]

We will show that $(A_{r-1}) \& (B_{r-1})\Rightarrow (A_r)$. We split our problem into 5 cases:

\begin{enumerate}
\item $k=j_1+2$,
\item $k=j_1+1$,
\item $k=j_1$,
\item $k=j_1-1$,
\item All other $k$.
\end{enumerate}

We can now solve each case quite simply!

\begin{enumerate}
{\item\abovedisplayskip=0pt\abovedisplayshortskip=0pt~\vspace*{-\baselineskip}
\begin{align*}
y_{j_1+2}v_T&=\psi_{j_1}\psi_{j_1-1}(y_{j_1+2}\psi_{j_1+1}e(s_{j_1}\cdot i_\lambda))\psi_{j_1}\Psichaind{j_1-2}{b+3-2d}v_{T_{(2)}}\\
&=\psi_{j_1}\psi_{j_1-1}(\psi_{j_1+1}y_{j_1+1}+1)\psi_{j_1}\Psichaind{j_1-2}{b+3-2d}v_{T_{(2)}}\\
&=\Psi_{j_1}\underbrace{y_{j_1}\Psichaind{j_1-2}{b+3-2d}v_{T_{(2)}}}_{=0\text{ by }(A_{r-1})} +\psi_{j_1}\psi_{j_1-1}\psi_{j_1}\Psichaind{j_1-2}{b+3-2d}v_{T_{(2)}}\\
&=(\psi_{j_1-1}\psi_{j_1}\psi_{j_1-1}-y_{j_1-1}+2y_{j_1}-y_{j_1 +1})\Psichaind{j_1-2}{b+3-2d}v_{T_{(2)}}\\
&=\psi_{j_1-1}\psi_{j_1}\underbrace{\psi_{j_1-1}\Psichaind{j_1-2}{b+3-2d}v_{T_{(2)}}}_{=0\text{ by } (B_{r-1})}-\underbrace{y_{j_1-1}\Psichaind{j_1-2}{b+3-2d}v_{T_{(2)}}}_{=0\text{ by }(A_{r-1})}\\
&\quad+2\underbrace{y_{j_1}\Psichaind{j_1-2}{b+3-2d}v_{T_{(2)}}}_{=0\text{ by }(A_{r-1})}-\underbrace{y_{j_1+1}\Psichaind{j_1-2}{b+3-2d}v_{T_{(2)}}}_{=0\text{ by }(A_{r-1})}\\
&=0.
\end{align*}}
{\item\abovedisplayskip=0pt\abovedisplayshortskip=0pt~\vspace*{-\baselineskip}
\begin{align*}
y_{j_1+1}v_T&=(y_{j_1+1}\psi_{j_1}e(s_{j_1}\cdot i_\lambda))\psi_{j_1+1}\psi_{j_1-1}\psi_{j_1}\Psichaind{j_1-2}{b+3-2d}v_{T_{(2)}}\\
&=\psi_{j_1}\psi_{j_1+1}(y_{j_1}\psi_{j_1-1}e(s_{j_1}\cdot i_\lambda))\psi_{j_1}\Psichaind{j_1-2}{b+3-2d}v_{T_{(2)}}\\
&= \psi_{j_1}\psi_{j_1+1}(\psi_{j_1-1}y_{j_1-1}+1)\psi_{j_1}\Psichaind{j_1-2}{b+3-2d}v_{T_{(2)}}\\
&= \Psi_{j_1}\underbrace{y_{j_1-1}\Psichaind{j_1-2}{b+3-2d}v_{T_{(2)}}}_{=0 \text{ by }(A_{r-1})}+\psi_{j_1}\psi_{j_1+1}\psi_{j_1}\Psichaind{j_1-2}{b+3-2d}v_{T_{(2)}}\\
&=(\psi_{j_1+1}\psi_{j_1}\psi_{j_1+1}+y_{j_1}-2y_{j_1+1}+y_{j_1+2})\Psichaind{j_1-2}{b+3-2d}v_{T_{(2)}}\\
&=0\quad\text{by $(A_{r-1})$ and $(B_{r-1})$.}
\end{align*}}
{\item\abovedisplayskip=0pt\abovedisplayshortskip=0pt~\vspace*{-\baselineskip}
\begin{align*}
y_{j_1}v_T&=(y_{j_1}\psi_{j_1}e(s_{j_1}\cdot i_\lambda))\psi_{j_1+1}\psi_{j_1-1}\psi_{j_1}\Psichaind{j_1-2}{b+3-2d}v_{T_{(2)}}\\
&=\psi_{j_1}(y_{j_1+1}\psi_{j_1+1}e(s_{j_1}\cdot i_\lambda))\psi_{j_1-1}\psi_{j_1}\Psichaind{j_1-2}{b+3-2d}v_{T_{(2)}}\\
&=\psi_{j_1}(\psi_{j_1+1}y_{j_1+2}-1)\psi_{j_1-1}\psi_{j_1}\Psichaind{j_1-2}{b+3-2d}v_{T_{(2)}}\\
&=\Psi_{j_1}\underbrace{y_{j_1+2}\Psichaind{j_1-2}{b+3-2d}v_{T_{(2)}}}_{=0\text{ by }(A_{r-1})}\\
&\quad-(\psi_{j_1-1}\psi_{j_1}\psi_{j_1-1}-y_{j_1-1}+2y_{j_1}-y_{j_1+1})\Psichaind{j_1-2}{b+3-2d}v_{T_{(2)}}\\
&=0\quad\text{by $(A_{r-1})$ and $(B_{r-1})$.}
\end{align*}}
{\item\abovedisplayskip=0pt\abovedisplayshortskip=0pt~\vspace*{-\baselineskip}
\begin{align*}
y_{j_1-1}v_T&=\psi_{j_1}\psi_{j_1+1}(y_{j_1-1}\psi_{j_1-1}e(s_{j_1}\cdot i_\lambda))\psi_{j_1}\Psichaind{j_1-2}{b+3-2d}v_{T_{(2)}}\\
&=\psi_{j_1}\psi_{j_1+1}(\psi_{j_1-1}y_{j_1}-1)\psi_{j_1}\Psichaind{j_1-2}{b+3-2d}v_{T_{(2)}}\\
&=\Psi_{j_1}\underbrace{y_{j_1+1}\Psichaind{j_1-2}{b+3-2d}v_{T_{(2)}}}_{=0\text{ by }(A_{r-1})}\\
&\quad-(\psi_{j_1+1}\psi_{j_1}\psi_{j_1+1}+y_{j_1}-2y_{j_1+1}+y_{j_1+2})\Psichaind{j_1-2}{b+3-2d}v_{T_{(2)}}\\
&=0\quad\text{by $(A_{r-1})$ and $(B_{r-1})$.}
\end{align*}}
\item Now suppose $k\neq j_1+2,j_1+1,j_1$ or $j_1-1$. Then
\begin{align*}
y_k v_T&=\Psi_{j_1}y_k\Psichaind{j_1-2}{b+3-2d}v_{T_{(2)}}\quad\text{by Lemma \ref{commuting}}\\
&=0\quad\text{by $(A_{r-1})$.}
\end{align*}
\end{enumerate}

Next, we show that $(A_{r-1})\&(B_{r-1})\Rightarrow(B_r)$. Once again we split this into the following cases:

\begin{enumerate}
\item $k=j_1+1$,
\item $k=j_1-1$,
\item All other $k$.
\end{enumerate}

\begin{enumerate}
{\item\abovedisplayskip=0pt\abovedisplayshortskip=0pt~\vspace*{-\baselineskip}
\begin{align*}
\psi_{j_1+1}v_T&=(\psi_{j_1+1}\psi_{j_1}\psi_{j_1+1}e(s_{j_1}\cdot i_\lambda))\psi_{j_1-1}\psi_{j_1}\Psichaind{j_1-2}{b+3-2d}v_{T_{(2)}}\\
&=\psi_{j_1}\psi_{j_1+1}(\psi_{j_1}\psi_{j_1-1}\psi_{j_1}e(i_\lambda))\Psichaind{j_1-2}{b+3-2d}v_{T_{(2)}}\\
&=\psi_{j_1}\psi_{j_1+1}(\psi_{j_1-1}\psi_{j_1}\psi_{j_1-1}-y_{j_1-1}+2 y_{j_1}-y_{j_1+1})\Psichaind{j_1-2}{b+3-2d}v_{T_{(2)}}\\
&=0\quad\text{by $(A_{r-1})$ and $(B_{r-1})$.}
\end{align*}}
{\item\abovedisplayskip=0pt\abovedisplayshortskip=0pt~\vspace*{-\baselineskip}
\begin{align*}
\psi_{j_1-1}v_T&=(\psi_{j_1-1}\psi_{j_1}\psi_{j_1-1}e(s_{j_1}\cdot i_\lambda))\psi_{j_1+1}\psi_{j_1}\Psichaind{j_1-2}{b+3-2d}v_{T_{(2)}}\\
&=\psi_{j_1}\psi_{j_1-1}(\psi_{j_1}\psi_{j_1+1}\psi_{j_1}e(i_\lambda))\Psichaind{j_1-2}{b+3-2d}v_{T_{(2)}}\\
&=\psi_{j_1}\psi_{j_1-1}(\psi_{j_1+1}\psi_{j_1}\psi_{j_1+1}+y_{j_1}-2y_{j_1+1}+y_{j_1+2})\Psichaind{j_1-2}{b+3-2d}v_{T_{(2)}}\\
&=0\quad\text{by $(A_{r-1})$ and $(B_{r-1})$.}
\end{align*}}
\item Now suppose $k$ is even but $k\neq j_1+1$ or $j_1-1$. Then
\begin{align*}
\psi_kv_T&=\Psi_{j_1}\psi_k\Psichaind{j_1-2}{b+3-2d}v_{T_{(2)}}\quad\text{by Lemma \ref{commuting}}\\
&=0\quad\text{by $(B_{r-1})$.}
\end{align*}
\end{enumerate}
And so, our results follow.

Now, we prove $(C)$. If $d<b/2$, then $\Psi_3$ does not occur in $v_T$, and so $\psi_1$ commutes with all $\Psi$ terms in $v_T$ and the result is clear. So suppose $d=b/2$. Then $v_T=\Psichaind{j_1}{3}\Psichaind{j_2}{5}\dots\Psichaind{j_d}{b+1}z$. It's easy to see that \[\psi_1\psi_2\dots\psi_i\Psichaind{j_{(i+2d-b+1)/2}}{i+2}=\Psichaind{j_{(i+2d-b+1)/2}}{i+4}\psi_{i+2}\psi_{i+3}(\psi_1\psi_2\dots\psi_{i+2}).\] 

Applying this for $i=1,3,\dots,b-1$ in turn, we obtain \[\psi_1v_T=\Psichaind{j_1}{5}\Psichaind{j_2}{7}\dots\Psichaind{j_d}{b+3}\psi_3\psi_4\dots\psi_{b+2}\psi_1\psi_2\dots\psi_{b+1}z,\] which is zero in view of the Garnir relation $\psi_1\psi_2\dots\psi_{b+1}z=0$.\qedhere
\end{proof}

\begin{rem}
A shorter but less direct proof of $(A_r)$ and $(B_r)$ can be given using the grading on $\mathscr{H}$ and $S_\lambda$, closely mimicking the proof of \cite[Lemma 4.4]{kmr}.
\end{rem}

\begin{lem}\label{psi-actions}
Suppose $j$ is odd and $T\in\Dom(\lambda)$. Then

\begin{enumerate}
\item$\psi_j\Psi_jv_T=-2\psi_jv_T$,
\item$\psi_{j}\Psi_{j+2}\Psi_{j}v_T=\psi_jv_T$,
\item$\Psi_j\psi_{j+2}v_T=0$,
\item$\psi_j\Psi_{j-2}\Psi_jv_T=\psi_jv_T$,
\item$\Psi_j\psi_{j-2}v_T=0$.
\end{enumerate}
\end{lem}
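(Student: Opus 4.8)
The five identities are all local relations supported near position $j$, and I would prove them by the same mechanism: rewrite the left-hand side using the KLR relations until every term either has a $y$-generator or an ``even'' $\psi$-generator sitting directly on $v_T$, and then kill those terms with Proposition \ref{y-actions}. The common input is the shape of the residue sequence: since $a$ is odd and $b$ is even, $i_\lambda=0101\dots$, so for odd $j$ the residues at positions $j-1,j,j+1,j+2,j+3$ read $1,0,1,0,1$. In particular $j\pm1$ are even, so $\psi_{j\pm1}v_T=0$, while $y_kv_T=0$ for all $k$ and $\psi_1v_T=0$. The only relations that genuinely alter a reduced $\psi$-word are the quadratic relations $\psi_r^2e(i)=0$ (equal residues) or $-(y_r-y_{r+1})^2e(i)$ (distinct residues), the braid relations, and the $y$--$\psi$ commutations, the last of which contribute $\pm1$ correction terms exactly when the two relevant residues coincide. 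Throughout, Lemma \ref{commuting} lets me slide $\Psi$-blocks and far-away generators past one another freely.

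For (1) the plan is most concrete. Writing $\psi_j\Psi_jv_T=\psi_j^2\,\psi_{j+1}\psi_{j-1}\psi_jv_T$, I first note that $\psi_{j+1}\psi_{j-1}\psi_jv_T$ has residue sequence $s_{j+1}s_{j-1}s_j\cdot i_\lambda$, which differs in positions $j$ and $j+1$; hence the quadratic relation gives
\[\psi_j\Psi_jv_T=-(y_j-y_{j+1})^2\,\psi_{j+1}\psi_{j-1}\psi_jv_T.\]
I would then push the operator $(y_j-y_{j+1})^2$ rightwards, one generator at a time, onto $v_T$. At each crossing either the residues differ, so the $y$ passes through unchanged in form, or they agree (precisely at the two spots where the sequence reads $\dots 0\,0\dots$), producing a $\mp1$ correction that deletes a generator. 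Every surviving term still carrying a $y$ on $v_T$ vanishes by Proposition \ref{y-actions}(A), and the two correction terms each contribute $+\psi_jv_T$, so that $(y_j-y_{j+1})^2\psi_{j+1}\psi_{j-1}\psi_jv_T=2\psi_jv_T$ and hence $\psi_j\Psi_jv_T=-2\psi_jv_T$. This computation is the template for everything else.

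Identities (4) and (5) are the mirror images of (2) and (3) under the reflection $j+2\leftrightarrow j-2$, $j+1\leftrightarrow j-1$, which is a symmetry of both the palindromic residue pattern $1,0,1,0,1$ and (locally) the defining relations, so it suffices to treat (2) and (3). For (3) I would first observe that $\Psi_j\psi_{j+2}v_T=\psi_j\psi_{j+1}\psi_{j+2}\psi_{j-1}\psi_jv_T$, and that the underlying permutation $s_js_{j+1}s_{j+2}s_{j-1}s_j$ is already reduced, with no braid triple or repeated adjacent index available. Consequently no $y$-terms can ever be produced, and the vanishing cannot come from a collapse inside the $\psi$-word itself; it must come from the module relations. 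The plan is therefore to expand $v_T$ in its normal form and commute the five generators down towards $z$ (using Lemma \ref{commuting} and the braid relations to rearrange), until either an even generator $\psi_m$ lands on $z$, giving $\psi_mz=0$, or the Garnir word $\psi_1\psi_2\cdots\psi_{b+1}z=0$ is exposed. For (2) I would combine the $\psi_j^2$ and braid reductions with (1) and (3): after resolving $\Psi_{j+2}\Psi_j$ I expect a $-2$ from a type-(1) collision together with a cancelling type-(3) term to leave the single surviving $\psi_jv_T$.

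The hard part will be the bookkeeping of residues and scalars rather than any single relation. Tracking how the residue sequence mutates under each successive $\psi$ is what decides which form of the quadratic and commutation relations applies, and it is easy to drop a sign; the verification that the corrections in (1) sum to exactly $-2$ (and to $+1$ in (2) and (4)) is delicate for this reason. The genuinely different difficulty lies in (3) and (5): because the relevant $\psi$-word is reduced, the required vanishing is not formal and must be extracted from the Specht presentation, so the main obstacle is to organise the reduction of $\Psi_j\psi_{j\pm2}v_T$ to the generator $z$ cleanly enough that a factor $\psi_mz$ with $m\neq b+1$, or the Garnir relation, becomes visible.
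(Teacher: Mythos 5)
Your treatment of part (1) matches the paper's: apply the quadratic relation $\psi_j^2e(i)=-(y_j-y_{j+1})^2e(i)$ (the residues at positions $j,j+1$ of $s_j\cdot i_\lambda$ do differ), push the $y$'s rightwards collecting the $\pm1$ corrections from the equal-residue commutations, and kill everything carrying a $y$ or an even-indexed $\psi$ on $v_T$ via Proposition \ref{y-actions}; the total $-2$ is correct. Your plan for (2) -- namely that resolving $\psi_j\Psi_{j+2}\Psi_j$ leaves $\psi_jv_T$ plus a term that dies by (3) -- is exactly the paper's identity $\psi_j\Psi_{j+2}\Psi_jv_T=\psi_jv_T+\Psi_{j+2}\Psi_j\psi_{j+2}v_T$, and likewise for (4) and (5). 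So the architecture (reduce (2),(4) to (3),(5)) is right.

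There are, however, two genuine gaps in your treatment of (3) and (5). First, the claimed mirror symmetry $j+2\leftrightarrow j-2$ does not reduce (5) to (3): the Specht presentation is not reversal-symmetric ($\psi_jz=0$ only for $j\neq b+1$, the Garnir word $\psi_1\cdots\psi_{b+1}$ is one-sided, and the normal form $\Psichaind{j_1}{b+3-2d}\cdots\Psichaind{j_d}{b+1}z$ is built from descending chains), and indeed the paper proves the two statements with genuinely different case analyses (the relevant positions of $j_1$ are $j+4,j+2,j,j-2$ for one and $j+2,j,j-2,j-4$ for the other, with different subcases on $j_2$). Second, and more seriously, ``commute the five generators down towards $z$ until an even $\psi_m$ or the Garnir word appears'' is not a workable mechanism: once $\Psi_j\psi_{j\pm2}$ meets a $\Psi_{j_1}$ with $j_1\in\{j-4,\dots,j+4\}$, braid relations with unequal residues fire and produce $y$-correction terms and shorter words, and there is no visible termination. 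The paper's proof instead runs a simultaneous induction on the length $r(T)$ of the normal form, proving (3) and (5) together: the base case $r=0$ is where $\psi_jz=0$ or $\psi_{j\pm2}z=0$ is used (the Garnir relation is not needed here at all), and every inductive step strips one $\Psi_{j_1}$ using part (1), the identities $\psi_j\Psi_{j\pm2}\Psi_jv_T=\psi_jv_T+\Psi_{j\pm2}\Psi_j\psi_{j\pm2}v_T$, and the inductive hypotheses for \emph{both} statements at lengths $r-1$ and $r-2$. Without that induction and the mutual recursion between (3) and (5), your reduction does not close up.
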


\needspace{3em}
\begin{proof}\indent
\begin{enumerate}
\vspace{-\topsep}
\Item
\begin{align*}
\psi_j\Psi_je(i_\lambda)v_T&=(\psi_j^2 e(s_j \cdot i_\lambda))\psi_{j+1}\psi_{j-1}\psi_jv_T\\
&=(-y_j^2-y_{j+1}^2+2y_jy_{j+1})\psi_{j+1}\psi_{j-1}\psi_jv_T\\
&=-\psi_{j+1}y_j(y_j\psi_{j-1}e(s_j\cdot i_\lambda))\psi_jv_T\\
&\quad-y_{j+1}(y_{j+1}\psi_{j+1}e(s_j\cdot i_\lambda))\psi_{j-1}\psi_jv_T\\
&\quad+2y_j(y_{j+1}\psi_{j+1}e(s_j\cdot i_\lambda))\psi_{j-1}\psi_jv_T\\
&=-\psi_{j+1}y_j(\psi_{j-1}y_{j-1}+1)\psi_jv_T\\
&\quad-y_{j+1}(\psi_{j+1}y_{j+2}-1)\psi_{j-1}\psi_jv_T\\
&\quad+2y_j(\psi_{j+1}y_{j+2}-1)\psi_{j-1}\psi_jv_T\\
&=-\psi_{j+1}y_j\psi_jv_T+\psi_{j-1}y_{j+1}\psi_jv_T-2(y_j\psi_{j-1}e(s_j\cdot i_\lambda))\psi_j v_T\\
&=-\psi_{j+1}\psi_jy_{j+1}v_T+\psi_{j-1}\psi_jy_jv_T-2(\psi_{j-1}y_{j-1}+1)\psi_jv_T\\
&=-2\psi_jv_T.
\end{align*}
\item[2.,3.,4.\& 5.] We have \begin{align*}
\psi_j\Psi_{j+2}\Psi_je(i_\lambda)v_T&=\psi_{j+2}\psi_{j+3}(\psi_j\psi_{j+1}\psi_je(s_{j+2}\cdot s_j\cdot i_\lambda))\psi_{j+2}\psi_{j+1}\psi_{j-1}\psi_j\\
&=\psi_{j+2}\psi_{j+3}(\psi_{j+1}\psi_j\psi_{j+1}+y_j-2y_{j+1}+y_{j+2})\psi_{j+2}\psi_{j+1}\psi_{j-1}\psi_jv_T\\
&=\psi_{j+2}\psi_{j+3}\psi_{j+1}\psi_j(\psi_{j+1}\psi_{j+2}\psi_{j+1}e(s_j \cdot i_\lambda))\psi_{j-1}\psi_jv_T\\
&\quad+\psi_{j+2}\psi_{j+3}\psi_{j+2}\psi_{j+1}(y_j\psi_{j-1}e(s_j\cdot i_\lambda))\psi_jv_T\\
&\quad-2\psi_{j+2}\psi_{j+3}\psi_{j+2}(y_{j+1}\psi_{j+1}e(s_j\cdot i_\lambda))\psi_{j-1}\psi_jv_T\\
&\quad+\psi_{j+2}\psi_{j+3}(y_{j+2}\psi_{j+2}e(s_j\cdot i_\lambda))\psi_{j+1}\psi_{j-1}\psi_j v_T\\
&=\psi_{j+2}\psi_{j+3}\psi_{j+1}\psi_j\psi_{j+2}\psi_{j+1}\psi_{j+2}\psi_{j-1}\psi_jv_T\\
&\quad+\psi_{j+2}\psi_{j+3}\psi_{j+2}\psi_{j+1}(\psi_{j-1}y_{j-1}+1)\psi_jv_T\\
&\quad-2\psi_{j+2}\psi_{j+3}\psi_{j+2}(\psi_{j+1}y_{j+2}-1)\psi_{j-1}\psi_jv_T\\
&\quad+\psi_{j+2}\psi_{j+3}\psi_{j+2}y_{j+3}\psi_{j+1}\psi_{j-1}\psi_j v_T\\
&=\Psi_{j+2}\Psi_j\psi_{j+2}v_T+\psi_{j+2}\psi_{j+3}\psi_{j+2}\psi_{j+1}\psi_{j-1}\psi_j\underbrace{y_{j-1}v_T}_{=0}\\
&\quad+(\psi_{j+2}\psi_{j+3}\psi_{j+2}e(s_j\cdot i_\lambda))\psi_{j+1}\psi_jv_T\\
&\quad-2\psi_{j+2}\psi_{j+3}\psi_{j+2}\psi_{j+1}\psi_{j-1}\psi_j\underbrace{y_{j+2}v_T}_{=0}\\
&\quad+2(\psi_{j+2}\psi_{j+3}\psi_{j+2}e(s_j\cdot i_\lambda))\psi_{j-1}\psi_jv_T\\
&\quad+\psi_{j+2}\psi_{j+3}\psi_{j+2}\psi_{j+1}\psi_{j-1}\psi_j\underbrace{y_{j+3}v_T}_{=0}\\
&=\Psi_{j+2}\Psi_j\psi_{j+2}v_T+(\psi_{j+3}\psi_{j+2}\psi_{j+3}+y_{j+2}-2y_{j+3}+y_{j+4})\psi_{j+1}\psi_jv_T\\
&\quad+2(\psi_{j+3}\psi_{j+2}\psi_{j+3}+y_{j+2}-2y_{j+3}+y_{j+4})\psi_{j-1}\psi_jv_T\\
&=\Psi_{j+2}\Psi_j\psi_{j+2}v_T+\psi_{j+3}\psi_{j+2}\psi_{j+1}\psi_j\underbrace{\psi_{j+3}v_T}_{=0}\\
&\quad+(y_{j+2}\psi_{j+1}e(s_j\cdot i_\lambda))\psi_jv_T+2\psi_{j+3}\psi_{j+2}\psi_{j-1}\psi_j\underbrace{\psi_{j+3}v_T}_{=0}\\
&=\Psi_{j+2}\Psi_j\psi_{j+2}v_T+(\psi_{j+1}y_{j+1}+1)\psi_jv_T\\
&=\Psi_{j+2}\Psi_j\psi_{j+2}v_T+\psi_jv_T+\psi_{j+1}y_{j+1}\psi_jv_T\\
&=\Psi_{j+2}\Psi_j\psi_{j+2}v_T+\psi_jv_T+\psi_{j+1}\psi_j\underbrace{y_jv_T}_{=0}.
\end{align*}

We also have \begin{align*}
\psi_j\Psi_{j-2}\Psi_je(i_\lambda)v_T&=\psi_{j-2}\psi_{j-3}(\psi_j\psi_{j-1}\psi_j e(s_{j-2}\cdot s_j\cdot i_\lambda))\psi_{j-2}\psi_{j+1}\psi_{j-1}\psi_jv_T\\
&=\psi_{j-2}\psi_{j-3}(\psi_{j-1}\psi_j\psi_{j-1}-y_{j-1}+2y_j-y_{j+1})\psi_{j-2}\psi_{j+1}\psi_{j-1}\psi_jv_T\\
&=\psi_{j-2}\psi_{j-3}\psi_{j-1}\psi_j(\psi_{j-1}\psi_{j-2}\psi_{j-1}e(s_j \cdot i_\lambda))\psi_{j+1}\psi_jv_T\\
&\quad-\psi_{j-2}\psi_{j-3}(y_{j-1}\psi_{j-2}e(s_j \cdot i_\lambda))\psi_{j+1}\psi_{j-1}\psi_jv_T\\
&\quad+2\psi_{j-2}\psi_{j-3}\psi_{j-2}\psi_{j+1}(y_j \psi_{j-1}e(s_j \cdot i_\lambda))\psi_jv_T\\
&\quad-\psi_{j-2}\psi_{j-3}\psi_{j-2}(y_{j+1}\psi_{j+1}e(s_j \cdot i_\lambda))\psi_{j-1}\psi_jv_T\\
&=\psi_{j-2}\psi_{j-3}\psi_{j-1}\psi_j\psi_{j-2}\psi_{j-1}\psi_{j-2}\psi_{j+1}\psi_jv_T\\
&\quad-\psi_{j-2}\psi_{j-3}\psi_{j-2}y_{j-2}\psi_{j+1}\psi_{j-1}\psi_jv_T\\
&\quad+2\psi_{j-2}\psi_{j-3}\psi_{j-2}\psi_{j+1}(\psi_{j-1}y_{j-1}+1)\psi_jv_T\\
&\quad-\psi_{j-2}\psi_{j-3}\psi_{j-2}(\psi_{j+1}y_{j+2}-1)\psi_{j-1}\psi_jv_T\\
&=\Psi_{j-2}\Psi_j\psi_{j-2}v_T-\psi_{j-2}\psi_{j-3}\psi_{j-2}\psi_{j+1}\psi_{j-1}\psi_j\underbrace{y_{j-2}v_T}_{=0}\\
&\quad+2(\psi_{j-2}\psi_{j-3}\psi_{j-2}e(s_j\cdot i_\lambda))\psi_{j+1}\psi_jv_T\\
&\quad+(\psi_{j-2}\psi_{j-3}\psi_{j-2}e(s_j\cdot i_\lambda))\psi_{j-1}\psi_jv_T\\
&=\Psi_{j-2}\Psi_j\psi_{j-2}v_T\\
&\quad+2(\psi_{j-3}\psi_{j-2}\psi_{j-3}-y_{j-3}+2y_{j-2}-y_{j-1})\psi_{j+1}\psi_jv_T\\
&\quad+(\psi_{j-3}\psi_{j-2}\psi_{j-3}-y_{j-3}+2 y_{j-2}-y_{j-1})\psi_{j-1}\psi_jv_T\\
&=\Psi_{j-2}\Psi_j\psi_{j-2}v_T+2(0)-(y_{j-1}\psi_{j-1}e(s_j\cdot i_\lambda))\psi_jv_T\\
&=\Psi_{j-2}\Psi_j\psi_{j-2}v_T-(\psi_{j-1}y_j-1)\psi_jv_T\\
&=\Psi_{j-2}\Psi_j\psi_{j-2}v_T-0+\psi_jv_T.
\end{align*}

So we have
\begin{align*}
\psi_{j}\Psi_{j+2}\Psi_{j}v_T&=\psi_jv_T+\Psi_{j+2}\Psi_j\psi_{j+2}v_T & (*)\\
\psi_j\Psi_{j-2}\Psi_jv_T&=\psi_jv_T+\Psi_{j-2}\Psi_j\psi_{j-2}v_T. & (**)
\end{align*}

Now all four statements will follow if we can show that 3 and 5 hold. We will proceed by proving both simultaneously by induction on $r(T)$. That is, we will prove that
\begin{align*}
\Psi_j\psi_{j+2}v_T&=0\text{ for any odd $j$ and }r(T)=r, & (A_r)\\
\Psi_j\psi_{j-2}v_T&=0\text{ for any odd $j$ and }r(T)=r, & (B_r)
\end{align*}
by simultaneous induction on $r$.

First, we prove that $(A_r)$ follows if $(A_s)$ and $(B_s)$ hold for all $s<r$.

$(A_0)$ is clearly true. We have $\Psi_j\psi_{j+2}z=\psi_j\psi_{j+1}\psi_{j-1}\psi_{j+2}\psi_jz=0$ since at least one of $\psi_j$, $\psi_{j+2}$ must annihilate $z$.

Now let $r>0$. Suppose $v_T=\Psichaind{j_1}{b+3-2d}\Psichaind{j_2}{b+5-2d}\dots\Psichaind{j_d}{b+1}z$ is in normal form and define $v_{T'}:=\Psichaind{j_1-2}{b+3-2d}\Psichaind{j_2}{b+5-2d}\dots\Psichaind{j_d}{b+1}z$.

If $j_1\geqslant j+6$ or $j_1\leqslant j-4$, then we clearly have $\Psi_j\psi_{j+2}v_T=\Psi_{j_1}\Psi_j\psi_{j+2}v_{T'}$ and our result follows by $(A_{r-1})$. So we break our proof up for the remaining four possibilities.
\begin{enumerate}
\item Suppose $j_1=j+4$. We will write $v_{T_{(2)}}:=\Psichaind{j_2}{b+5-2d}\dots\Psichaind{j_d}{b+1}z$. If $b+3-2d=j+4$ also, we have
\begin{align*}
\Psi_j\psi_{j+2}v_T&=\Psi_j\psi_{j+2}\Psi_{j+4}v_{T_{(2)}}\\
&=0,
\end{align*}

as we have a $\psi_j$ which commutes with everything to its right, given that the lowest indexed $\Psi$-term in $v_{T_{(2)}}$ is $\Psi_{j+6}$.

If $b+3-2d<j+4$ we have
\begin{align*}
\Psi_j (\psi_{j+2}\Psi_{j+4}\Psi_{j+2})\Psichaind{j}{b+3-2d}v_{T_{(2)}}&=\Psi_j(\psi_{j+2}+\Psi_{j+4}\Psi_{j+2}\psi_{j+4})\Psichaind{j}{b+3-2d}v_{T_{(2)}}\text{ by $(*)$}\\
&=0\quad\text{by $(A_s)$ for some $s<r$, as $\Psichaind{j}{b+3-2d}v_{T_{(2)}}\in\mathscr{D}$}.
\end{align*}
\item Suppose $j_1=j+2$. Then we have
\begin{align*}
\Psi_j\psi_{j+2}\Psi_{j+2}v_{T'}&=-2\Psi_j\psi_{j+2}v_{T'}\text{ by part 1,}\\
&=0\quad\text{by $(A_{r-1})$, as $r(T') = r-1$.}
\end{align*}
\item Suppose $j_1=j$. Then we have
\begin{align*}
\Psi_j\psi_{j+2}\Psi_jv_{T'}&=-2\Psi_j\psi_{j+2}v_{T'}\text{ by part 1,}\\
&=0\text{ by $(A_{r-1})$.}
\end{align*}
\item Suppose $j_1=j-2$. We will write $v_{T_{(3)}}:=\Psichaind{j_3}{b+7-2d}\dots\Psichaind{j_d}{b+1}z$. Here, we must divide into further subcases.
\begin{enumerate}
\item Suppose $j_2\geqslant j+4$. Then we have
\begin{align*}
\Psi_j\psi_{j+2}v_T&=\Psi_j\psi_{j+2}\Psichaind{j-2}{b+3-2d}\Psichaind{j_2}{j+6}\Psi_{j+4}\Psi_{j+2}\Psichaind{j}{b+5-2d}v_{T_{(3)}}\\
&=\Psichaind{j_2}{j+6}\Psi_j\Psichaind{j-2}{b+3-2d}(\psi_{j+2}\Psi_{j+4}\Psi_{j+2})\Psichaind{j}{b+5-2d}v_{T_{(3)}}\\
&=\Psichaind{j_2}{j+6}\Psi_j\Psichaind{j-2}{b+3-2d}(\psi_{j+2}+\Psi_{j+4}\Psi_{j+2}\psi_{j+4})\Psichaind{j}{b+5-2d}v_{T_{(3)}}\text{ by }(*)\\
&=\Psichaind{j_2}{j+6}\Psi_j\psi_{j+2}\Psichaind{j-2}{b+3-2d}\Psichaind{j}{b+5-2d}v_{T_{(3)}}+0\\
& \ \quad\text{by $(A_s)$ for some $s<r$, as $\Psichaind{j}{b+5-2d}v_{T_{(3)}}\in\mathscr{D}$,}\\
&=0\text{ by $(A_{s'})$ for some $s'<r$, as $\Psichaind{j-2}{b+3-2d}\Psichaind{j}{b+5-2d}v_{T_{(3)}}\in\mathscr{D}$.}
\end{align*}
\item Suppose $j_2=j+2$. Then we have
\begin{align*}
\Psi_j\psi_{j+2}v_T&=\Psi_j\psi_{j+2}\Psichaind{j-2}{b+3-2d}\Psi_{j+2}\Psichaind{j}{b+5-2d}v_{T_{(3)}}\\
&=\Psi_j\Psichaind{j-2}{b+3-2d}\psi_{j+2}\Psi_{j+2}\Psichaind{j}{b+5-2d}v_{T_{(3)}}\\
&=-2\Psi_j\Psichaind{j-2}{b+3-2d}\psi_{j+2}\Psichaind{j}{b+5-2d}v_{T_{(3)}}\text{ by part 1,}\\
&=-2\Psi_j\psi_{j+2}\Psichaind{j-2}{b+3-2d}\Psichaind{j}{b+5-2d}v_{T_{(3)}}\\
&=0\text{ by $(A_s)$ for some $s<r$, as $\Psichaind{j-2}{b+3-2d}\Psichaind{j}{b+5-2d}v_{T_{(3)}}\in\mathscr{D}$.}
\end{align*}
\item Suppose $j_2=j$. Then we have
\begin{align*}
\Psi_j\psi_{j+2}v_T&=\Psi_j\psi_{j+2}\Psichaind{j-2}{b+3-2d}\Psi_j\Psichaind{j-2}{b+5-2d}v_{T_{(3)}}\\
&=\underbrace{\psi_j\psi_{j+1}\psi_{j-1}\psi_{j+2}\psi_{j-2}}_{=:\psi_*}(\psi_j\psi_{j-1}\psi_je(s_{j-2}\cdot s_j\cdot i_\lambda))\cdot\\
& \ \quad\psi_{j-3}\psi_{j-2}\psi_{j+1}\psi_{j-1}\psi_j\underbrace{\Psichaind{j-4}{b+3-2d}\Psichaind{j-2}{b+5-2d}v_{T_{(3)}}}_{=:v_{T''}\in\mathscr{D}}\\
&=\psi_*(\psi_{j-1}\psi_j\psi_{j-1}-y_{j-1}+2y_j-y_{j+1})\psi_{j-3}\psi_{j-2}\psi_{j+1}\psi_{j-1}\psi_jv_{T''}\\
&=\psi_*\psi_{j-1}\psi_j\psi_{j-3}(\psi_{j-1}\psi_{j-2}\psi_{j-1}e(s_j\cdot i_\lambda))\psi_{j+1}\psi_jv_{T''}\\
&\quad-\psi_*\psi_{j-3}(y_{j-1}\psi_{j-2}e(s_j\cdot i_\lambda))\psi_{j+1}\psi_{j-1}\psi_jv_{T''}\\
&\quad+2\psi_*\psi_{j-3}\psi_{j-2}\psi_{j+1}(y_j\psi_{j-1}e(s_j\cdot i_\lambda))\psi_jv_{T''}\\
&\quad-\psi_*\psi_{j-3}\psi_{j-2}(y_{j+1}\psi_{j+1}e(s_j\cdot i_\lambda))\psi_{j-1}\psi_jv_{T''}\\
&=\psi_*\psi_{j-1}\psi_j\psi_{j-3}\psi_{j-2}\psi_{j-1}\psi_{j-2}\psi_{j+1}\psi_jv_{T''}\\
&\quad-\psi_*\psi_{j-3}\psi_{j-2}y_{j-2}\psi_{j+1}\psi_{j-1}\psi_jv_{T''}\\
&\quad+2\psi_*\psi_{j-3}\psi_{j-2}\psi_{j+1}(\psi_{j-1}y_{j-1}+1)\psi_jv_{T''}\\
&\quad-\psi_*\psi_{j-3}\psi_{j-2}(\psi_{j+1}y_{j+2}-1)\psi_{j-1}\psi_jv_{T''}\\
&=\psi_*\psi_{j-1}\psi_{j-3}\psi_{j-2}\underbrace{\Psi_j\psi_{j-2}v_{T''}}_{=0\text{ by }(B_{r-2})}-0+0\\
&\quad+2\underbrace{\psi_j\psi_{j+1}\psi_{j-1}\psi_{j+2}}_{=:\psi^*}\psi_{j-2}\psi_{j-3}\psi_{j-2}\psi_{j+1}\psi_jv_{T''}\\
&\quad-0+\psi^*\psi_{j-2}\psi_{j-3}\psi_{j-2}\psi_{j-1}\psi_jv_{T''}\\
&=2\psi^*(\psi_{j-2}\psi_{j-3}\psi_{j-2}e(s_j\cdot i_\lambda))\psi_{j+1}\psi_jv_{T''}\\
&\quad+\psi^*(\psi_{j-2}\psi_{j-3}\psi_{j-2}e(s_j\cdot i_\lambda))\psi_{j-1}\psi_jv_{T''}\\
&=2\psi^*(\psi_{j-3}\psi_{j-2}\psi_{j-3}-y_{j-3}+2y_{j-2}-y_{j-1})\psi_{j+1}\psi_jv_{T''}\\
&\quad+\psi^*(\psi_{j-3}\psi_{j-2}\psi_{j-3}-y_{j-3}+2y_{j-2}-y_{j-1})\psi_{j-1}\psi_jv_{T''}\\
&=2\psi^*\psi_{j-3}\psi_{j-2}\psi_{j+1}\psi_j\underbrace{\psi_{j-3}v_{T''}}_{=0\text{ as $j{-}3$ is even}}-0+0-0\\
&\quad+\psi^*\psi_{j-3}\psi_{j-2}\psi_{j-1}\psi_j\underbrace{\psi_{j-3}v_{T''}}_{=0}-0+0\\
&\quad-\psi^*(y_{j-1}\psi_{j-1}e(s_j\cdot i_\lambda))\psi_jv_{T''}\\
&=-\psi^*(\psi_{j-1}y_j-1)\psi_jv_{T''}\\
&=-\psi^*\psi_{j-1}\psi_jy_{j+1}v_{T''}+\psi^*\psi_jv_{T''}\\
&=-0+\underbrace{\Psi_j\psi_{j+2}v_{T''}.}_{=0\text{ by }(A_{r-2})}\\
\end{align*}
\end{enumerate}
\end{enumerate}
Next, we show that $(B_r)$ follows if $(A_s)$ and $(B_s)$ hold for all $s<r$.

For $(B_0)$, we have $\Psi_j\psi_{j-2}z=\psi_j\psi_{j+1}\psi_{j-1}\psi_{j-2}\psi_jz=0$ as at least one of $\psi_j$, $\psi_{j-2}$ must annihilate $z$.

Now suppose $r>0$.

If $j_1\geqslant j+4$ or $j_1\leqslant j-6$, then we clearly have $\Psi_j\psi_{j-2}v_T=\Psi_{j_1}\Psi_j\psi_{j-2}v_{T'}$ and our result follows from $(B_{r-1})$. Once again, we break the proof up for the remaining four possible values of $j_1$.
\begin{enumerate}
\item Suppose $j_1=j+2$. If $b+3-2d=j+2$ then we have
\begin{align*}
\Psi_j\psi_{j-2}v_T&=\Psi_j\psi_{j-2}\Psi_{j+2}v_{T_{(2)}}\\
&=0
\end{align*}
as $\psi_{j-2}$ commutes with everything to its right, since the lowest indexed term in $v_{T_{(2)}}$ is $\Psi_{j+4}$.

If $b+3-2d\leqslant j$, we have
\begin{align*}
\Psi_j\psi_{j-2}v_T&=\Psi_j\psi_{j-2}\Psi_{j+2}\Psi_j\Psichaind{j-2}{b+3-2d}v_{T_{(2)}}\\
&=\underbrace{\psi_j\psi_{j+1}\psi_{j-1}\psi_{j-2}\psi_{j+2}\psi_{j+3}}_{=:\psi_*}(\psi_j\psi_{j+1}\psi_je(s_{j+2}\cdot s_j\cdot i_\lambda))\cdot\\
&\ \quad\psi_{j+2}\psi_{j+1}\psi_{j-1}\psi_j\Psichaind{j-2}{b+3-2d}v_{T_{(2)}}\\
&=\psi_*(\psi_{j+1}\psi_j\psi_{j+1}+y_j-2y_{j+1}+y_{j+2})\psi_{j+2}\psi_{j+1}\psi_{j-1}\psi_j\Psichaind{j-2}{b+3-2d}v_{T_{(2)}}\\
&=\psi_*\psi_{j+1}\psi_j(\psi_{j+1}\psi_{j+2}\psi_{j+1}e(s_j\cdot i_\lambda))\psi_{j-1}\psi_j\Psichaind{j-2}{b+3-2d}v_{T_{(2)}}\\
&\quad+\psi_*\psi_{j+2}\psi_{j+1}(y_j\psi_{j-1}e(s_j \cdot i_\lambda))\psi_j\Psichaind{j-2}{b+3-2d}v_{T_{(2)}}\\
&\quad-2\psi_*\psi_{j+2}(y_{j+1}\psi_{j+1}e(s_j\cdot i_\lambda))\psi_{j-1}\psi_j\Psichaind{j-2}{b+3-2d}v_{T_{(2)}}\\
&\quad+\psi_*(y_{j+2}\psi_{j+2}e(s_j\cdot i_\lambda))\psi_{j+1}\psi_{j-1}\psi_j\Psichaind{j-2}{b+3-2d}v_{T_{(2)}}\\
&=\psi_*\psi_{j+1}\psi_j\psi_{j+2}\psi_{j+1}\psi_{j+2}\psi_{j-1}\psi_j\Psichaind{j-2}{b+3-2d}v_{T_{(2)}}\\
&\quad+\psi_*\psi_{j+2}\psi_{j+1}(\psi_{j-1}y_{j-1}+1)\psi_j\Psichaind{j-2}{b+3-2d}v_{T_{(2)}}\\
&\quad-2\psi_*\psi_{j+2}(\psi_{j+1}y_{j+2}-1)\psi_{j-1}\psi_j\Psichaind{j-2}{b+3-2d}v_{T_{(2)}}\\
&\quad+\psi_*\psi_{j+2}y_{j+3}\psi_{j+1}\psi_{j-1}\psi_j\Psichaind{j-2}{b+3-2d}v_{T_{(2)}}\\
&=\psi_*\psi_{j+1}\psi_{j+2}\underbrace{\Psi_j\psi_{j+2}\Psichaind{j-2}{b+3-2d}v_{T_{(2)}}}_{=0\text{ by $(A_{r-2})$}}\\
&\quad+0+\psi_j\psi_{j+1}\psi_{j-1}\psi_{j-2}(\psi_{j+2}\psi_{j+3}\psi_{j+2}e(s_j\cdot i_\lambda))\psi_{j+1}\psi_j\Psichaind{j-2}{b+3-2d}v_{T_{(2)}}\\
&\quad-0+2\psi_j\psi_{j+1}\psi_{j-1}\psi_{j-2}(\psi_{j+2}\psi_{j+3}\psi_{j+2}e(s_j\cdot i_\lambda))\psi_{j-1}\psi_j\Psichaind{j-2}{b+3-2d}v_{T_{(2)}}\\
&\quad+0\\
&=(\psi_j\psi_{j+1}\psi_{j-1}\psi_{j-2}(\psi_{j+3}\psi_{j+2}\psi_{j+3}+y_{j+2}-2y_{j+3}+y_{j+4})\psi_{j+1}\\
&\quad+2\psi_j\psi_{j+1}\psi_{j-1}\psi_{j-2}(\psi_{j+3}\psi_{j+2}\psi_{j+3}+y_{j+2}-2y_{j+3}+y_{j+4})\psi_{j-1})\cdot\\
&\ \quad\psi_j\Psichaind{j-2}{b+3-2d}v_{T_{(2)}}\\
&=0+\psi_j\psi_{j+1}\psi_{j-1}\psi_{j-2}(y_{j+2}\psi_{j+1}e(s_j\cdot i_\lambda))\psi_j\Psichaind{j-2}{b+3-2d}v_{T_{(2)}}-0+0\\
&\quad+0+0-0+0\\
&=\psi_j\psi_{j+1}\psi_{j-1}\psi_{j-2}(\psi_{j+1}y_{j+1}+1)\psi_j\Psichaind{j-2}{b+3-2d}v_{T_{(2)}}\\
&=0+\Psi_j\psi_{j-2}\Psichaind{j-2}{b+3-2d}v_{T_{(2)}}\\
&=0\text{ by $(B_{r-2})$.}
\end{align*}
\item Suppose $j_1=j$. Then we have
\begin{align*}
\Psi_j\psi_{j-2}v_T&=-2\Psi_j\psi_{j-2}v_{T'}\text{ by part 1,}\\
&=0\text{ by }(B_{s-1}).
\end{align*}
\item Suppose $j_1=j-2$. Then we have
\begin{align*}
\Psi_j\psi_{j-2}v_T&=-2\Psi_j\psi_{j-2}v_{T'}\text{ by part 1,}\\
&=0\text{ by }(B_{r-1}).
\end{align*}
\item Suppose $j_1=j-4$. We divide into subcases.
\begin{enumerate}
\item Suppose $j_2\geqslant j+2$. Then we have
\begin{align*}
\Psi_j\psi_{j-2}v_T&=\Psi_j\psi_{j-2}\Psi_{j-4}\Psichaind{j-6}{b+3-2d}\Psichaind{j_2}{j+4}\Psi_{j+2}\Psi_j\Psichaind{j-2}{b+5-2d}v_{T_{(3)}}\\
&=\psi_j\psi_{j+1}\psi_{j-1}\psi_{j-2}\Psi_{j-4}\Psichaind{j-6}{b+3-2d}\Psichaind{j_2}{j+4}(\psi_j\Psi_{j+2}\Psi_j)\Psichaind{j-2}{b+5-2d}v_{T_{(3)}}\\
&=\psi_j\psi_{j+1}\psi_{j-1}\psi_{j-2}\Psi_{j-4}\Psichaind{j-6}{b+3-2d}\Psichaind{j_2}{j+4}(\psi_j+\Psi_{j+2}\Psi_j\psi_{j+2})\cdot\\
&\ \quad\Psichaind{j-2}{b+5-2d}v_{T_{(3)}}\text{ by $(*)$,}\\
&=\Psi_j\psi_{j-2}\Psichaind{j-4}{b+3-2d}\Psichaind{j_2}{j+4}\Psichaind{j-2}{b+5-2d}v_{T_{(3)}}\\
&\quad+0\text{ by $(A_s)$ for some $s<r$, as $\Psichaind{j-2}{b+5-2d}v_{T_{(3)}}\in\mathscr{D}$,}\\
&=0\text{ by $(B_{s'})$ for some $s'<r$, as $\Psichaind{j-4}{b+3-2d}\Psichaind{j_2}{j+4}\Psichaind{j-2}{b+5-2d}v_{T_{(3)}}\in\mathscr{D}$.}
\end{align*}
\item Suppose $j_2=j$. Then we have
\begin{align*}
\Psi_j\psi_{j-2}v_T&=\Psi_j\psi_{j-2}\Psichaind{j-4}{b+3-2d}\Psi_j\Psichaind{j-2}{b+5-2d}v_{T_{(3)}}\\
&=-2\Psi_j\psi_{j-2}\Psichaind{j-4}{b+3-2d}\Psichaind{j-2}{b+5-2d}v_{T_{(3)}}\text{ by part 1,}\\
&=0\text{ by $(B_{r-1})$, as $\Psichaind{j-4}{b+3-2d}\Psichaind{j-2}{b+5-2d}v_{T_{(3)}}\in\mathscr{D}$.}
\end{align*}
\item Suppose $j_2=j-2$. Then we have
\begin{align*}
\Psi_j\psi_{j-2}v_T&=\Psi_j\psi_{j-2}\Psi_{j-4}\Psi_{j-2}\underbrace{\Psichaind{j-6}{b+3-2d}\Psichaind{j-4}{b+5-2d}v_{T_{(3)}}}_{=:v_{T''}}\\
&=\Psi_j\psi_{j-4}\psi_{j-5}(\psi_{j-2}\psi_{j-3}\psi_{j-2}e(s_{j-4}\cdot s_{j-2}\cdot i_\lambda))\psi_{j-4}\psi_{j-1}\psi_{j-3}\psi_{j-2}v_{T''}\\
&=\Psi_j\psi_{j-4}\psi_{j-5}(\psi_{j-3}\psi_{j-2}\psi_{j-3}-y_{j-3}+2y_{j-2}-y_{j-1})\cdot\\
&\quad\psi_{j-4}\psi_{j-1}\psi_{j-3}\psi_{j-2}v_{T''}\\
&=\Psi_j\psi_{j-4}\psi_{j-5}\psi_{j-3}\psi_{j-2}(\psi_{j-3}\psi_{j-4}\psi_{j-3}e(s_{j-2}\cdot i_\lambda))\psi_{j-1}\psi_{j-2}v_{T''}\\
&\quad-\Psi_j\psi_{j-4}\psi_{j-5}(y_{j-3}\psi_{j-4}e(s_{j-2}\cdot i_\lambda))\psi_{j-1}\psi_{j-3}\psi_{j-2}v_{T''}\\
&\quad+2\Psi_j\psi_{j-4}\psi_{j-5}\psi_{j-4}\psi_{j-1}(y_{j-2}\psi_{j-3}e(s_{j-2}\cdot i_\lambda))\psi_{j-2}v_{T''}\\
&\quad-\Psi_j\psi_{j-4}\psi_{j-5}\psi_{j-4}(y_{j-1}\psi_{j-1}e(s_{j-2}\cdot i_\lambda))\psi_{j-3}\psi_{j-2}v_{T''}\\
&=\Psi_j\psi_{j-4}\psi_{j-5}\psi_{j-3}\psi_{j-2}\psi_{j-4}\psi_{j-3}\psi_{j-4}\psi_{j-1}\psi_{j-2}v_{T''}\\
&\quad-\Psi_j\psi_{j-4}\psi_{j-5}\psi_{j-4}y_{j-4}\psi_{j-1}\psi_{j-3}\psi_{j-2}v_{T''}\\
&\quad+2\Psi_j\psi_{j-4}\psi_{j-5}\psi_{j-4}\psi_{j-1}(\psi_{j-3}y_{j-3}+1)\psi_{j-2}v_{T''}\\
&\quad-\Psi_j\psi_{j-4}\psi_{j-5}\psi_{j-4}(\psi_{j-1}y_j-1)\psi_{j-3}\psi_{j-2}v_{T''}\\
&=\Psi_j\psi_{j-4}\psi_{j-5}\psi_{j-3}\psi_{j-4}\underbrace{\Psi_{j-2}\psi_{j-4}v_{T''}}_{=0\text{ by $(B_{r-2})$}}-0\\
&\quad+0+2\Psi_j(\psi_{j-4}\psi_{j-5}\psi_{j-4}e(s_{j-2}\cdot i_\lambda))\psi_{j-1}\psi_{j-2}v_{T''}\\
&\quad-0+\Psi_j(\psi_{j-4}\psi_{j-5}\psi_{j-4}e(s_{j-2}\cdot i_\lambda))\psi_{j-3}\psi_{j-2}v_{T''}\\
&=2\Psi_j(\psi_{j-5}\psi_{j-4}\psi_{j-5}-y_{j-5}+2y_{j-4}-y_{j-3})\psi_{j-1}\psi_{j-2}v_{T''}\\
&\quad+\Psi_j(\psi_{j-5}\psi_{j-4}\psi_{j-5}-y_{j-5}+2y_{j-4}-y_{j-3})\psi_{j-3}\psi_{j-2}v_{T''}\\
&=0-0+0-0+0-0+0-\Psi_j(y_{j-3}\psi_{j-3}e(s_{j-2}\cdot i_\lambda))\psi_{j-2}v_{T''}\\
&=-\Psi_j(\psi_{j-3}y_{j-2}-1)\psi_{j-2}v_{T''}\\
&=0+\Psi_j\psi_{j-2}v_{T''}\\
&=0\text{ by $(B_{r-2})$}.
\end{align*}
Note that both inductive steps are possible because $v_{T''}\in\mathscr{D}$.
\end{enumerate}
\end{enumerate}
This completes our proof of statements 2--5.\qedhere
\end{enumerate}
\end{proof}

In the next two results, we are concerned with how the Garnir element $\psi_1\psi_2\dots\psi_{b+1}$ acts on elements of $\mathscr{D}$.

\begin{lem}
Suppose $j$ is odd with $3\leqslant j\leqslant n-2$, and $T\in\Dom(\lambda)$. Then
\begin{enumerate}
\item For all odd $n-2\geqslant i\geqslant j+4$, $\psi_1\psi_2\dots\psi_j\Psi_iv_T=\Psi_i\psi_1\psi_2\dots\psi_jv_T$.
\item $\psi_1\psi_2\dots\psi_j\Psi_{j+2}v_T=\psi_{j+2}\psi_{j+3}\psi_1\psi_2\dots\psi_{j+2}v_T$.
\item $\psi_1\psi_2\dots\psi_j\Psi_jv_T=-2\psi_1\psi_2\dots\psi_jv_T$.
\item $\psi_1\psi_2\dots\psi_j\Psi_{j-2}v_T=\Psi_{j-1}\psi_1\psi_2\dots\psi_jv_T+\psi_j\psi_{j-1}\psi_1\psi_2\dots\psi_{j-2}v_T$.
\item For all odd $3\leqslant i\leqslant j-4$, \[\psi_1\psi_2\dots\psi_j\Psi_iv_T=\Psi_{i+1}\psi_1\psi_2\dots\psi_jv_T+\psi_{i+2}\psi_{i+1}\psi_{i+3}\psi_{i+4}\dots\psi_j\psi_1\psi_2\dots\psi_iv_T.\]
\end{enumerate}
\end{lem}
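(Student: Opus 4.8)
The plan is to handle the five parts in increasing order of difficulty, using only Lemma~\ref{commuting}, Proposition~\ref{y-actions} and Lemma~\ref{psi-actions}. Parts 1 and 2 are pure commutation identities, valid on any vector. For part 1, since $i\geqslant j+4$ every factor $\psi_k$ of the chain $\psi_1\psi_2\cdots\psi_j$ has $k\leqslant j\leqslant i-4$, so each commutes with $\Psi_i$ by the last line of Lemma~\ref{commuting}. For part 2 I would commute $\psi_1\cdots\psi_{j-1}$ past $\Psi_{j+2}=\psi_{j+2}\psi_{j+3}\psi_{j+1}\psi_{j+2}$, then use that $\psi_j$ commutes with $\psi_{j+2}$ and $\psi_{j+3}$ to rewrite $\psi_j\Psi_{j+2}=\psi_{j+2}\psi_{j+3}\psi_j\psi_{j+1}\psi_{j+2}$, and finally slide $\psi_{j+2}\psi_{j+3}$ back to the far left past $\psi_1\cdots\psi_{j-1}$; reassembling yields $\psi_{j+2}\psi_{j+3}\psi_1\cdots\psi_{j+2}$. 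Part 3 is immediate: acting by $\psi_1\cdots\psi_{j-1}$ on the left of $\psi_j\Psi_jv_T=-2\psi_jv_T$ (Lemma~\ref{psi-actions}(1)) gives $-2\psi_1\cdots\psi_jv_T$.

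Part 4 is the technical heart. I would first commute the inert prefix $\psi_1\cdots\psi_{j-5}$ past $\Psi_{j-2}=\psi_{j-2}\psi_{j-1}\psi_{j-3}\psi_{j-2}$ by Lemma~\ref{commuting}, reducing to a word in $\psi_{j-4},\dots,\psi_j$ applied to $v_T$. I would then repeatedly apply the braid relations, keeping track at each step of the local residue pattern: inserting $e(i_\lambda)$ on the right and recording how the $\psi$-word lying to the right of each braid point permutes the alternating sequence $i_\lambda=0101\cdots$, one decides whether the homogeneous braid relation or the inhomogeneous one, $\psi_r\psi_{r+1}\psi_re(i)=(\psi_{r+1}\psi_r\psi_{r+1}+y_r-2y_{r+1}+y_{r+2})e(i)$, applies. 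Since $i_\lambda$ has $i_r\neq i_{r+1}$ throughout, the inhomogeneous case is generic and spawns $y$-corrections; these are cleared by commuting each $y_k$ to the right (Lemma~\ref{commuting}) until it annihilates $v_T$ via Proposition~\ref{y-actions}(A), while any surviving term carrying an even-indexed $\psi_k$ or a $\psi_1$ adjacent to $v_T$ dies by Proposition~\ref{y-actions}(B) and~(C). After these cancellations exactly the two claimed terms $\Psi_{j-1}\psi_1\cdots\psi_jv_T$ and $\psi_j\psi_{j-1}\psi_1\cdots\psi_{j-2}v_T$ should remain.

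For part 5, with $i\leqslant j-4$, I would exploit that the tail $\psi_{i+3}\cdots\psi_j$ of the chain commutes with $\Psi_i$, whose top index is $i+1$, to write
\[\psi_1\cdots\psi_j\,\Psi_iv_T=(\psi_1\cdots\psi_{i+2})\,\Psi_i\,(\psi_{i+3}\cdots\psi_j)v_T.\]
The active factor $(\psi_1\cdots\psi_{i+2})\Psi_i$ is exactly the left-hand side of part 4 with $j$ replaced by $i+2$. Applying that identity and then sliding the inert tail back to the right, using that $\psi_1\cdots\psi_i$ commutes with $\psi_{i+3}\cdots\psi_j$, reassembles precisely $\Psi_{i+1}\psi_1\cdots\psi_jv_T+\psi_{i+2}\psi_{i+1}\psi_{i+3}\cdots\psi_j\psi_1\cdots\psi_iv_T$, as required.

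The hard part will be making the part~5 reduction rigorous, since part 4's identity is being applied to $(\psi_{i+3}\cdots\psi_j)v_T$, which does not lie in $\mathscr{D}$, so it cannot simply be quoted: one must re-run part 4's derivation on this vector and check that every correction still vanishes. The $y_k$- and even-$\psi$-corrections of index $\leqslant i+2$ commute past the inert tail and die on $v_T$ by Proposition~\ref{y-actions}, but the topmost correction---the $y_{i+3}$ produced by the braid at position $i+1$ (i.e.\ the $y_{j+1}$ term at the top of part~4)---does not commute past $\psi_{i+3}$, so I must verify that it cancels internally within the active computation before any vector is reached, or else abandon the shortcut and prove part~5 by a standalone braid computation that carries the inert tail $\psi_{i+3}\cdots\psi_j$ alongside, mirroring part~4. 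Controlling this top correction term, together with the residue bookkeeping needed to justify each braid move, is where the real work lies.
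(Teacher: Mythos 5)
Your overall strategy coincides with the paper's: parts 1 and 2 are pure commutation identities, part 3 is immediate from Lemma \ref{psi-actions}(1) after left-multiplying by $\psi_1\cdots\psi_{j-1}$, and parts 4 and 5 are done by explicit braid computations in which the correction terms are killed one by one. Parts 1--3 of your argument are complete and correct.

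For parts 4 and 5 you give a plan rather than a computation, and two points need attention before it closes. First, your inventory of cancellation mechanisms (commute each $y_k$ rightwards until Proposition \ref{y-actions}(A) applies; kill even-indexed $\psi_k$ and $\psi_1$ adjacent to $v_T$ by (B) and (C)) omits one that the computation genuinely requires: the quadratic relation $\psi_r^2e(i)=0$ when $i_r=i_{r+1}$. In the paper's part 4 a term containing $\psi_{j-3}^2e(s_{j-2}\cdot i_\lambda)$ arises and vanishes for this reason; it cannot be disposed of by (A)--(C), because the offending even-indexed $\psi_{j-3}$ is not adjacent to $v_T$. Second, your suspicion about part 5 is exactly right: invoking part 4 as a black box on $(\psi_{i+3}\cdots\psi_j)v_T$ is not legitimate, since that vector is not in $\mathscr{D}$, and the $y_{i+3}$ correction does not commute past $\psi_{i+3}$. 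The paper takes precisely your fallback route --- a standalone braid computation carrying the tail $\psi_{i+3}\cdots\psi_j$ alongside --- and the troublesome $y_{i+3}$ term does cancel internally, but again via the quadratic relation: after commutation it acquires a factor $\psi_{i-1}^2e(s_i\cdot s_{i+4}\cdots s_j\cdot i_\lambda)$, which vanishes because $s_i$ makes the residues in positions $i-1$ and $i$ coincide. So the plan is sound and matches the paper's proof in outline, but to complete it you must actually carry out the two braid computations and add $\psi_r^2e(i)=0$ to your list of tools.
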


\begin{proof}
1 and 2 follow immediately from definitions and the commuting relations between $\psi$ generators. 3 follows immediately from Lemma \ref{psi-actions}. So only statements 4 and 5 require any real work!
\begin{enumerate}
{\item[4.]\abovedisplayskip=0pt\abovedisplayshortskip=0pt~\vspace*{-\baselineskip}
\begin{align*}
\psi_1\psi_2\dots\psi_j\Psi_{j-2}v_T&=\underbrace{\psi_1\psi_2\dots\psi_{j-4}}_{=:\psi^*}\psi_{j-3}(\psi_{j-2}\psi_{j-1}\psi_{j-2}e(s_j\cdot s_{j-2}\cdot i_\lambda))\psi_j\psi_{j-1}\psi_{j-3}\psi_{j-2}v_T\\
&=\psi^*\psi_{j-3}(\psi_{j-1}\psi_{j-2}\psi_{j-1}+y_{j-2}-2y_{j-1}+y_j)\psi_j\psi_{j-1}\psi_{j-3}\psi_{j-2}v_T\\
&=\psi_{j-1}\psi^*\psi_{j-3}\psi_{j-2}(\psi_{j-1}\psi_j\psi_{j-1}e(s_{j-2}\cdot i_\lambda))\psi_{j-3}\psi_{j-2}v_T\\
&\quad+\psi_j\psi_{j-1}\psi^*\psi_{j-3}(y_{j-2}\psi_{j-3}e(s_{j-2}\cdot i_\lambda))\psi_{j-2}v_T\\
&\quad-2\psi_j\psi^*\psi_{j-3}(y_{j-1}\psi_{j-1}e(s_{j-2}\cdot i_\lambda))\psi_{j-3}\psi_{j-2}v_T\\
&\quad+\psi^*\psi_{j-3}(y_j\psi_je(s_{j-2}\cdot i_\lambda))\psi_{j-1}\psi_{j-3}\psi_{j-2}v_T\\
&=\psi_{j-1}\psi^*\psi_{j-3}\psi_{j-2}(\psi_j\psi_{j-1}\psi_j)\psi_{j-3}\psi_{j-2}v_T\\
&\quad+\psi_j\psi_{j-1}\psi^*\psi_{j-3}(\psi_{j-3}y_{j-3}+1)\psi_{j-2}v_T\\
&\quad-2\psi_j\psi^*\psi_{j-3}(\psi_{j-1}y_j-1)\psi_{j-3}\psi_{j-2}v_T\\
&\quad+\psi^*\psi_{j-3}(\psi_jy_{j+1})\psi_{j-1}\psi_{j-3}\psi_{j-2}v_T\\
&=\psi_{j-1}\psi_j\psi^*(\psi_{j-3}\psi_{j-2}\psi_{j-3}e(s_{j-1}\cdot s_j\cdot s_{j-2}\cdot i_\lambda))\psi_{j-1}\psi_j\psi_{j-2}v_T\\
&\quad+0+\psi_j\psi_{j-1}\psi^*\psi_{j-3}\psi_{j-2}v_T\\
&\quad-0+2\psi_j\psi^*(\psi_{j-3}^2e(s_{j-2}\cdot i_\lambda))\psi_{j-2}v_T+0\\
&=\psi_{j-1}\psi_j\psi^*(\psi_{j-2}\psi_{j-3}\psi_{j-2})\psi_{j-1}\psi_j\psi_{j-2}v_T\\
&\quad+\psi_j\psi_{j-1}\psi^*\psi_{j-3}\psi_{j-2}v_T+0\\
&=\psi_{j-1}\psi_j\psi_{j-2}\psi^*\psi_{j-3}(\psi_{j-2}\psi_{j-1}\psi_{j-2}e(s_j\cdot i_\lambda))\psi_jv_T\\
&\quad+\psi_j\psi_{j-1}\psi^*\psi_{j-3}\psi_{j-2}v_T\\
&=\psi_{j-1}\psi_j\psi_{j-2}\psi^*\psi_{j-3}(\psi_{j-1}\psi_{j-2}\psi_{j-1})\psi_jv_T+\psi_j\psi_{j-1}\psi^*\psi_{j-3}\psi_{j-2}v_T\\
&=\Psi_{j-1}\psi^*\psi_{j-3}\psi_{j-2}\psi_{j-1}\psi_jv_T+\psi_j\psi_{j-1}\psi^*\psi_{j-3}\psi_{j-2}v_T.
\end{align*}}
\item [5.] Let $i$ be odd and $4\leqslant i\leqslant j-4$. Then
\begin{align*}
\psi_1\psi_2\dots\psi_j\Psi_iv_T&=\underbrace{\psi_1\psi_2\dots\psi_{i-2}}_{\psi_*}\psi_{i-1}\psi_i\psi_{i+1}\psi_{i+2}\Psi_i\psi_{i+3}\underbrace{\psi_{i+4}\dots\psi_j}_{\psi^*}v_T\\
&=\psi_*\psi_{i-1}(\psi_i\psi_{i+1}\psi_ie(s_{i+2}\cdot s_i\cdot s_{i+4}\cdot s_{i+6}\cdots s_j\cdot i_\lambda))\cdot\\
&\ \quad\psi_{i+2}\psi_{i+1}\psi_{i-1}\psi_i\psi_{i+3}\psi^*v_T\\
&=\psi_*\psi_{i-1}(\psi_{i+1}\psi_i\psi_{i+1}+y_i-2y_{i+1}+y_{i+2})\psi_{i+2}\psi_{i+1}\psi_{i-1}\psi_i\psi_{i+3}\psi^*v_T\\
&=\psi_{i+1}\psi_*\psi_{i-1}\psi_i(\psi_{i+1}\psi_{i+2}\psi_{i+1}e(s_i\cdot s_{i+4}\cdot s_{i+6}\cdots s_j\cdot i_\lambda))\cdot\\
&\ \quad\psi_{i-1}\psi_i\psi_{i+3}\psi^*v_T\\
&\quad+\psi_{i+2}\psi_{i+1}\psi_*\psi_{i-1}(y_i\psi_{i-1}e(s_i\cdot s_{i+4}\cdot s_{i+6}\cdots s_j\cdot i_\lambda))\psi_i\psi_{i+3}\psi^*v_T\\
&\quad-2\psi_{i+2}\psi_*\psi_{i-1}(y_{i+1}\psi_{i+1}e(s_i\cdot s_{i+4}\cdot s_{i+6}\cdots s_j\cdot i_\lambda))\psi_{i-1}\psi_i\psi_{i+3}\psi^*v_T\\
&\quad+\psi_*\psi_{i-1}(y_{i+2}\psi_{i+2}e(s_i\cdot s_{i+4}\cdot s_{i+6}\cdots s_j\cdot i_\lambda))\psi_{i+1}\psi_{i-1}\psi_i\psi_{i+3}\psi^*v_T\\
&=\psi_{i+1}\psi_*\psi_{i-1}\psi_i(\psi_{i+2}\psi_{i+1}\psi_{i+2})\psi_{i-1}\psi_i\psi_{i+3}\psi^*v_T\\
&\quad+\psi_{i+2}\psi_{i+1}\psi_*\psi_{i-1}(\psi_{i-1}y_{i-1}+1)\psi_i\psi_{i+3}\psi^*v_T\\
&\quad-2\psi_{i+2}\psi_*\psi_{i-1}(\psi_{i+1}y_{i+2}-1)\psi_{i-1}\psi_i\psi_{i+3}\psi^*v_T\\
&\quad+\psi_*\psi_{i-1}(\psi_{i+2}y_{i+3})\psi_{i+1}\psi_{i-1}\psi_i\psi_{i+3}\psi^*v_T\\
&=\psi_{i+1}\psi_{i+2}\psi_*(\psi_{i-1}\psi_i\psi_{i-1}e(s_{i+1}\cdot s_{i+2}\cdot s_i\cdot s_{i+4}\cdots s_j\cdot i_\lambda))\cdot\\
&\ \quad\psi_{i+1}\psi_{i+2}\psi_i\psi_{i+3}\psi^*v_T+0+\psi_{i+2}\psi_{i+1}\psi_*\psi_{i-1}\psi_i\psi_{i+3}\psi^*v_T\\
&\quad-0+2\psi_{i+2}\psi_*(\psi_{i-1}^2e(s_i\cdot s_{i+4}\cdot s_{i+6}\cdots s_j\cdot i_\lambda))\psi_i\psi_{i+3}\psi^*v_T\\
&\quad+\psi_{i+2}\psi_{i+1}\psi_*(\psi_{i-1}^2e(s_i\cdot s_{i+4}\cdot s_{i+6}\cdots s_j\cdot i_\lambda))\psi_iy_{i+3}\psi_{i+3}\psi^*v_T\\
&=\psi_{i+1}\psi_{i+2}\psi_*(\psi_i\psi_{i-1}\psi_i)\psi_{i+1}\psi_{i+2}\psi_i\psi_{i+3}\psi^*v_T\\
&\quad+\psi_{i+2}\psi_{i+1}\psi_{i+3}\psi^*\psi_*\psi_{i-1}\psi_iv_T+0+0\\
&=\psi_{i+1}\psi_{i+2}\psi_i\psi_*\psi_{i-1}(\psi_i\psi_{i+1}\psi_ie(s_{i+2}\cdot s_{i+4}\cdots s_j\cdot i_\lambda))\psi_{i+2}\psi_{i+3}\psi^*v_T\\
&\quad+\psi_{i+2}\psi_{i+1}\psi_{i+3}\psi^*\psi_*\psi_{i-1}\psi_iv_T\\
&=\psi_{i+1}\psi_{i+2}\psi_i\psi_*\psi_{i-1}(\psi_{i+1}\psi_i\psi_{i+1})\psi_{i+2}\psi_{i+3}\psi^*v_T\\
&\quad+\psi_{i+2}\psi_{i+1}\psi_{i+3}\psi^*\psi_*\psi_{i-1}\psi_iv_T\\
&=\Psi_{i+1}\psi_*\psi_{i-1}\psi_i\psi_{i+1}\psi_{i+2}\psi_{i+3}\psi^*v_T+\psi_{i+2}\psi_{i+1}\psi_{i+3}\psi^*\psi_*\psi_{i-1}\psi_iv_T.\\
& & \qedhere
\end{align*}
\end{enumerate}
\end{proof}

\begin{prop}
Let $T\in\Dom(\lambda)$. Then $\psi_1\psi_2\dots\psi_{b+1}v_T=0$.
\end{prop}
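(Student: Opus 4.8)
The plan is to deduce the proposition from the stronger claim that $\psi_1\psi_2\cdots\psi_m v_T = 0$ for \emph{every} odd $m$ with $1\leqslant m\leqslant n-2$ and every $T\in\Dom(\lambda)$. The proposition is then the case $m=b+1$: since $b$ is even, $b+1$ is odd, and since $a$ is odd with $a>b$ (because $b<n/2$) we have $a\geqslant 3$, whence $b+1\leqslant a+b-2=n-2$. I would prove the claim by induction on the length $r(T)$.

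For the base case $r(T)=0$ we have $v_T=z$, and $\psi_1\cdots\psi_m z=0$ in all cases: if $m=b+1$ this is precisely the Garnir relation, while if $m\neq b+1$ it is immediate from $\psi_m z=0$ applied to the rightmost factor. The case $m=1$ of the claim holds at every value of $r(T)$ by Proposition \ref{y-actions}(C), so from here on I may assume $m\geqslant 3$.

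For the inductive step, write $v_T$ in normal form and peel off its leftmost term, so that $v_T=\Psi_{j_1}v_{T'}$, where $v_{T'}\in\mathscr{D}$ is again in normal form with $r(T')=r(T)-1$; this is legitimate by the earlier remark that deleting $\Psi$-terms from the left of a normal form yields another element of $\mathscr{D}$. Now apply the preceding lemma with $j=m$ and $i=j_1$. Its five cases, according to whether $j_1\geqslant m+4$, $j_1=m+2$, $j_1=m$, $j_1=m-2$ or $j_1\leqslant m-4$, rewrite $\psi_1\cdots\psi_m\Psi_{j_1}$ as a sum of terms each of the shape $(\text{prefix})\cdot\psi_1\cdots\psi_{m'}$ with $m'\in\{m-2,\,m,\,m+2\}$. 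In every case each such $m'$ is again odd and lies in $[1,n-2]$, so the inductive hypothesis applied to $v_{T'}$ gives $\psi_1\cdots\psi_{m'}v_{T'}=0$, and therefore $\psi_1\cdots\psi_m v_T=0$.

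The only point that needs genuine care is the bookkeeping on $m'$: in the cases $j_1=m-2$ and $j_1\leqslant m-4$ the ``main'' term keeps length $m$ while a correction term drops to length $m-2$ or $j_1$, and in the case $j_1=m+2$ the word \emph{grows} to length $m+2$. For this last possibility I must check that $m+2\leqslant n-2$ so that the hypothesis still applies, and this holds because $m+2=j_1$ is itself a $\Psi$-index, every one of which satisfies $j_1\leqslant n-2$. I expect this to be the main (though modest) obstacle: all the heavy computation has already been carried out in the preceding lemma, and the real content here is recognising that the statement must be generalised to let the word $\psi_1\cdots\psi_m$ vary in length, since otherwise the correction terms produced in cases 2, 4 and 5 would carry words of the ``wrong'' length that the bare proposition could not absorb.
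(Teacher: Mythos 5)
Your proof is correct and is essentially the paper's own argument: the paper likewise obtains the result by repeatedly applying the preceding lemma to push $\psi_1\cdots\psi_{b+1}$ through the $\Psi$-terms of the normal form, ending in expressions $\psi_1\cdots\psi_j z$ for various odd $j$, which vanish by the Garnir relation (for $j=b+1$) or by $\psi_j z=0$ (otherwise). You have merely made explicit, via the induction on $r(T)$ and the generalisation to arbitrary odd word length $m\leqslant n-2$, the bookkeeping that the paper leaves implicit in the phrase ``for various odd values of $j$''.
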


\begin{proof}
Repeated application of the above lemma yields $\psi_1\psi_2\dots\psi_{b+1}v_T$ as a sum of expressions ending in $\psi_1\psi_2\dots\psi_jz$ for various odd values of $j\geqslant 3$. In all cases the relations of the Specht module give us our result.
\end{proof}

\section{Decomposability when $n$ is odd}

We can now begin calculating $\mathscr{H}$-endomorphisms of $S_\lambda$. We now know that $f\in\End_\mathscr{H}(S_\lambda)$ if and only if \[f(z)=\sum_{T\in\Dom(\lambda)}\alpha_Tv_T\text{ for some }\alpha_T\in\mathbb{F}\] with $\psi_jf(z)=0$ for all odd $j\neq b+1$ with $3\leqslant j\leqslant n-2$.

\begin{defn}
Let $i,j$ be odd integers with $3\leqslant i\leqslant b+1<j\leqslant n$. We will denote by $T_{i,j}$ the tableau with dominoes $\{[2,3],[4,5],\dots,[b,b+1],[j-1,j]\}\setminus\{[i-1,i]\}$ in the leg.
\end{defn}

\begin{eg}
If $\lambda=(5,1^4)$ then $T_{5,9}=\young(14567,2,3,8,9)$ and $T_{3,7}=\young(12389,4,5,6,7)$.
\end{eg}

\begin{rem}
We observe that the normal form for $v_{T_{i,j}}$ is $\Psichainu{i}{b-1}\Psichaind{j-2}{b+1}z$.
\end{rem}

\begin{prop}\label{endomorphismf}
Suppose $a$ is odd and $b$ is even. Then there exists an $\mathscr{H}$-endomorphism $f$ of $S_\lambda$ given by \[f(z)=\sum_{\substack{3\leqslant i\leqslant b+1\\ b+3\leqslant j\leqslant n\\i,j \text{ odd}}}\mfrac{i-1}{2}\cdot\mfrac{n+2-j}{2}v_{T_{i,j}}.\]
\end{prop}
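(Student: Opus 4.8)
The plan is to check that the assignment $z \mapsto f(z)$ respects every defining relation of the cyclic module $S_\lambda$; since $S_\lambda$ is generated by $z$ subject to those relations, this is precisely what makes $f$ extend to a well-defined $\mathscr{H}$-endomorphism. Because $f(z)$ lies in the span of $\mathscr{D}$, most of the relations come for free: $e(i)f(z)=\delta_{i,i_\lambda}f(z)$ holds since $\mathscr{D}\subseteq e(i_\lambda)S_\lambda$; $y_kf(z)=0$ for all $k$, together with $\psi_kf(z)=0$ for even $k$ and for $k=1$, follow immediately from Proposition \ref{y-actions}; and $\psi_1\psi_2\cdots\psi_{b+1}f(z)=0$ follows from the preceding proposition. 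Hence the whole content of the statement reduces to the single family of identities \[\psi_\ell f(z)=0\quad\text{for every odd }\ell\text{ with }3\leqslant\ell\leqslant n-2\text{ and }\ell\neq b+1.\]

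To prove these I would fix such an $\ell$, expand $\psi_\ell f(z)=\sum_{i,j}\frac{i-1}{2}\cdot\frac{n+2-j}{2}\,\psi_\ell v_{T_{i,j}}$, and write each $v_{T_{i,j}}$ in normal form $C_iL_jz$ with arm chain $C_i=\Psichainu{i}{b-1}$ and leg chain $L_j=\Psichaind{j-2}{b+1}$. The ranges $\ell\leqslant b-1$ (arm) and $\ell\geqslant b+3$ (leg) are handled in the same way, so I describe the arm case. For $\ell\leqslant b-3$ the generator $\psi_\ell$ commutes past the entire leg chain by Lemma \ref{commuting}, so for each \emph{fixed} $j$ the factor $\frac{n+2-j}{2}$ is constant and it suffices to show $\sum_i\frac{i-1}{2}\,\psi_\ell C_i(L_jz)=0$. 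Sliding $\psi_\ell$ rightward by Lemma \ref{commuting} and applying Lemma \ref{psi-actions}, one finds: for $i\geqslant\ell+4$ the term dies (the chain commutes past $\psi_\ell$, which reaches $z$ and is killed); for $i\leqslant\ell-4$ it also dies (relation 4 collapses $\psi_\ell\Psi_{\ell-2}\Psi_\ell$ to $\psi_\ell$, after which the surviving factor $\Psi_{\ell-4}$ commutes to $z$, where $\Psi_{\ell-4}z=0$; this uses the remark that tails of normal forms lie in $\mathscr{D}$); and the three central values $i\in\{\ell-2,\ell,\ell+2\}$ yield, via relation 4, relation 1, and the definition, the \emph{same} element $X$ with respective weights $+1,-2,+1$.

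The crux is then purely numerical: $\sum_i\frac{i-1}{2}\,\psi_\ell C_i(L_jz)=\bigl(\tfrac{(\ell-2)-1}{2}-2\cdot\tfrac{\ell-1}{2}+\tfrac{(\ell+2)-1}{2}\bigr)X$, and the bracket is the second difference of the linear sequence $i\mapsto\frac{i-1}{2}$, hence zero. In other words, the weights $(1,-2,1)$ produced by Lemma \ref{psi-actions} form a discrete second-derivative operator, and the coefficients of $f(z)$ are chosen linear in $i$ (and, in the leg computation, linear in $j$) exactly so as to be annihilated by it. The leg case is identical with $i$ and $j$ exchanged: one fixes $i$, the arm factor is inert, relations 2 and 1 apply, the three surviving values are $j\in\{\ell,\ell+2,\ell+4\}$ with weights $(1,-2,1)$, and $\frac{n+2-j}{2}$ is linear in $j$.

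The main obstacle is the careful bookkeeping at the two boundary values $\ell=b-1$ and $\ell=b+3$ flanking the excluded index $b+1$. There $\psi_\ell$ reaches across to the leg term $\Psi_{b+1}$, and the common element that factors out (namely $\psi_{b-1}L_j$, respectively $\psi_{b+3}\Psi_{b+1}z$) genuinely involves $\psi_{b+1}z$, which is \emph{not} zero, so it cannot be simplified. The point to verify is that the very same three-term collapse with weights $(1,-2,1)$ still occurs at $i\in\{b-3,b-1,b+1\}$ (respectively $j\in\{b+3,b+5,b+7\}$), with any out-of-range member of the triple automatically carrying coefficient zero, so that this messy common element factors out against a vanishing second difference and never needs to be evaluated. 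Confirming that no additional terms survive at these two boundaries, and that the range restrictions $3\leqslant i\leqslant b+1$ and $b+3\leqslant j\leqslant n$ mesh correctly with the cutoffs, is where the real care lies.
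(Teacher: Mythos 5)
Your proposal is correct and is essentially the paper's own proof: after reducing to $\psi_\ell f(z)=0$ for odd $\ell\neq b+1$ (which the paper justifies via the Section 5 results, exactly as you do), one identifies the three surviving values of $i$ (resp.\ $j$), collapses them onto a common element with weights $(1,-2,1)$ using Lemma \ref{psi-actions}, and concludes because the second difference of the linear coefficients $\tfrac{i-1}{2}$ and $\tfrac{n+2-j}{2}$ vanishes. The only organisational difference is that the paper splits into the cases $\ell=3$, $5\leqslant\ell\leqslant b-1$, $b+3\leqslant\ell\leqslant n-4$, $\ell=n-2$, so its boundary bookkeeping sits at $\ell=3$ and $\ell=n-2$ (where the out-of-range triple member indeed carries coefficient zero) rather than at $\ell=b\pm$ as you anticipated.
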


\begin{proof}
All we need to show is that $\psi_kf(z)=0$ for all odd $k\neq b+1$ with $3\leqslant k\leqslant n-2$. We will rely extensively on our previous results regarding the actions of $\psi$ generators on tableaux.

First, notice that $\psi_3v_{T_{i,j}}=0$ for all $i\geqslant 7$. So
\begin{align*}
\psi_3f(z)&=\psi_3\left(\sum_j2\cdot\mfrac{n+2-j}{2}v_{T_{5,j}}+\mfrac{n+2-j}{2}v_{T_{3,j}}\right)\\
&=\sum_j\mfrac{n+2-j}{2}(2\psi_3\cdot v_{T_{5,j}}-2\psi_3\cdot v_{T_{5,j}})\\
&=0.
\end{align*}

Next, suppose $5\leqslant k\leqslant b-1$. We notice that $\psi_kv_{T_{i,j}}=0$ for all $i\leqslant k-4$ and for all $i\geqslant k+4$. So
\begin{align*}
\psi_kf(z)&=\psi_k\left(\sum_j\mfrac{k+1}{2}\cdot\mfrac{n+2-j}{2}v_{T_{k+2,j}}+\mfrac{k-1}{2}\cdot\mfrac{n+2-j}{2}v_{T_{k,j}}\right.\\
&\quad\left.+\mfrac{k-3}{2}\cdot\mfrac{n+2-j}{2}v_{T_{k-2,j}}\right)\\
&=\sum_j\mfrac{n+2-j}{2}\left(\mfrac{k+1}{2}-2\cdot\mfrac{k-1}{2}+\mfrac{k-3}{2}\right)\psi_kv_{T_{k+2,j}}\\
&=0.
\end{align*}

Now, for $b+3\leqslant k\leqslant n-4$, we notice that $\psi_kv_{T_{i,j}}=0$ for all $j\leqslant k-2$ and for all $j\geqslant k+6$. So
\begin{align*}
\psi_kf(z)&=\psi_k\left(\sum_i\mfrac{i-1}{2}\cdot\mfrac{n+2-k}{2}v_{T_{i,k}}+\mfrac{i-1}{2}\cdot\mfrac{n-k}{2}v_{T_{i,k+2}}\right.\\
&\quad+\left.\mfrac{i-1}{2}\cdot\mfrac{n-k-2}{2}v_{T_{i,k+4}}\right)\\
&=\sum_i\mfrac{i-1}{2}\left(\mfrac{n+2-k}{2}-2\cdot\mfrac{n-k}{2}+\mfrac{n-k-2}{2}\right)\psi_kv_{T_{i,k}}\\
&=0.
\end{align*}
Finally, we notice that $\psi_{n-2}v_{T_{i,j}}=0$ unless $j=n-2$ or $n$. So
\begin{align*}
\psi_{n-2}f(z)&=\psi_{n-2}\left(\sum_i\mfrac{i-1}{2}\cdot 2\cdot v_{T_{i,n-2}}+\mfrac{i-1}{2}v_{T_{i,n}}\right)\\
&=\sum_i(i-1)\psi_{n-2}v_{T_{i,n-2}}-2\cdot\mfrac{i-1}{2}\psi_{n-2}v_{T_{i,n-2}}\\
&=0.\qedhere
\end{align*}
\end{proof}

\begin{rem}
This endomorphism allows us to tackle our decomposability question. In particular, $S_\lambda$ can be decomposed into a direct sum of the generalised eigenspaces of $f$. That is $E_x=\{v\in S_\lambda \ | (f-xI)^nv=0\text{ for some }n\in\mathbb{N}\}$ for each eigenvalue $x$ of $f$, and \[S_\lambda=\bigoplus_{\text{$x$ an eigenvalue of $f$}}E_x.\] From the definition of $E_x$ it is clear that it is a non-zero $\mathscr{H}$-module whenever $x$ is an eigenvalue of $f$. The existence of two distinct eigenvalues of $f$ would ensure that we have at least two non-trivial summands in the decomposition above, and we would be done.
\end{rem}

The following lemma will be used repeatedly in further proofs.

\begin{lem}
Suppose $x_1\geqslant y_1\geqslant 3$ and $x_2\geqslant y_2\geqslant 3$ are all odd numbers. Suppose also that $X\in e(i_\lambda)S_\lambda$. Then we have the following cancellation relations:
\begin{enumerate}
\item If $x_1\geqslant x_2\geqslant y_1$ we have \[\Psichaind{x_1}{y_1}\Psichaind{x_2}{y_2}X=\Psichaind{x_2-4}{y_1}\Psichaind{x_1}{y_2}X.\]\\
\item If $x_2\geqslant y_1\geqslant y_2$ we have \[\Psichaind{x_1}{y_1}\Psichaind{x_2}{y_2}X=\Psichaind{x_1}{y_2}\Psichaind{x_2}{y_1+4}X.\]
\end{enumerate}
\end{lem}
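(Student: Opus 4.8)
The plan is to work entirely inside $e(i_\lambda)S_\lambda$ and to reduce everything to three local $\Psi$-identities. First I would observe that, by Lemma~\ref{commuting}, $e(i_\lambda)$ commutes with every $\Psi_j$, so each $\Psi_j$ preserves $e(i_\lambda)S_\lambda$ and hence so does any chain; since $\mathscr{D}$ is a basis of this space, the identities of Lemma~\ref{psi-actions} hold verbatim with $v_T$ replaced by an arbitrary $X\in e(i_\lambda)S_\lambda$. Pre-multiplying parts 1, 2 and 4 of that lemma by $\psi_j\psi_{j+1}\psi_{j-1}$ and reading off $\Psi_j=\psi_j\psi_{j+1}\psi_{j-1}\psi_j$ gives the three relations $\Psi_j^2X=-2\Psi_jX$, $\Psi_j\Psi_{j+2}\Psi_jX=\Psi_jX$ and $\Psi_j\Psi_{j-2}\Psi_jX=\Psi_jX$, valid for all $X\in e(i_\lambda)S_\lambda$. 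These, together with the commutation $\Psi_j\Psi_k=\Psi_k\Psi_j$ for $|j-k|\geqslant4$ from Lemma~\ref{commuting} and the evident concatenation $\Psichaind{x}{k}\Psichaind{k-2}{y}=\Psichaind{x}{y}$, are the only ingredients.

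The heart of the argument is a pair of \emph{atomic moves} describing how one $\Psi$ meets a chain. For Case 1, I would prove that for odd $y_1<x_2\leqslant x_1$ one has $\Psichaind{x_1}{y_1}\Psi_{x_2}X=\Psichaind{x_1}{x_2}\Psichaind{x_2-4}{y_1}X$: writing $\Psichaind{x_1}{y_1}=\Psichaind{x_1}{x_2+2}\,\Psi_{x_2}\Psi_{x_2-2}\,\Psichaind{x_2-4}{y_1}$, the trailing $\Psi_{x_2}$ commutes leftward past $\Psichaind{x_2-4}{y_1}$ (all index gaps are at least $4$) to produce $\Psi_{x_2}\Psi_{x_2-2}\Psi_{x_2}$, which collapses to $\Psi_{x_2}$ by the third relation above. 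Symmetrically, for Case 2 I would prove that for odd $y_2\leqslant y_1<x_2$ one has $\Psi_{y_1}\Psichaind{x_2}{y_2}X=\Psichaind{x_2}{y_1+4}\Psichaind{y_1}{y_2}X$, where now $\Psi_{y_1}$ commutes rightward past $\Psichaind{x_2}{y_1+4}$ to produce $\Psi_{y_1}\Psi_{y_1+2}\Psi_{y_1}$, collapsing to $\Psi_{y_1}$ by the second relation.

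Case 1 then follows by peeling $\Psi_{x_2}$ off the top of the second chain, $\Psichaind{x_2}{y_2}=\Psi_{x_2}\Psichaind{x_2-2}{y_2}$, applying the first atomic move, commuting $\Psichaind{x_1}{x_2}$ past $\Psichaind{x_2-4}{y_1}$, and concatenating $\Psichaind{x_1}{x_2}\Psichaind{x_2-2}{y_2}=\Psichaind{x_1}{y_2}$. Case 2 follows in the mirror-image way: peel $\Psi_{y_1}$ off the bottom of the first chain, $\Psichaind{x_1}{y_1}=\Psichaind{x_1}{y_1+2}\Psi_{y_1}$, apply the second atomic move, commute $\Psichaind{x_2}{y_1+4}$ past $\Psichaind{y_1}{y_2}$, and concatenate $\Psichaind{x_1}{y_1+2}\Psichaind{y_1}{y_2}=\Psichaind{x_1}{y_2}$.

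The main obstacle is not conceptual but careful index bookkeeping: one must check at each step that the factors being interchanged have index gap at least $4$, and separately treat the boundary cases in which the shortened chain $\Psichaind{x_2-4}{y_1}$ or $\Psichaind{x_2}{y_1+4}$ degenerates to the identity. The one genuinely delicate point is the extremal overlap $x_2=y_1$: there the middle factor $\Psi_{x_2-2}$ (respectively $\Psi_{y_1+2}$) is absent, the junction instead reads $\Psi_{x_2}^2$, and the first relation $\Psi_{x_2}^2X=-2\Psi_{x_2}X$ introduces a spurious factor of $-2$. I would therefore record that the stated identities are to be applied with $x_2>y_1$, which is precisely the range in which the atomic moves are valid.
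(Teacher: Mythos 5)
Your proof is correct and follows essentially the same route as the paper's: split $\Psichaind{x_1}{y_1}$ at the junction index, commute the single $\Psi_{x_2}$ (resp.\ $\Psi_{y_1}$) across a block all of whose indices differ from it by at least $4$, collapse the resulting $\Psi_j\Psi_{j\mp2}\Psi_j$ to $\Psi_j$ via Lemma~\ref{psi-actions} (extended by linearity from $\mathscr{D}$ to all of $e(i_\lambda)S_\lambda$), and recombine using the commutation and concatenation of chains. Your caveat about the extremal overlap $x_2=y_1$, where the junction degenerates to $\Psi_{x_2}^2$ and a factor of $-2$ appears, is well taken: the paper's own opening decomposition $\Psichaind{x_1}{y_1}=\Psichaind{x_1}{x_2+2}\Psi_{x_2}\Psi_{x_2-2}\Psichaind{x_2-4}{y_1}$ likewise only makes sense for $x_2\geqslant y_1+2$, which (for odd integers) is exactly the restriction $x_2>y_1$ you impose and is the range in which the lemma is subsequently applied.
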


\begin{proof}
\begin{enumerate}
\Item \begin{align*}
\Psichaind{x_1}{y_1}\Psichaind{x_2}{y_2}X&=\Psichaind{x_1}{x_2+2}\Psi_{x_2}\Psi_{x_2-2}\Psichaind{x_2-4}{y_1}\Psichaind{x_2}{y_2}X\\
&=\Psichaind{x_1}{x_2+2}\Psi_{x_2}\Psi_{x_2-2}\Psi_{x_2}\Psichaind{x_2-4}{y_1}\Psichaind{x_2-2}{y_2}X\\
&=\Psichaind{x_1}{x_2+2}\Psi_{x_2}\Psichaind{x_2-4}{y_1}\Psichaind{x_2-2}{y_2}X\\
&=\Psichaind{x_2-4}{y_1}\Psichaind{x_1}{y_2}X.
\end{align*}
\item The proof proceeds similarly to the previous case.\qedhere
\end{enumerate}
\end{proof}

Now, we work towards computing the eigenvalues of $f$. It is clear that $f$ acts on $e(i_\lambda)S_\lambda$; $f(v_T)\in e(i_\lambda)S_\lambda$ whenever $T\in\Dom(\lambda)$ by the nature of our actions of $\psi$ generators on elements of $\mathscr{D}$. We will show that the action of $f$ on $e(i_\lambda)S_\lambda$ is triangular. Take $T\in \Dom(\lambda)$, and write $v_T$ in normal form:
\begin{align*}
v_T&=\Psichaind{j_1}{b+3-2d}\Psichaind{j_2}{b+5-2d}\dots\Psichaind{j_d}{b+1}z.\\
\intertext{Then we want to look at}
f(v_T)&=\Psichaind{j_1}{b+3-2d}\Psichaind{j_2}{b+5-2d}\dots\Psichaind{j_d}{b+1}\cdot f(z)\\
&=\sum_{\substack{3\leqslant i\leqslant b+1\\ b+3\leqslant j\leqslant n\\i,j \text{ odd}}}\mfrac{i-1}{2}\cdot\mfrac{n+2-j}{2}\Psichaind{j_1}{b+3-2d}\Psichaind{j_2}{b+5-2d}\dots\Psichaind{j_d}{b+1}\cdot\Psichainu{i}{b-1}\Psichaind{j-2}{b+1}z.
\end{align*}

We begin by looking at the simplified case where $d=1$.

\begin{lem}\label{triangularsimple} Let $3\leqslant i\leqslant b+1<j\leqslant n$  and $j_d\geqslant b+1$. Then \[\Psichaind{j_d}{b+1}\cdot\Psichainu{i}{b-1}\Psichaind{j-2}{b+1}z=\begin{cases}
-2\Psichaind{j_d}{b+1}z & \text{if $i=b+1$ and $j=b+3$,}\\
\Psichaind{j_d}{b+1}z & \text{if $i=b+1$ and $j=b+5$,}\\
0 & \text{if $i=b+1$ and $j\geqslant b+7$,}\\
\Psichaind{j_d}{b+1}z & \text{if $i=b-1$ and $j=b+3$,}\\
\Psichainu{i}{b-3}\Psichaind{j_d}{b-1}\Psichaind{j-2}{b+1}z & \text{if $i\leqslant b-1$ and $j_d\leqslant j-4$,}\\
0 & \text{if $i<b-1$ and $j_d\geqslant j-2$ and $j=b+3$,}\\
\Psichainu{i}{b-3}\Psichaind{j-6}{b-1}\Psichaind{j_d}{b+1}z & \text{if $i\leqslant b-1$ and $j_d\geqslant j-2$ and $j\geqslant b+5$.}
\end{cases}\]
\end{lem}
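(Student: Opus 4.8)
The plan is to split into the two regimes $i=b+1$ and $i\leqslant b-1$, the difference being that for $i=b+1$ the upward chain $\Psichainu{i}{b-1}$ is empty by the degeneracy convention, while for $i\leqslant b-1$ it contains a genuine factor $\Psi_{b-1}$ that ends up adjacent to the incoming $\Psi_{b+1}$. In the first regime I would factor $\Psichaind{j_d}{b+1}=\Psichaind{j_d}{b+3}\Psi_{b+1}$ and reduce everything to computing $\Psi_{b+1}v_{T_{b+1,j}}=\Psi_{b+1}\Psichaind{j-2}{b+1}z$. For $j=b+3$ this is $\Psi_{b+1}\Psi_{b+1}z=-2\Psi_{b+1}z$ by part 1 of Lemma \ref{psi-actions}; for $j=b+5$ it is $\Psi_{b+1}\Psi_{b+3}\Psi_{b+1}z=\Psi_{b+1}z$ by part 2; and for $j\geqslant b+7$, commuting the leading $\Psi_{b+1}$ rightward past $\Psi_{j-2},\dots,\Psi_{b+5}$ and collapsing $\Psi_{b+1}\Psi_{b+3}\Psi_{b+1}z$ by part 2 leaves $\Psi_{b+1}\Psichaind{j-2}{b+5}z$, which is zero since the trailing $\Psi_{b+5}$ gives $\Psi_{b+5}z=0$. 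Reattaching $\Psichaind{j_d}{b+3}$ and using $\Psichaind{j_d}{b+3}\Psi_{b+1}=\Psichaind{j_d}{b+1}$ then produces cases 1--3.

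For $i\leqslant b-1$ the crucial preliminary is the purely commutational identity
\[\Psichaind{j_d}{b+1}\Psichainu{i}{b-1}=\Psichainu{i}{b-3}\Psichaind{j_d}{b-1},\]
which follows from Lemma \ref{commuting}: the factors $\Psi_{b+3},\dots,\Psi_{j_d}$ commute with the whole upward chain, the trailing $\Psi_{b+1}$ commutes past $\Psi_i,\dots,\Psi_{b-3}$ to meet $\Psi_{b-1}$, and $\Psichaind{j_d}{b+1}\Psi_{b-1}=\Psichaind{j_d}{b-1}$ by definition. This rewrites the left-hand side as $\Psichainu{i}{b-3}\Psichaind{j_d}{b-1}\Psichaind{j-2}{b+1}z$, and I would feed the two adjacent downward chains $\Psichaind{j_d}{b-1}\Psichaind{j-2}{b+1}$ into the cancellation lemma proved above. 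When $j_d\leqslant j-4$ neither cancellation relation is applicable and one is left exactly with case 5; when $j_d\geqslant j-2$ and $j\geqslant b+5$ the first cancellation relation turns this into $\Psichaind{j-6}{b-1}\Psichaind{j_d}{b+1}$, which is case 7. In both subcases no further reduction is available and the displayed expressions result.

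The boundary value $j=b+3$, where $\Psichaind{j-2}{b+1}=\Psi_{b+1}$, accounts for cases 4 and 6. Here the first cancellation relation collapses $\Psichaind{j_d}{b-1}\Psi_{b+1}$ to $\Psichaind{b-3}{b-1}\Psichaind{j_d}{b+1}=\Psichaind{j_d}{b+1}$, since $\Psichaind{b-3}{b-1}$ is empty. For $i=b-1$ the remaining upward chain $\Psichainu{b-1}{b-3}$ is also empty, leaving $\Psichaind{j_d}{b+1}z$, which is case 4. For $i<b-1$ the nonempty upward chain $\Psichainu{i}{b-3}$ now commutes freely past $\Psichaind{j_d}{b+1}$ (all indices differ by at least $4$) and lands on $z$, where its trailing factor gives $\Psi_{b-3}z=0$; hence the whole expression vanishes, which is case 6.

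I expect no single computation to be hard, given Lemma \ref{commuting}, Lemma \ref{psi-actions} and the cancellation relations. The delicate points are the bookkeeping in the commutation identity and, above all, correctly handling the degenerate chains at the boundary $j=b+3$ (and, for $i=b+1$, at $j=b+3,b+5$): the two genuinely nonroutine observations are the vanishings in cases 3 and 6, where after the reductions one must recognise that the result is killed by the defining relation $\psi_kz=0$ for $k\neq b+1$ --- once through an index $b+5$ above $b+1$, and once through an index $b-3$ below it.
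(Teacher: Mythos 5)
Your proposal is correct and follows essentially the same route as the paper: peel off $\Psi_{b+1}$ and use Lemma \ref{psi-actions} for the $i=b+1$ cases, use the commutation identity $\Psichaind{j_d}{b+1}\Psichainu{i}{b-1}=\Psichainu{i}{b-3}\Psichaind{j_d}{b-1}$ for $i\leqslant b-1$, and finish with the cancellation relations, with the vanishing in cases 3 and 6 coming from a trailing $\Psi_{b+5}$ or $\Psi_{b-3}$ killing $z$. The only difference is cosmetic: you route a couple of the small boundary cases through the cancellation lemma where the paper computes them directly.
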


\begin{proof}
First suppose $i=b+1$. If $j=b+3$ we have \[\Psichaind{j_d}{b+1}\cdot\Psi_{b+1}z=-2\Psichaind{j_d}{b+1}z.\]

If $j=b+5$ we have \[\Psichaind{j_d}{b+1}\cdot\Psi_{b+3}\Psi_{b+1}z =\Psichaind{j_d}{b+1}z.\]

If $j\geqslant b+7$ we have
\begin{align*}
\Psichaind{j_d}{b+1}\cdot\Psichaind{j-2}{b+1}z&=\Psichaind{j_d}{b+1}\cdot\Psichaind{j-2}{b+5}z\\
&=0.
\end{align*}

If $i=b-1$ and $j=b+3$ we have \[\Psichaind{j_d}{b+1}\cdot\Psi_{b-1}\Psi_{b+1}z=\Psichaind{j_d}{b+1}z.\]

Next, suppose $i\leqslant b-1$. If $j_d\leqslant j-4$ we already have an expression in reduced form and the commuting relations alone put it into our normal form to give the stated result.

So let $j_d\geqslant j-2$. Suppose $i<b-1$ and $j=b+3$. Then we have
\begin{align*}
\Psichaind{j_d}{b+1}\cdot\Psichainu{i}{b-1}\Psi_{b+1}z&=\Psichaind{j_d}{b+3}\cdot\Psichainu{i}{b-3}\Psi_{b+1}\Psi_{b-1}\Psi_{b+1}z\\
&=\Psichaind{j_d}{b+1}\cdot\Psichainu{i}{b-3}z\\
&=0.
\end{align*}

Finally, let $j\geqslant b+5$. Then we have
\begin{align*}
\Psichaind{j_d}{b+1}\cdot\Psichainu{i}{b-1}\Psichaind{j-2}{b+1}z&=\Psichainu{i}{b-3}\Psichaind{j_d}{b-1}\Psichaind{j-2}{b+1}z\\
&=\Psichainu{i}{b-3}\Psichaind{j-6}{b-1}\Psichaind{j_d}{b+1}z
\end{align*}
which is reduced and in our normal form.
\end{proof}

\begin{prop}\label{triangular}
Suppose $v_T=\Psichaind{j_1}{b+3-2d}\Psichaind{j_2}{b+5-2d}\dots\Psichaind{j_d}{b+1}z\in\mathscr{D}$ is in reduced normal form, $i$ and $j$ are odd with $3\leqslant i\leqslant b+1<j\leqslant n$ and let \[(*)=\Psichaind{j_1}{b+3-2d}\Psichaind{j_2}{b+5-2d}\dots\Psichaind{j_d}{b+1}\cdot\Psichainu{i}{b-1}\Psichaind{j-2}{b+1}z.\] 

Then $(*)$ is a scalar multiple of either $v_T$ or some longer $\Psi$-expression. In particular, $(*)$ is a scalar multiple of $v_T$ in precisely the following cases:\\

$(*)=v_T$ if

\begin{itemize}
\item $i+j=2b+6$, $i\geqslant b+3-2d$, $j_v\geqslant j-4-4(d-v)$ for all $v$;
\item $i+j=2b+2$, $i\geqslant b+1-2d$ and $j_v \geqslant j-2-4(d-v)$ for all $v$.
\end{itemize}

$(*)=-2v_T$ if
\begin{itemize}
\item $i+j=2b+4$, $i\geqslant b+3-2d$ and $j_v\geqslant j-2-4(d-v)$ for all $v$.
\end{itemize}
\end{prop}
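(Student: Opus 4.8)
The plan is to argue by induction on $d$, the number of descending chains in the normal form of $v_T$. The base case $d=1$ is exactly Lemma \ref{triangularsimple}: among its seven cases, precisely three return a nonzero scalar multiple of $v_T=\Psichaind{j_d}{b+1}z$, namely $(i,j)=(b+1,b+3)$ giving $-2v_T$, $(i,j)=(b+1,b+5)$ giving $v_T$, and $(i,j)=(b-1,b+3)$ giving $v_T$. Their values of $i+j$ are $2b+4$, $2b+6$ and $2b+2$, with scalars $-2$, $+1$ and $+1$, which is exactly the content of the three asserted families once the inequalities $i\geqslant b+3-2d$ and $j_v\geqslant j-4-4(d-v)$ (with the family-dependent constant, $-2$ or $-4$) are specialised to $d=1$, where they collapse to the single constraint $j_d\geqslant b+1$ already present in the hypothesis. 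Every other case of Lemma \ref{triangularsimple} yields either $0$ or a single, strictly longer $\Psi$-expression, which gives the triangularity in the base case.

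For the inductive step I would split off the leftmost chain, writing $v_T=\Psichaind{j_1}{b+3-2d}v_{T'}$ with $v_{T'}=\Psichaind{j_2}{b+5-2d}\cdots\Psichaind{j_d}{b+1}z$ a normal form of length $d-1$, and correspondingly $(*)=\Psichaind{j_1}{b+3-2d}(**)$, where $(**)$ is the same expression with the first chain deleted. The crucial observation is that $(**)$ has exactly the shape treated by the proposition with $d$ replaced by $d-1$ and the \emph{same} pair $(i,j)$, so the induction hypothesis applies; the reindexing $v\mapsto v-1$ turns the level-$(d-1)$ thresholds into $j_v\geqslant j-4-4(d-v)$ for $v\geqslant2$, the shift by $4(d-v)$ being present in the statement for precisely this purpose. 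Hence $(**)$ is either a scalar multiple $c\,v_{T'}$ or strictly longer. In the first case, left multiplication by $\Psichaind{j_1}{b+3-2d}$ merely restores the missing chain, so $(*)=c\,v_T$ with the same scalar. Note, however, that the level-$d$ threshold $i\geqslant b+3-2d$ is weaker by $2$ than the level-$(d-1)$ threshold $i\geqslant b+5-2d$, so this inner-diagonal case covers all of the level-$d$ range except the single boundary value $i=b+3-2d$, which must instead be recovered from the longer-expression case below.

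The step I expect to be the main obstacle is that remaining case, where $(**)$ is strictly longer than $v_{T'}$: here I must push $\Psichaind{j_1}{b+3-2d}$ through a longer normal form using the two cancellation relations of the preceding lemma together with the derived identities $\Psi_k\Psi_kX=-2\Psi_kX$ and $\Psi_k\Psi_{k\pm2}\Psi_kX=\Psi_kX$ on $e(i_\lambda)S_\lambda$ (both consequences of Lemma \ref{psi-actions}), and then read off the length of the result. Generically the outer chain lengthens the expression still further, keeping it strictly longer than $v_T$; the delicate point is exactly the boundary value $i=b+3-2d$, where the bottom index of the outer chain meets the ascending part of the inner expression and a $\Psi_k\Psi_k$- or $\Psi_k\Psi_{k+2}\Psi_k$-type collapse drops the length back to $r(T)$. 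These are the only collapses that create diagonal contributions beyond the inner ones, and matching precisely when they occur, and with which scalar, to the inequalities $i\geqslant b+3-2d$ and $j_v\geqslant j-4-4(d-v)$ (again with the appropriate family constant) is the bookkeeping that carries the weight of the proof.

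Finally, I would verify that no unrecorded possibilities arise: the resonance $i+j\in\{2b+2,2b+4,2b+6\}$ is preserved by the induction, and whenever it fails, or whenever some $j_v$ drops below its threshold, the pushing-through either annihilates the expression or leaves it strictly longer than $v_T$, never a scalar multiple of $v_T$. Tracking the scalar through each collapse shows that it accumulates to $+1$, $-2$, $+1$ in the three families exactly as in the base case, the lone factor of $-2$ arising from a single $\Psi_k\Psi_k$ collapse, which completes both the triangularity and the classification.
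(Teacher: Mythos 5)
Your base case is right, and the reduction of the inner-range cases ($i\geqslant b+5-2d$ in the first and third families, $i\geqslant b+1-2(d-1)$ in the second) to the induction hypothesis is sound once one checks, as you implicitly do, that the $v=1$ inequality is automatic there. But there is a genuine gap at exactly the point you flag as ``the main obstacle'': your induction hypothesis is the proposition itself, which only tells you that $(**)$ is a scalar multiple of $v_{T'}$ \emph{or of some unspecified longer $\Psi$-expression}. In the boundary case $i=b+3-2d$ (and in all the off-resonance cases you need for the triangularity claim), $(**)$ falls into the ``longer'' branch, and to push $\Psichaind{j_1}{b+3-2d}$ through it you must know \emph{which} reduced normal form $(**)$ has --- which chains it contains and where their endpoints sit relative to $j_1$ and $b+3-2d$ --- before you can decide whether a $\Psi_k\Psi_k$ or $\Psi_k\Psi_{k\pm2}\Psi_k$ collapse occurs, how many $\Psi$ terms it deletes, and what scalar it produces. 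The statement being proved does not carry that information, so the induction cannot close as set up; you would have to strengthen the inductive hypothesis to a description of the exact reduced normal form of $(**)$ in every case, and proving that strengthened statement is precisely the bookkeeping you have deferred. Your final paragraph (resonance preservation, the scalars accumulating to $+1$, $-2$, $+1$, the $-2$ coming from a single $\Psi_k\Psi_k$ collapse) is asserted rather than derived, and it is exactly the content of the proposition.

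For comparison, the paper avoids this circularity by working from the other end: it first resolves the interaction of the \emph{innermost} chain $\Psichaind{j_d}{b+1}$ with $\Psichainu{i}{b-1}\Psichaind{j-2}{b+1}$ (Lemma \ref{triangularsimple}), and then propagates the resulting expression leftward through $\Psichaind{j_{d-1}}{b-1},\dots,\Psichaind{j_1}{b+3-2d}$ by a sequence of explicit nested claims, each proved by induction on an auxiliary index, which record the exact reduced form after every cancellation. The final step is a count of how many $\Psi$ terms were appended versus deleted ($\alpha$ versus $\delta$), which is what certifies that the surviving expressions are genuinely longer than $v_T$ except in the listed cases. If you want to keep your leftmost-first induction, you must build an analogue of those claims into the inductive statement; as written, the proof is a plan rather than an argument.
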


\begin{proof}
We will use the previous lemma and work down the cases in the order they appear above. We will always look to put expressions into reduced normal form.\\

\begin{enumerate}[leftmargin=*]
\item Let $d>0$. When $i=b+1$, we can clearly see that we get $(*)=-2v_T$ when $j=b+3$, $(*)=v_T$ when $j=b+5$ and $(*)=0$ otherwise. It is also clear that when $i=b-1$ and $j=b+3$ we have $(*)=v_T$, so in all further cases we will ignore this combination.\\

\item If $i\leqslant b-1$ and $j_d\leqslant j-4$, we must split into two subcases.

\begin{enumerate}
\item First suppose $i\leqslant b+1-2d$. Then we have
\begin{align*}
\Psichaind{j_1}{b+3-2d}\Psichaind{j_2}{b+5-2d}\dots\Psichaind{j_d}{b+1}\cdot\Psichainu{i}{b-1}\Psichaind{j-2}{b+1}z&=\Psichaind{j_1}{b+3-2d}\Psichaind{j_2}{b+5-2d}\dots\Psichaind{j_{d-1}}{b-1}\cdot\Psichainu{i}{b-3}\Psichaind{j_d}{b-1}\Psichaind{j-2}{b+1}z\\
&=\Psichainu{i}{b-1-2d}\Psichaind{j_1}{b+1-2d}\Psichaind{j_2}{b+3-2d}\dots\Psichaind{j_d}{b-1}\Psichaind{j-2}{b+1} z.
\end{align*}

The above expression is reduced and longer than $v_T$.\\

\item If $i\geqslant b+3-2d$, say $i=k_s-2=b-1-2d+2s$ for some $s\geqslant 2$, we have

\[(*)=\underbrace{\Psichaind{j_1}{b+3-2d}\dots\Psichaind{j_{s-1}}{b-1-2(d-s)}}_{=:\Psi^*}\cdot\Psichaind{j_s}{b-1-2(d-s)}\Psichaind{j_{s+1}}{b+1-2(d-s)} \dots\Psichaind{j_d}{b-1}\Psichaind{j-2}{b+1}z.\]

\textbf{\emph{Claim}} Suppose for some $s-1\leqslant u\leqslant d-1$ we have $j_v\geqslant b+3-2(d+s-2v)$ for all $s-1\leqslant v\leqslant u$. Then the above expression is equal to \[\Psi^* \ \Psichaind{j_s}{b+1-2(d-s)}\dots\Psichaind{j_u}{b+1+2(d-u)}\Psichaind{j_{u+1}}{b+7-2(d+s-2u)}\Psichaind{j_{u+2}}{b+3-2(d-u)}\dots\Psichaind{j_d}{b-1}\Psichaind{j-2}{b+1}z.\] If for the maximal such $u$ we have $u\leqslant d-2$, the expression above is reduced and longer than $v_T$.\\

Assuming the claim to be true, we need to look at what happens if the condition in the claim holds for $u=d-1$. In this instance, by the claim we have
\begin{align*}
(*)&=\Psi^* \ \Psichaind{j_s}{b+1-2(d-s)}\dots\Psichaind{j_{d-1}}{b-1}\Psichaind{j_d}{b+3+2(d-s)}\Psichaind{j-2}{b+1}z\\
&=\begin{cases}
-2v_T & \text{if }j=b+5+2(d-s)\text{ and }j_d\geqslant b+3+2(d-s),\\
v_T & \text{if }j=b+7+2(d-s)\text{ and }j_d\geqslant b+3+2(d-s),\\
0 & \text{if }j\geqslant b+9+2(d-s)\text{ and }j_d\geqslant b+3+2(d-s),\\
v_S & \text{otherwise, where $v_S$ is some expression longer than $v_T$.}\\
\end{cases}
\end{align*}

Note that the first case above never actually occurs here, by the condition that $j_d\leqslant j-4$. We can see that we get $(*)=v_T$ precisely when $j_v\geqslant b+3-2(d+s-2v)$ for all $s-1\leqslant v\leqslant d-1$ and $j_d=b+3+2(d-s)=j-4$.\\

Finally, we prove the claim, by induction on $u$. When $u=s-1$, we have that $j_{s-1}\geqslant b-1-2(d-s)$ (which we already knew a priori) and $j_s=b+1-2(d-s)$. Then \[\Psi^* \ \cdot\Psichaind{b+1-2(d-s)}{b-1-2(d-s)}\Psichaind{j_{s+1}}{b+1-2(d-s)}\dots\Psichaind{j_d}{b-1}\Psichaind{j-2}{b+1}z=\Psi^* \ \cdot \Psichaind{j_{s+1}}{b+1-2(d-s)}\dots\Psichaind{j_d}{b-1}\Psichaind{j-2}{b+1}z\] and the claim holds.

Suppose the claim is true for some $s-1\leqslant u\leqslant d-2$, and that $j_v\geqslant b+3-2(d+s-2v)$ for all $s-1\leqslant v\leqslant u+1$. Then by induction, we have
\begin{align*}
&\Psi^* \ \cdot\Psichaind{b+1-2(d-s)}{b-1-2(d-s)}\Psichaind{j_{s+1}}{b+1-2(d-s)}\dots\Psichaind{j_d}{b-1}\Psichaind{j-2}{b+1}z\\
&=\Psi^* \ \underbrace{\Psichaind{j_s}{b+1-2(d-s)}\dots\Psichaind{j_u}{b+1+2(d-u)}}_{=:\Psi^\dagger}\Psichaind{j_{u+1}}{b+7-2(d+s-2u)}\Psichaind{j_{u+2}}{b+3-2(d-u)}\dots\Psichaind{j_d}{b-1}\Psichaind{j-2}{b+1}z\\
&=\Psi^*\Psi^\dagger \ \Psichaind{j_{u+1}}{b+7-2(d+s-2u)}\Psichaind{j_{u+2}}{b+11-2(d+s-2u)}\Psichaind{b+5-2(d+s-2u)}{b+3-2(d-u)}\Psichaind{j_{u+3}}{b+5-2(d-u)}\dots\Psichaind{j_d}{b-1}\Psichaind{j-2}{b+1}z\\
&\\
& \qquad \text{since $j_{u+1}\geqslant b+7-2(d+s-2u)$ by hypothesis}\\
&\\
&=\Psi^*\Psi^\dagger \ \Psichaind{j_{u+1}}{b+3-2(d-u)}\Psichaind{j_{u+2}}{b+11-2(d+s-2u)}\Psichaind{j_{u+3}}{b+5-2(d-u)}\dots\Psichaind{j_d}{b-1}\Psichaind{j-2}{b+1}z
\end{align*}

and the claim is proved.

\end{enumerate}
\item Next, we look at the final case, $i\leqslant b-1$, $j_d\geqslant j-2$ and $j\geqslant b+5$. We have that \[(*)=\Psichaind{j_1}{b+3-2d}\Psichaind{j_2}{b+5-2d}\dots\Psichaind{j_{d-1}}{b-1}\cdot\Psichainu{i}{b-3}\Psichaind{j-6}{b-1}\Psichaind{j_d}{b+1}z\]

Once again, we split into subcases.

\begin{enumerate}
\item First suppose $i\leqslant b+1-2d$. Then \[(*)=\Psichainu{i}{b-1-2d}\Psichaind{j_1}{b+1-2d}\Psichaind{j_2}{b+3-2d}\dots\Psichaind{j_{d-1}}{b-3}\Psichaind{j-6}{b-1}\Psichaind{j_d}{b+1}z.\]

\textbf{\emph{Claim}} Suppose for some $0\leqslant u\leqslant d-1$ we have  $j<j_{d-v}+4(v+1)$ for all $0\leqslant v\leqslant u$ and $j\geqslant b+3+2v$ for all $0\leqslant v\leqslant u$. Then \[(*)=\Psichainu{i}{b-1-2d}\Psichaind{j_1}{b+1-2d}\Psichaind{j_2}{b+3-2d}\dots\Psichaind{j_{d-u-1}}{b-3-2u}\Psichaind{j-6-4u}{b-1-2u}\Psichaind{j_{d-u}}{b+1-2u}\dots\Psichaind{j_d}{b+1}z\] and this expression is of length $2(u+1)$ less than the length of $(*)$. Furthermore, if we take the maximal such $u$ and have $u\leqslant d-2$ and $j\geqslant b+5+2u$, it is reduced. If $j=b+3+2u$, the expression is equal to 0.\\

We prove the claim by induction on $u$. If $u=0$, the result follows immediately from Lemma \ref{triangularsimple}. Now suppose the claim holds for some $0\leqslant u\leqslant d-2$, and that $j<j_{d-v}+4(v+1)$ for all $0\leqslant v\leqslant u+1$, but $j\geqslant j_{d-u-2}+4(u+3)$ (if $u\leqslant d-3$). Then by induction, we have

\begin{align*}
(*)&=\underbrace{\Psichainu{i}{b-1-2d}\Psichaind{j_1}{b+1-2d}\Psichaind{j_2}{b+3-2d}\dots\Psichaind{j_{d-u-2}}{b-5-2u}}_{=:\Psi^*}\Psichaind{j_{d-u-1}}{b-3-2u}\Psichaind{j-6-4u}{b-1-2u}\Psichaind{j_{d-u}}{b+1-2u}\dots\Psichaind{j_d}{b+1}z\\
&=\Psi^*\Psichaind{j_{d-u-1}}{j-4-4u}\Psi_{j-6-4u}\Psi_{j-8-4u}\Psi_{j-6-4u}\Psichaind{j-10-4u}{b-3-2u}\Psichaind{j-8-4u}{b-1-2u}\Psichaind{j_{d-u}}{b+1-2u}\dots\Psichaind{j_d}{b+1}z\\
&=\Psi^*\Psichaind{j-10-4u}{b-3-2u}\Psichaind{j_{d-u-1}}{b-1-2u}\Psichaind{j_{d-u}}{b+1-2u}\dots\Psichaind{j_d}{b+1}z,
\end{align*}

which is the claimed expression. In the induction step, 2 $\Psi$ terms have been deleted, which proves the length part of the claim. It is clear that if $j<b+5+2u$ then the expression in the claim is 0 and likewise that when $u\leqslant d-2$ (and $j\geqslant b+5+2u$), we have a reduced expression.

Now, let $u$ be maximal under the conditions in the claim. First, suppose that $u\leqslant d-2$. By the claim, we can assume that $j\geqslant b+5+2u$. This implies that $\Psichaind{j-2}{b+1}$ has length at least $u+2$. Similarly, $i\leqslant b+1-2d$ and $u\leqslant d-2$ imply $\Psichainu{i}{b-1}$ also has length at least $u+2$. So by the claim, once $(*)$ has been written in a reduced form, it has length at least 2 more than $v_T$.

But what if $u=d-1$? The above claim tells us that \[(*)=\Psichainu{i}{b-1-2d}\Psichaind{j-2-4d}{b+1-2d}v_T.\] This is zero  unless $j\geqslant b+3+2d$, in which case we have a (reduced) longer expression than $v_T$, or $i\geqslant b+1-2d$. Note that in the latter case, we in fact have $i=b+1-2d$ because of the conditions on the subcase we are looking at. Looking at this case, we assume $j<b+3+2d$, since $j\geqslant b+3+2d$ yields an expression longer than $v_T$. Under these conditions, we have $(*)=v_T$.

\item Finally, suppose that $i\geqslant b+3-2d$. Say $i=k_s-2=b-1-2(d-s)$ for some $s\geqslant 2$. Then \[(*)=\Psichaind{j_1}{b+3-2d}\dots\Psichaind{j_{s-1}}{b-1-2(d-s)}\cdot\Psichaind{j_s}{b-1-2(d-s)}\Psichaind{j_{s+1}}{b+1-2(d-s)}\dots\Psichaind{j_d}{b-1}\Psichaind{j-2}{b+1}z.\]

\textbf{\emph{Claim 1}} Suppose we have $-1\leqslant u\leqslant d-s-1$ with $j_{s+v}\geqslant b+3-2(d-s)+4v$ for all $-1\leqslant v\leqslant u$. Let \[\Psi^*:=\Psichaind{j_1}{b+3-2d}\dots\Psichaind{j_{s+u}}{b+1-2(d-(s+u))}.\] Then \[(*)=\Psi^*\Psichaind{j_{s+u+1}}{b+7-2(d-s-2u)}\cdot\Psichaind{j_{s+u+2}}{b+3-2(d-(s+u))}\Psichaind{j_{s+u+3}}{b+5-2(d-(s+u))}\dots\Psichaind{j_d}{b-1}\Psichaind{j-2}{b+1}z.\] The above claim is proved by a simple but tedious induction, in the spirit of previous claims in this proof. Now first suppose we have $u=d-s-1$ satisfying the conditions in the claim, but also $j_d\geqslant b+3+2(d-s)$. Then

\begin{align*}
(*) &= \Psichaind{j_1}{b+3-2d}\dots\Psichaind{j_{d-1}}{b-1}\Psichaind{j_d}{b+3+2(d-s)} \Psichaind{j-2}{b+1} z\\
&= \begin{cases}
-2T' & \text{if $j=b+5+2(d-s)$,}\\
T' & \text{if $j=b+3+2(d-s)$ or $j=b+7+2(d-s)$,}\\
0 & \text{otherwise.}\\
\end{cases}
\end{align*}

Otherwise, take $u$ maximal, satisfying the conditions of claim 1. We have \[(*)=\Psi^*\cdot\Psichaind{j_{s+u+2}}{b+3-2(d-(s+u))}\Psichaind{j_{s+u+3}}{b+5-2(d-(s+u))}\dots\Psichaind{j_d}{b-1}\Psichaind{j-2}{b+1}z,\] by the claim. Note that for these conditions on $u$ to hold, we have $j_{s+u}=b+3-2(d-s)+4u$ and $j_{s+u+1}=b+5-2(d-s)+4u$.

\textbf{\emph{Claim 2}} Let $0\leqslant r\leqslant d-s-u-2$ be such that $j_{d-v}\geqslant j-2-4v$ for all $0\leqslant v\leqslant r$. Then \[(*)=\Psi^*\Psichaind{j_{s+u+2}}{b+3-2(d-(s+u))}\dots\Psichaind{j_{d-r-1}}{b-3-2r}\Psichaind{j-6-4r}{b-1-2r}\Psichaind{j_{d-r}}{b+1-2r}\Psichaind{j_{d-r+1}}{b+3-2r}\dots\Psichaind{j_d}{b+1}z.\] If r is maximal such that $j_{d-v}\geqslant j-2-4v$ for all $0\leqslant v\leqslant r$ and $r\leq d-s-u-3$, then this expression is reduced.

Again, this claim can be proved by induction as with the previous claims. Note that the above term is zero unless  $j\geqslant b+5+2r$.\\

Whenever $r\leqslant d-s-u-3$, the reduced expression above is longer than $v_T$. To see why, note that we have the condition $j_{s+u+1}=b+5-2(d-s-2u)$ from claim 1. Since $j_{i+1}\geqslant j_i+2$, this yields $j_{d-r-1}\geqslant b+1+2(u-r)$. Now, we have assumed that $j_{d-r-1}<j-6-4r$, so we can combine these inequalities to yield $j\geqslant b+9+2(u+r)$.

We now have enough information to compare lengths. To leave this reduced form, we first deleted $2u+4$ $\Psi$ terms from $(*)$ to arrive at the result from claim 1. Next, we deleted $2r+2$ $\Psi$ terms to arrive at the result of claim 2. So in total, we have deleted $2(r+u+3)=:\delta$ $\Psi$ terms from $(*)$ to leave a reduced expression.

Now, how many $\Psi$ terms did we append to $v_T$ in the definition of $(*)$? Call the number of terms appended $\alpha$. Since $i=b-1-2(d-s)$, $\Psi \hspace{-4pt} \underset{\scriptscriptstyle{i}}{\overset{\scriptscriptstyle{b-1}}{\uparrow}}$ is a product of $d-s+1$ $\Psi$ terms. Since $j\geqslant b+9+2(u+r)$, $\Psichaind{j-2}{b+1}$ has length at least $4+u+r$. So, $\alpha\geqslant d+u+r-s+5$. By the definition of $r$, we have that $d-s\geqslant r+u+2$, so $\alpha\geqslant 2(u+r+3)+1>\delta$, and we are done.

Now suppose $r=d-s-u-2$ satisfies the conditions of claim 2, and we are left with a reduced expression. The claim tells us that the reduced expression is \[(*)=\Psi^*\Psichaind{j+2-4(d-(s+u))}{b+3-2(d-(s+u))}\Psichaind{j_{s+u+2}}{b+5-2(d-(s+u))}\dots\Psichaind{j_d}{b+1} z.\] Now, using the fact that $j_{s+u}=b+3-2(d-s)+4u$, we see that for this expression to be reduced we have $j\geqslant b+3+2(d-s)$. Now, arguing as above, we have $\delta=2(r+u+3)=2(d-s+1)$, the length of $\Psichainu{i}{b-1}$ is once again $d-s+1$ and the length of $\Psichaind{j-2}{b+1}$ is at least $d-s+1$. Hence $\alpha\geqslant\delta$, with equality precisely when $j=b+3+2(d-s)$, in which case we have $(*)=v_T$.

Now suppose $r=d-s-u-2$ satisfies the conditions of claim 2, but we are not left with a reduced expression. Then \[(*)=\Psi^* \Psichaind{j+2-4(d-(s+u))}{b+3-2(d-(s+u))}\underbrace{\Psichaind{j_{s+u+2}}{b+5-2(d-(s+u))}\dots\Psichaind{j_d}{b+1}}_{=:\Psi^{**}}z\] which is zero unless $j\geqslant b+1+2(d-(s+u))$, and we have the following:

\textbf{\emph{Claim 3}} Let $-1\leqslant x\leqslant s+u-1$ be such that $j_{s+u-v}\geqslant j+2-4(d+v-(s+u))$ for all $-1\leqslant v\leqslant x$. Then \[(*)=\Psichaind{j_1}{b+3-2d}\dots\Psichaind{j_{s+u-x-1}}{b-1-2(d+x-(s+u))}\Psichaind{j-2-4(d+x-(s+u))}{b+1-2(d+x-(s+u))}\Psichaind{j_{s+u-x}}{b+3-2(d+x-(s+u))}\dots\Psichaind{j_{s+u}}{b+3-2(d-(s+u))}\Psi^{**}z.\] Note that if $x\leqslant s+u-2$, this term is zero unless $j\geqslant b+3+2(d+x-(s+u))$.

Take $x$ to be the maximal such that the conditions in claim 3 are met. First, suppose $x\leqslant s+u-2$. Then $j_{s+u-x}\geqslant j+2-4(d+x-(s+u))$ and $j_{s+u-x-1}<j-2-4(d+x-(s+u))$. When $x\leqslant u-1$, we have our assumption in using claim 1 that $j_{s+u-x-1}\geqslant b-1-2(d-s)+4(u-x)$. Comparing these inequalities yields $j\geqslant b+3+2(d-s)$.

Similarly if $x\geqslant u$, $j_{s+u-x-1}\geqslant b-1-2(d+x-(s+u))$ can be read off from the expression in claim 3. But this yields $j\geqslant b+3+2((d-s)+(x-u))\geqslant b+3+2(d-s)$. Now in either case,

\begin{align*}
b+3-2(d-s)+4u&=j_{s+u}\text{ by the comment after claim 1,}\\
&\geqslant j+2-4(d-(s+u))\text{ by the conditions in claim 3,}\\
&\geqslant b+5-2(d-s)+4u.
\end{align*}

We have a contradiction, and so if $j_{s+u}\geqslant j+2-4(d-(s+u))$ but $j_1<j+6-4d$, we must have $(*)=0$.

Now suppose $x=s+u-1$. Then we have $j\geqslant b-1+2d$, or else $(*)=0$ and we're done. So

\begin{align*}
b+3-2d+2s+4u&=j_{s+u}\text{ by the comment after claim 1,}\\
&\geqslant j+2-4(d-(s+u))\text{ by the conditions in claim 3,}\\
&\geqslant b+1-2d+4(s+u).
\end{align*}

This implies $s=1$, and so $i=b+1-2d$. But this breaks the initial conditions of the subcase we are in, so we again have a contradiction. So in fact we never get terms that look like the expression in claim 3; $\Psi^*$ remains intact in the final reduced expression for $(*)$, if it is non-zero.
\end{enumerate}
\end{enumerate}

If we collect the cases where $(*)$ is equal to a scalar multiple of $v_T$, we get the following list:

$(*)=v_T$ if
\begin{itemize}
\item $i=b+1$, $j=b+5$, $d>0$ -- from case 1;
\item $b+3-2d\leqslant i\leqslant b-1$, $j=2b+6-i$, $j_d=2b+2-i$, and $j_v\geqslant j-4-4(d-v)$ for all $v$ -- from case 2(b);
\item $b+3-2d\leqslant i\leqslant b-1$, $j=2b+6-i$ and $j_v\geqslant j-4-4(d-v)$ for all $v$ -- from case 3(b);\\
\item $i=b-1$, $j=b+3$, $d>0$ -- from case 1;
\item $i=b+1-2d$, $j=b+1+2d\geqslant b+5$ and $j_v\geqslant j-2-4(d-v)$ for all $v$ -- from case 3(a);
\item $b+3-2d\leqslant i$, $j=2b+2-i\geqslant b+5$, $j_d\geqslant j-2$ and $j_v\geqslant j-2-4(d-v)$ for all $v$ -- from case 3(b);
\item $b+3-2d\leqslant i$, $j=2b+2-i\geqslant b+5$, $j_d\geqslant j-2$ and $j_v\geqslant j-4(d-v)$ for all $v$ -- from case 3(b).
\end{itemize}

These conditions can be written compactly as the first and second conditions in the statement of the proposition.

$(*)=-2v_T$ if
\begin{itemize}
\item $i=b+1$, $j=b+3$, $d>0$ -- from case 1;
\item $b+3-2d\leqslant i\leqslant b-1$, $j=2b+4-i\geqslant b+5$, and $j_v\geqslant j-2-4(d-v)$ for all $v$ -- from case 3(b).
\end{itemize}

These two conditions can be written compactly as the final condition in the statement of the proposition.
\end{proof}

The above result immediately leads to the following crucial fact.

\begin{cor}
Order $\mathscr{D}$ so that $v_U$ comes after $v_T$ whenever $r(U)>r(T)$. With respect to this ordering, the action of $f$ on $e(i_\lambda)S_\lambda$ is lower triangular. In particular, for each $T\in\Dom(\lambda)$, the coefficient of $v_T$ in $f(v_T)$ is an eigenvalue of $f$.
\end{cor}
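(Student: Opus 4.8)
The plan is to extract triangularity directly from Proposition~\ref{triangular}, which has already carried out all the genuinely hard computation. Fix $T\in\Dom(\lambda)$ and write $v_T$ in reduced normal form as $\Psichaind{j_1}{b+3-2d}\Psichaind{j_2}{b+5-2d}\dots\Psichaind{j_d}{b+1}z$. Using the formula for $f$ from Proposition~\ref{endomorphismf} together with the observation that $v_{T_{i,j}}$ has normal form $\Psichainu{i}{b-1}\Psichaind{j-2}{b+1}z$, I would first expand
\[
f(v_T)=\sum_{\substack{3\leqslant i\leqslant b+1\\ b+3\leqslant j\leqslant n\\ i,j\text{ odd}}}\mfrac{i-1}{2}\cdot\mfrac{n+2-j}{2}\,\Psichaind{j_1}{b+3-2d}\dots\Psichaind{j_d}{b+1}\Psichainu{i}{b-1}\Psichaind{j-2}{b+1}z,
\]
so that each summand is, up to the scalar $\tfrac{i-1}{2}\cdot\tfrac{n+2-j}{2}$, precisely one of the expressions $(*)$ studied in Proposition~\ref{triangular}.

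Next I would invoke Proposition~\ref{triangular} termwise: it asserts that every such $(*)$ is a scalar multiple of either $v_T$ itself or of a $\Psi$-expression strictly longer than $v_T$. Each of the latter is already reduced and in normal form, and so is either zero or equal to some $v_S\in\mathscr{D}$ with $r(S)>r(T)$. Consequently $f(v_T)$ lies in the span of $v_T$ together with those basis vectors $v_S$ satisfying $r(S)>r(T)$; crucially, no basis element of the same length as $v_T$ other than $v_T$ itself can appear. Since $f$ preserves $e(i_\lambda)S_\lambda$, this shows that the matrix of $f$ on $e(i_\lambda)S_\lambda$, with respect to $\mathscr{D}$ ordered by increasing length $r$, is lower triangular, with any tie-breaking choice among tableaux of equal length being immaterial.

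Finally, for the eigenvalue statement I would appeal to the standard fact that a lower-triangular matrix $M=(m_{ST})$ satisfies $\det(M-xI)=\prod_{T}(m_{TT}-x)$, so each diagonal entry $m_{TT}$ is a root of the characteristic polynomial and hence an eigenvalue over any field. The diagonal entry indexed by $T$ is exactly the coefficient of $v_T$ in $f(v_T)$, which proves the claim. The only point requiring a little care here is the reduction of the ``longer $\Psi$-expressions'' produced by Proposition~\ref{triangular} back into the basis $\mathscr{D}$; but since those expressions are reduced and in normal form, they are by construction already of the shape $v_S$ (or zero), so no further rewriting is needed and this step is routine. The substantive difficulty lies entirely in Proposition~\ref{triangular}, which is already established.
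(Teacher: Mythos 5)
Your proposal is correct and follows exactly the route the paper intends: the paper derives this corollary directly from Proposition~\ref{triangular} with no further argument, and your write-up simply makes explicit the termwise application of that proposition to the expansion of $f(v_T)$, the observation that the longer reduced normal-form expressions are basis elements $v_S$ with $r(S)>r(T)$ (or zero), and the standard fact that the diagonal entries of a triangular matrix are eigenvalues. Nothing is missing and nothing differs in substance from the paper's reasoning.
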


\begin{prop}\label{eigenvalues}
$f$ has the eigenvalues \[-\mfrac{d}{2}(n-2d+1)\quad\text{for } \ d=0,1,\dots,b/2.\]
\end{prop}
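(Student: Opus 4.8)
The plan is to read the eigenvalues of $f$ directly off the triangular description obtained in the corollary above: the eigenvalues are exactly the diagonal entries of the matrix of $f$ acting on $e(i_\lambda)S_\lambda$, i.e.\ the scalars $c_T$ given by the coefficient of $v_T$ in $f(v_T)$. So it suffices, for each $d\in\{0,1,\dots,b/2\}$, to produce a single domino tableau whose diagonal coefficient equals $-\frac{d}{2}(n-2d+1)$.

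For each such $d$ I would take $U_d$ to be the tableau obtained by sliding the $d$ largest dominoes $[n-2d+1,n-2d+2],\dots,[n-1,n]$ into the foot of the leg; in normal form this is $v_{U_d}=\Psichaind{n-2d}{b+3-2d}\Psichaind{n-2d+2}{b+5-2d}\cdots\Psichaind{n-2}{b+1}z$, so that $j_v=n-2-2(d-v)$ for $v=1,\dots,d$. One checks that $U_d\in\Dom(\lambda)$ (it is standard, with exactly $d$ altered leg-dominoes) and, crucially, that with these maximal values of the $j_v$ every inequality $j_v\geqslant j-c-4(d-v)$ ($c\in\{2,4\}$) appearing in Proposition~\ref{triangular} reduces to $j\leqslant n-2+c+2(d-v)$, which holds automatically since $j\leqslant n$. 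Thus none of the $j_v$-conditions in that proposition can fail for $U_d$.

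With the constraints satisfied, $c_{U_d}$ is the full sum predicted by Proposition~\ref{triangular}: a $+1$ contribution from each $(i,j)$ with $i+j=2b+6$ or $i+j=2b+2$ and a $-2$ contribution from each $(i,j)$ with $i+j=2b+4$, each weighted by the scalar $\frac{i-1}{2}\cdot\frac{n+2-j}{2}$ from the definition of $f$ in Proposition~\ref{endomorphismf}. Substituting $i=b+1-2t$ turns these three families into sums of $p_tq_t$, $p_t(q_t+1)$ and $p_t(q_t+2)$ with $p_t=\frac{b}{2}-t$ and $q_t=\frac{a-3}{2}-t$; the combination $\sum p_tq_t-2\sum p_t(q_t+1)$ collapses to $-\sum p_t(q_t+2)$, and the whole expression telescopes to $p_d(q_d+2)-p_0(q_0+2)=(\tfrac{b}{2}-d)(\tfrac{a+1}{2}-d)-\tfrac{b}{2}\cdot\tfrac{a+1}{2}=-\tfrac{d}{2}(n-2d+1)$, using $a+b=n$.

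The only delicate point — and the one place the argument could go wrong — is the interaction with the two boundaries $i=1$ and $j=n+2$, where the natural summation ranges overshoot the admissible region $3\leqslant i\leqslant b+1$, $b+3\leqslant j\leqslant n$. The saving grace is that the weights $\frac{i-1}{2}$ and $\frac{n+2-j}{2}$ vanish precisely at $i=1$ and at $j=n+2$, so the out-of-range terms contribute nothing and the telescoping may be carried out over the full range without alteration. Once this is checked, each value $-\frac{d}{2}(n-2d+1)$ arises as the diagonal entry $c_{U_d}$, hence as an eigenvalue of $f$, and the proof is complete.
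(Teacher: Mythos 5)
Your proposal is correct and follows essentially the same route as the paper: it selects the same extremal domino tableau (the one with $j_v=n-2-2(d-v)$), invokes the triangularity corollary to read the eigenvalue off the diagonal, checks that the $j_v$-inequalities of Proposition~\ref{triangular} hold automatically for this tableau, and sums the weighted contributions from the three families of pairs $(i,j)$. The only differences are presentational — you organise the algebra as a telescoping sum where the paper simplifies each summand to $8r-2(n-1)$ directly — and both yield $-\tfrac{d}{2}(n-2d+1)$.
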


\begin{proof}
Fix some $d\in\{0,1,\dots,b/2\}$. Let \[v_T=\Psichaind{n-2d}{b+3-2d}\dots\Psichaind{n-4}{b-1}\Psichaind{n-2}{b+1}z.\] Using the three bullet points in Proposition \ref{triangular}, we will compute the eigenvalue $-\mfrac{d}{2}(n-2d+1)$ as the coefficient of $v_T$ in $f(v_T)$. First, note that by choice of $T$ the inequality on $j_d$ for each bullet point is the strongest. So to check when the family of inequalities at the end of each bullet point holds, it suffices to only verify the inequality on $j_d$.

If $i+j=2b+6$ and $i\geqslant b+3-2d$, then we claim that the inequalities in the first bullet point are always satisfied by $v_T$. For this, we need $j_d\geqslant 2b+2-i$. Using that $d\leqslant b/2$ and $n>2b$ we also get that $2b+2-i\leqslant b-1+2d\leqslant 2b-1\leqslant n-2$. So the inequalities always hold in the case of the first bullet point.

Now, if $i+j=2b+2$ and $i\geqslant b+1-2d$, we claim that the inequalities in the second bullet point are always satisfied by $v_T$. To see this, we must show that $j_d\geqslant 2b-i$. We have $2b-i\leqslant b-1+2d\leqslant 2b-1\leqslant n-2$ and so the inequalities always hold in the case of the second bullet point.

Finally, $i+j=2b+4$ and $i\geqslant b+3-2d$, then we claim that the inequalities in the third bullet point are always satisfied by $v_T$. We need $j_d\geqslant 2b+2-i$ but have $2b+2-i\leqslant b-1+2d\leqslant 2b-1\leqslant n-2$ and we are done.

So now we only need to verify which pairs $(i,j)$ satisfy the first two conditions in each bullet point. For the first bullet point, we have the pairs $(b+3-2d,b+3+2d),(b+5-2d,b+1+2d),\dots,(b+1,b+5)$. For the second, we have the pairs $(b+1-2d,b+1+2d),(b+3-2d,b-1-2d),\dots,(b-1,b+3)$. For the third, we have the pairs $(b+3-2d,b+1+2d),(b+5-2d,b-1+2d),\dots,(b+1,b+3)$.

Recall that the coefficient of $\Psichainu{i}{b-1}\Psichaind{j-2}{b+1}$ in $f(z)$ is $\mfrac{i-1}{2}\cdot\mfrac{n+2-j}{2}$. Hence the coefficient of $v_T$ in $f(v_T)$ is

\begin{align*}
&\mfrac{1}{4}\sum_{r=0}^{d-1}(b-2r)(a-3-2r)+(b-2-2r)(a-1-2r)-2(b-2r)(a-1-2r)\\
&=\mfrac{1}{4}\sum_{r=0}^{d-1}8r-2(n-1)\\
&=-\mfrac{1}{2}d(n-1)+2\sum_{r=0}^{d-1}r\\
&=-\mfrac{d}{2}(n-2d+1).\qedhere
\end{align*}
\end{proof}

\begin{rem}
The sequence of eigenvalues given above is \[0,-\mfrac{(n-1)}{2},-(n-3),-\mfrac{3}{2}(n-5),\dots,-\mfrac{b}{4}(a+1).\] If we write $a=2r+1$ and $b=2s$ then this sequence can be rewritten as \[0,-(r+s),-2(r+s-1),-3(r+s-2),\dots,-s(r+1).\]
\end{rem}

\begin{thm}
Suppose $\Char\mathbb{F}\neq 2$. Then $S_{(a,1^b)}$ is decomposable if either $b\geqslant 4$ or $b=2$ with $\Char\mathbb{F}\nmid\mfrac{n-1}{2}$.
\end{thm}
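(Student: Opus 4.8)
The plan is to deduce decomposability from the generalised eigenspace decomposition recorded in the remark following Proposition \ref{endomorphismf}. By the corollary following Proposition \ref{triangular}, $f$ acts triangularly on $e(i_\lambda)S_\lambda$ (with respect to the length ordering on $\mathscr{D}$), so all eigenvalues of $f$ lie in $\mathbb{F}$, its characteristic polynomial splits, and $S_\lambda=\bigoplus_x E_x$ with each $E_x$ a \emph{non-zero} $\mathscr{H}$-submodule (non-zero exactly because $x$ is an eigenvalue). Hence it suffices to exhibit two distinct eigenvalues of $f$: this immediately produces at least two non-trivial summands. Throughout we use the standing assumptions of the section, namely that $a$ is odd and $b$ is even (and so $n$ is odd); the remaining parities reduce to this case via Theorems \ref{conjugate} and \ref{branching}, so it is enough to treat this one.

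Next I would import the eigenvalues computed in Proposition \ref{eigenvalues}. Writing $a=2r+1$ and $b=2s$, the remark following that proposition rewrites them as $\mu_d:=-d(r+s-d+1)$ for $d=0,1,\dots,s$, where $r+s=\frac{n-1}{2}$ and $\mu_0=0$. For the case $b=2$ we have $s=1$, so the only available eigenvalues are $\mu_0=0$ and $\mu_1=-(r+1)=-\frac{n-1}{2}$. These are distinct precisely when $\frac{n-1}{2}\neq 0$ in $\mathbb{F}$, i.e. when $\Char\mathbb{F}\nmid\frac{n-1}{2}$ (automatic in characteristic $0$), which yields the $b=2$ half of the statement.

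For $b\geqslant 4$ we have $s\geqslant 2$, so $\mu_0,\mu_1,\mu_2$ are all genuine eigenvalues, and I would argue by a two-line case split on whether $r+s$ vanishes in $\mathbb{F}$. If $r+s\neq 0$ in $\mathbb{F}$, then $\mu_1=-(r+s)\neq 0=\mu_0$ and we are done. If instead $r+s=0$ in $\mathbb{F}$, then $\mu_2=-2(r+s-1)=-2(-1)=2$, which is non-zero since $\Char\mathbb{F}\neq 2$, so $\mu_2\neq 0=\mu_0$. Either way $f$ has two distinct eigenvalues and $S_\lambda$ decomposes, with no further restriction on the characteristic. This last case is the only real subtlety, and it is where I expect the argument to need care: the hypothesis $\Char\mathbb{F}\neq 2$ is used exactly to keep $\mu_2=2$ non-zero when $r+s\equiv 0$, and the presence of the extra eigenvalue $\mu_2$ (available only once $s\geqslant 2$, i.e. $b\geqslant 4$) is precisely what lets us drop the divisibility condition that is unavoidable in the $b=2$ case.
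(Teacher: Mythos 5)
Your proposal is correct and follows essentially the same route as the paper: invoke the generalised eigenspace decomposition from the remark after Proposition \ref{endomorphismf}, then exhibit two distinct eigenvalues among $0$, $-\frac{n-1}{2}$ and $-(n-3)$ from Proposition \ref{eigenvalues}. Your explicit case split on whether $r+s$ vanishes in $\mathbb{F}$ is just a rephrasing of the paper's observation that forcing all three eigenvalues to coincide would require $\Char\mathbb{F}=2$.
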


\begin{proof}
By the remark after Proposition \ref{endomorphismf}, it suffices to show that $f$ has at least two distinct eigenvalues. When $s\geqslant 2$, $0$, $-\mfrac{(n-1)}{2}$ and $-(n-3)$ are three eigenvalues of $f$; if $S_{(a,1^b)}$ were indecomposable, these would be equal. Since $p\neq 2$, this is impossible, and we have the desired result.

When $b=2$ and $\Char\mathbb{F}\nmid \mfrac{n-1}{2}$, we have the distinct eigenvalues $0$ and $-\mfrac{(n-1)}{2}$ and we are done.
\end{proof}

It remains to resolve the case $a=2r+1$, $b=2$ when $\Char\mathbb{F}\mid\mfrac{n-1}{2}$. We have \[f(z)=r\cdot v_{T_{3,b+3}}+(r-1)\cdot v_{T_{3, b+5}}+\dots+v_{T_{3,n}}=\sum_{c=1}^rc\cdot\Psichaind{3+2(r-c)}{3}z.\]

When $b=2$ and $\Char\mathbb{F}\mid\mfrac{n-1}{2}$, we will prove that $S_\lambda$ is indecomposable by showing that $\End_\mathscr{H}(S_\lambda)$ has no non-trivial idempotents.

\begin{lem}
Suppose $a$ is odd. Then $\{I,f\}$ is a basis of $\End_\mathscr{H}(S_{(a,1^2)})$, where $I$ is the identity map on $S_{(a,1^2)}$.
\end{lem}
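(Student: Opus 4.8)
The plan is to realise $\End_\mathscr{H}(S_{(a,1^2)})$ as the solution space of an explicit linear system and then count its dimension. By the observation opening this section, a map $g$ lies in $\End_\mathscr{H}(S_{(a,1^2)})$ if and only if $g(z)=\alpha_0 z+\sum_j\alpha_j v_{T_{3,j}}$, with $j$ ranging over the odd integers $5\leqslant j\leqslant n$ and $\alpha_j$ written for $\alpha_{T_{3,j}}$, subject to $\psi_k g(z)=0$ for every odd $k$ with $5\leqslant k\leqslant n-2$. Since the identity $I$ and the endomorphism $f$ of Proposition \ref{endomorphismf} both lie in this space and are visibly linearly independent ($I$ has $\alpha_0\neq0$ and all other coefficients zero, whereas $f(z)$ has $\alpha_0=0$), it suffices to prove that the space of admissible coefficient tuples is exactly two-dimensional.

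First I would pin down the action of each $\psi_k$ on $\mathscr{D}$. Recalling that $v_{T_{3,j}}=\Psichaind{j-2}{3}z$ when $b=2$, repeated use of Lemma \ref{commuting} together with parts 1 and 2 of Lemma \ref{psi-actions} gives, for odd $k$ with $5\leqslant k\leqslant n-2$,
\[\psi_k v_{T_{3,k}}=\psi_k v_{T_{3,k+4}}=:Y_k,\qquad \psi_k v_{T_{3,k+2}}=-2Y_k,\]
and $\psi_k v_{T_{3,j}}=0$ for every other $j$. Indeed, for $j\leqslant k-2$ the generator $\psi_k$ commutes past all $\Psi$-terms and then annihilates $z$; for $j\geqslant k+6$ one commutes $\psi_k$ rightwards until it meets $\Psi_{k+2}\Psi_k$, applies Lemma \ref{psi-actions}(2) to replace this by $\psi_k$, and the outstanding $\Psi_{k+4}$ may then be pushed through to act on $z$, giving $\Psi_{k+4}z=0$; the cases $j=k+2,k+4$ are immediate from Lemma \ref{psi-actions}(1),(2). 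The decisive point is that $Y_k\neq0$: in $T_{3,k}$ the entry $k$ lies in the leg and $k+1$ in the arm, so Lemma \ref{killvt} gives $Y_k=v_U$ for the standard tableau $U$ obtained by interchanging $k$ and $k+1$, a genuine basis vector.

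With this action table, the conditions $\psi_k g(z)=0$ become explicit scalar relations. As $\psi_k z=0$ for $k\neq3$, the coefficient $\alpha_0$ never appears and hence remains free. For $5\leqslant k\leqslant n-4$ the condition reads $\alpha_k-2\alpha_{k+2}+\alpha_{k+4}=0$, while $k=n-2$ contributes the boundary relation $\alpha_{n-2}-2\alpha_n=0$; here I use that $\Char\mathbb{F}\neq2$ is in force, so that the factor $-2$ is nonzero and $Y_k$ may be cancelled. The vanishing second differences force the sequence $(\alpha_5,\alpha_7,\dots,\alpha_n)$ to be arithmetic, $\alpha_{5+2l}=\alpha_5+l(\alpha_7-\alpha_5)$, and substituting this into the boundary relation (with $a=2r+1$) forces $\alpha_5=-r(\alpha_7-\alpha_5)$, leaving a single free parameter (the cases $r\leqslant2$, where the constraint set degenerates, being checked directly). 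Thus the admissible tuples form a two-dimensional space, so $\{I,f\}$ is a basis; as a sanity check, the arithmetic progression of common difference $-1$ recovers exactly the coefficient vector $\alpha_j=\tfrac{n+2-j}{2}$ of $f$.

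The only step demanding genuine care is the $\psi_k$-action table of the second paragraph, and within it the nonvanishing $Y_k\neq0$: if some $Y_k$ were to vanish, the corresponding relation would be vacuous and the dimension count would collapse. Once the table is secured, what remains is merely the solution of a constant-coefficient recurrence subject to one boundary condition, which I expect to be entirely routine.
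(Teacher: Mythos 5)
Your proof is correct, and it rests on the same underlying facts as the paper's (Lemma \ref{killvt} to see that $Y_k=\psi_kv_{T_{3,k}}$ is a basis vector and hence nonzero, Lemma \ref{commuting} and parts 1 and 2 of Lemma \ref{psi-actions} for the action table), but it is organised differently. You compute the full tridiagonal matrix of each $\psi_k$ on $\mathscr{D}$, turn $\End_\mathscr{H}(S_{(a,1^2)})$ into the solution space of the second-order recurrence $\alpha_k-2\alpha_{k+2}+\alpha_{k+4}=0$ with boundary condition $\alpha_{n-2}=2\alpha_n$, and count dimensions; your verification of the action table (including the case $j\geqslant k+6$, where the leftover $\Psi_{k+4}$ commutes through to kill $z$) is sound, and the dimension count $(r+1)-(r-1)=2$ is right, with the nondegeneracy of the boundary functional $(1-r)\alpha_5+r\alpha_7$ following from the coprimality of $r$ and $r-1$. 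The paper instead normalises: since $f(z)$ has coefficient $1$ on $v_{T_{3,n}}$, any $g$ may be adjusted by multiples of $I$ and $f$ so that $g(z)$ is supported on $v_{T_{3,5}},\dots,v_{T_{3,n-2}}$, and then the relations $\psi_{n-2k}g(z)=0$ peel off the top coefficient one at a time, using only that $\psi_k$ annihilates $v_{T_{3,j}}$ for $j\leqslant k-2$ and acts injectively on $v_{T_{3,k}}$. Your route costs more computation (the full $1,-2,1$ pattern) but buys an explicit description of the whole solution space and recovers the coefficients of $f$ as a by-product; the paper's is shorter because it never needs the off-diagonal coefficients. One small quibble: your appeal to $\Char\mathbb{F}\neq2$ is misplaced --- cancelling $Y_k$ requires only $Y_k\neq0$, and in fact your recurrence argument goes through in any characteristic --- but this does not affect correctness.
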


\begin{proof}
Suppose we have $g\in S_{(a,1^2)}\setminus\langle I,f\rangle_\mathbb{F}$. Since the coefficient of $v_{T_{3,n}}$ in $f$ is 1, we can add multiples of $I$ and $f$ to assume without loss of generality that \[g(z)=\sum_{j=2}^{(n-3)/2}\alpha_jv_{T_{3,2j+1}}=\sum_{j=2}^{(n-3)/2}\alpha_j\Psichaind{2j-1}{3}z.\]

We will show that applying the relations $\psi_{n-2k}g(z)=0$ for $k=1,2,\dots,(n-5)/2$ yields $\alpha_{(n-2k-1)/2}=0$. It then follows that $g$ is the zero map, a contradiction.

Suppose, by induction on $k$, we have \[g(z)=\sum_{j=2}^{(n-2k-1)/2}\alpha_j\Psichaind{2j-1}{3}z.\] Then, acting on $g(z)$ by $\psi_{n-2k}$ yields $\alpha_{(n-2k-1)/2}\psi_{n-2k}\Psichaind{n-2k-2}{3}z=0$ and we are done.
\end{proof}

In order to find idempotents, we would like to know how to compose elements of our basis. This amounts to the following lemma.

\begin{lem}\label{f2}
Let $a=2r+1$ and $b=2$. Then $f^2(z)=-(r+1)f(z)$.
\end{lem}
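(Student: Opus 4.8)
The plan is to use that, since $f$ is an $\mathscr{H}$-endomorphism, it commutes with the left action of $\mathscr{H}$ on $S_\lambda$; in particular $f\bigl(\Psichaind{3+2(r-c)}{3}z\bigr)=\Psichaind{3+2(r-c)}{3}f(z)$. Starting from the displayed formula $f(z)=\sum_{c=1}^r c\,\Psichaind{3+2(r-c)}{3}z$ recorded just before the statement, I would first pull the chains through $f$:
\[f^2(z)=f\Big(\sum_{c=1}^r c\,\Psichaind{3+2(r-c)}{3}z\Big)=\sum_{c=1}^r c\,\Psichaind{3+2(r-c)}{3}\,f(z),\]
so that the whole computation reduces to evaluating $\Psichaind{3+2(r-c)}{3}f(z)$ for each $c$.

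The key point is that when $b=2$ every element of $\mathscr{D}$ has normal form a single descending chain $\Psichaind{j_1}{3}z$, and since $i=b+1=3$ is forced here, the product of two such chains is governed entirely by the first three cases of Lemma \ref{triangularsimple} (the full triangularity of Proposition \ref{triangular} is not needed). Substituting $f(z)=\sum_{c'=1}^r c'\,\Psichaind{3+2(r-c')}{3}z$ and noting that $\Psichaind{3+2(r-c')}{3}z=v_{T_{3,j}}$ with $j=5+2(r-c')$, I apply Lemma \ref{triangularsimple} (with $j_d=3+2(r-c)$ and $i=b+1$) to each summand. Only two survive: the summand $c'=r$, for which $j=b+3=5$, contributes $-2\,\Psichaind{3+2(r-c)}{3}z$; the summand $c'=r-1$, for which $j=b+5=7$, contributes $+\Psichaind{3+2(r-c)}{3}z$; and every summand with $c'\leqslant r-2$, for which $j\geqslant b+7=9$, is annihilated.

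Collecting coefficients then gives
\[\Psichaind{3+2(r-c)}{3}f(z)=\bigl(r\cdot(-2)+(r-1)\bigr)\Psichaind{3+2(r-c)}{3}z=-(r+1)\,\Psichaind{3+2(r-c)}{3}z,\]
and hence $f^2(z)=-(r+1)\sum_{c=1}^r c\,\Psichaind{3+2(r-c)}{3}z=-(r+1)f(z)$, as required. Once Lemma \ref{triangularsimple} is in hand this is pure bookkeeping, so there is no genuine obstacle; the only point needing a moment's care is the smallest case $r=1$ (so $n=5$), where $f(z)=\Psi_3 z$ and the $j=7$ summand is simply absent, but this is harmless since its coefficient $r-1$ vanishes anyway and the identity $f^2(z)=-2f(z)=-(r+1)f(z)$ still holds.
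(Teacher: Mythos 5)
Your proof is correct and takes essentially the same route as the paper's: both reduce $f^2(z)$ via $\mathscr{H}$-linearity to computing $\Psichaind{3+2(r-c)}{3}f(z)$, then use the first three cases of Lemma \ref{triangularsimple} to see that only the $j=b+3$ and $j=b+5$ summands of $f(z)$ survive, contributing $-2r+(r-1)=-(r+1)$. Your explicit check of the degenerate case $r=1$ is a nice touch the paper omits.
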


\begin{proof}
Notice that $\Psi_3\cdot\Psichaind{3+2(r-c)}{3}z=0$ for all $c\leqslant r-2$. So

\begin{align*}
f^2(z)&=\sum_{c=1}^rc\cdot\Psichaind{3+2(r-c)}{3}f(z)\\
&=\sum_{c=1}^rc\cdot\Psichaind{3+2(r-c)}{3}(r-1\Psi_5\Psi_3z+r\Psi_3z)\\
&=\sum_{c=1}^rc\cdot\Psichaind{3+2(r-c)}{3}(-(r+1)z)\\
&=-(r+1)f(z).\qedhere
\end{align*}
\end{proof}

\begin{lem}
Suppose $a=2r+1$ and $\Char\mathbb{F}\mid\mfrac{n-1}{2}$. Then the only idempotents in $\End_\mathscr{H}(S_{(a,1^2)})$ are $0$ and $I$, and hence $S_{(a,1^2)}$ is indecomposable.
\end{lem}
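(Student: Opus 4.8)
The plan is to use the two preceding lemmas to reduce the problem to a short calculation inside the two-dimensional algebra $\End_\mathscr{H}(S_{(a,1^2)})$. By the lemma asserting that $\{I,f\}$ is a basis of this endomorphism algebra, every $g\in\End_\mathscr{H}(S_{(a,1^2)})$ can be written uniquely as $g=\alpha I+\beta f$ with $\alpha,\beta\in\mathbb{F}$. The crucial observation is arithmetical: since $n=a+b=(2r+1)+2=2r+3$, we have $\frac{n-1}{2}=r+1$, so the hypothesis $\Char\mathbb{F}\mid\frac{n-1}{2}$ says precisely that $r+1=0$ in $\mathbb{F}$. Feeding this into Lemma \ref{f2}, which gives $f^2=-(r+1)f$ as an identity of endomorphisms (both sides agree on the generator $z$, and $S_\lambda$ is cyclic, so they agree everywhere), I would conclude that $f^2=0$; that is, $f$ is square-zero.

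Next I would impose idempotency. As $f$ commutes with $I$ and with itself, $g^2=\alpha^2 I+2\alpha\beta f+\beta^2 f^2=\alpha^2 I+2\alpha\beta f$, using $f^2=0$. Comparing coefficients with $g=\alpha I+\beta f$ in the basis $\{I,f\}$, the equation $g^2=g$ is equivalent to the pair of scalar equations $\alpha^2=\alpha$ and $2\alpha\beta=\beta$. The first forces $\alpha\in\{0,1\}$. In either case the second reads $\beta(2\alpha-1)=0$ with $2\alpha-1\in\{-1,1\}$, both nonzero in any field, so $\beta=0$. Hence $g=0$ or $g=I$, and these are the only idempotents of $\End_\mathscr{H}(S_{(a,1^2)})$.

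Finally I would invoke the standard fact that a nonzero module is indecomposable if and only if its endomorphism ring contains no idempotents other than $0$ and the identity, which immediately yields the indecomposability of $S_{(a,1^2)}$. I do not anticipate any genuine obstacle here: all the substance sits in the observation that the congruence hypothesis makes $f$ nilpotent (indeed square-zero), after which the idempotent condition collapses to the two elementary scalar equations above. The only points meriting a word of care are that $f^2=-(r+1)f$ is an equality of endomorphisms rather than merely of images of $z$ (justified by cyclicity of $S_\lambda$), and that $2\alpha-1$ is invertible for $\alpha\in\{0,1\}$, which holds over any field.
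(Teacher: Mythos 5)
Your proposal is correct and follows essentially the same route as the paper: use the basis $\{I,f\}$, apply Lemma \ref{f2} together with the hypothesis $\Char\mathbb{F}\mid\frac{n-1}{2}=r+1$ to get $f^2=0$, and then solve $\alpha^2=\alpha$, $2\alpha\beta=\beta$ to force $\beta=0$. Your explicit remarks on cyclicity (to upgrade $f^2(z)=-(r+1)f(z)$ to an identity of endomorphisms) and on the invertibility of $2\alpha-1$ are sensible touches the paper leaves implicit.
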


\begin{proof}
Let $\alpha,\beta\in\mathbb{F}$. Using Lemma \ref{f2}, we have $f^2(z)=0$ and therefore \[(\alpha I+\beta f)^2=\alpha^2I+2\alpha\beta f.\]

So $\alpha I+\beta f$ is an idempotent if and only $\alpha^2=\alpha$ and $2\alpha\beta=\beta$.

Whether $\alpha=0$ or $\alpha=1$, we must have $\beta=0$. The result follows.
\end{proof}

With the aid of Murphy's result in \cite{gm}, we have now completely determined decomposability of the Specht modules $S_{(a,1^b)}$. We summarise our result in the following theorem.

\begin{thm}
Suppose $\Char\mathbb{F}\neq 2$. Then $S_{(a,1^b)}$ is indecomposable if and only if $n$ is even, or $b=2\text{ or }3$ and $\Char\mathbb{F}|\lceil \frac{a}{2}\rceil$.
\end{thm}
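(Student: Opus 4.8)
The plan is to read off the complete classification from the case-by-case work of the preceding sections, organising the argument by the parity of $n$ and reducing the odd case to the single configuration treated in Sections 5 and 6. First I would settle the even case: if $n$ is even then $a$ and $b$ share a parity, so one of Theorems \ref{a/b evens} and \ref{a/b odds} applies, $\End_\mathscr{H}(S_\lambda)$ is one-dimensional, and hence $S_\lambda$ is indecomposable. Since the right-hand condition of the asserted equivalence is automatically true when $n$ is even, this disposes of the biconditional for all such $\lambda$.

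Now suppose $n$ is odd, so exactly one of $a,b$ is odd. Throughout I adopt the standing convention of Section 5 that $b<n/2$; this is harmless by Theorem \ref{conjugate}, because $(a,1^b)'=(b+1,1^{a-1})$ interchanges arm and leg, fixes $n$, and preserves which limb has odd length. If $a$ is even and $b$ is odd I would then apply Theorem \ref{branching} to the pair $(a-1,1^{b-1})$, whose arm $a-1$ is odd and leg $b-1$ is even: it gives that $S_{(a,1^b)}$ is decomposable if and only if $S_{(a-1,1^{b-1})}$ is, reducing me to the case $a$ odd, $b$ even (with $n$ replaced by the still-odd $n-2$). Hence it suffices to prove the equivalence when $a$ is odd and $b$ is even, where the asserted indecomposability condition reads simply ``$b=2$ and $\Char\mathbb{F}\mid\lceil a/2\rceil$''.

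In this reduced regime I would invoke the endomorphism $f$ of Section 6. If $b\geqslant4$, Proposition \ref{eigenvalues} produces the eigenvalues $0$, $-\tfrac{n-1}{2}$ and $-(n-3)$ of $f$, and these cannot all coincide when $\Char\mathbb{F}\neq2$ (equality would force $\Char\mathbb{F}\mid 2$); thus $f$ has at least two distinct eigenvalues, $S_\lambda$ decomposes as the direct sum of the associated generalised eigenspaces, and is decomposable. If $b=2$ and $\Char\mathbb{F}\nmid\tfrac{n-1}{2}$, the eigenvalues $0$ and $-\tfrac{n-1}{2}$ are already distinct and the same argument applies. If instead $b=2$ and $\Char\mathbb{F}\mid\tfrac{n-1}{2}$, then writing $a=2r+1$ we have $\tfrac{n-1}{2}=r+1=0$ in $\mathbb{F}$, so Lemma \ref{f2} yields $f^2=-(r+1)f=0$; since $\{I,f\}$ spans $\End_\mathscr{H}(S_\lambda)$, a short computation with $(\alpha I+\beta f)^2=\alpha^2I+2\alpha\beta f$ shows, using $\Char\mathbb{F}\neq2$, that the only idempotents are $0$ and $I$, so $S_\lambda$ is indecomposable. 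Recording the identity $\tfrac{n-1}{2}=\tfrac{a+1}{2}=\lceil a/2\rceil$ valid for odd $a$, indecomposability in the reduced regime occurs exactly when $b=2$ and $\Char\mathbb{F}\mid\lceil a/2\rceil$.

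It remains to transport this conclusion back along the two reductions. These preserve the hypothesis that $n$ is odd, and for the branching step from $a$ even, $b=3$ down to $a-1$ odd, $b=2$ one checks $\lceil(a-1)/2\rceil=a/2=\lceil a/2\rceil$, so the divisibility condition is unchanged. Consequently the lone indecomposable family $b=2$ of the reduced regime unfolds into precisely the two families $b=2$ (arising from $a$ odd) and $b=3$ (arising from $a$ even) of the statement, every remaining odd-$n$ hook being decomposable. The main obstacle is the bookkeeping rather than any new computation: one must verify that parities, the normalisation $b<n/2$, and the divisibility condition on $\lceil a/2\rceil$ all transform consistently under Theorems \ref{conjugate} and \ref{branching}, so that the three possible outcomes of the reduced analysis collapse to the single clean condition ``$b=2$ or $3$ and $\Char\mathbb{F}\mid\lceil a/2\rceil$''.
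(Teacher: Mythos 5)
Your assembly of the even case and of the normalised odd case ($a$ odd, $b$ even, $b<n/2$) matches what the paper intends, and your branching bookkeeping $\lceil(a-1)/2\rceil=a/2=\lceil a/2\rceil$ for $a$ even is correct. The genuine gap is in the final transport-back step: you assert, without checking, that the normalisation $b<n/2$ via Theorem \ref{conjugate} transforms the stated condition into itself, and it does not. Conjugation sends $(a,1^b)$ to $(b+1,1^{a-1})$, so for hooks with $b>n/2$ the criterion established in the reduced regime pulls back to ``$a-1\in\{2,3\}$ and $\Char\mathbb{F}\mid\lceil(b+1)/2\rceil$'' (together with $a=2$, whose conjugate $(b+1,1)$ is $2$-regular), and this is not of the form ``$b\in\{2,3\}$ and $\Char\mathbb{F}\mid\lceil a/2\rceil$''. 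Concretely: $S_{(2,1^5)}$ has $n=7$ odd and $b=5\notin\{2,3\}$, so the statement declares it decomposable, yet its conjugate $(6,1)$ is $2$-regular, so $S_{(6,1)}$ is indecomposable and hence, by Theorem \ref{conjugate}, so is $S_{(2,1^5)}$. Likewise $S_{(3,1^4)}$ in characteristic $3$: the statement says decomposable, but the conjugate is $(5,1^2)$ with $\Char\mathbb{F}=3$ dividing $\lceil 5/2\rceil=(n-1)/2$, which the paper's own idempotent lemma shows to be indecomposable.

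So the three outcomes of the reduced analysis do not ``collapse to the single clean condition'': the long-legged hooks contribute an extra indecomposable family, and the theorem needs the additional clause ``or $a=2$, or $a\in\{3,4\}$ and $\Char\mathbb{F}\mid\lceil(b+1)/2\rceil$'' (the conjugate of the displayed condition). The paper offers no argument for this theorem beyond ``we summarise our result'', and the defect is really in the statement rather than only in your write-up; but as a proof of the statement as literally given, your argument fails at precisely the step where you wave the conjugation through, and no correct proof of that literal statement exists because Theorem \ref{conjugate} contradicts it.
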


\end{document}